\setlist[enumerate]{nosep}
\newcommand\redsout{\bgroup\markoverwith{\textcolor{red}{\rule[0.5ex]{2pt}{0.8pt}}}\ULon}
\newtheorem{theorem}{Theorem}[section]
\newtheorem*{theorem*}{Theorem}
\newtheorem{lemma}[theorem]{Lemma}
\newtheorem*{lemma*}{Lemma}
\newtheorem{corollary}[theorem]{Corollary}
\newtheorem{proposition}[theorem]{Proposition}
\newtheorem{remark}[theorem]{Remark}
\newtheorem{definition}[theorem]{Definition}
\newtheorem*{definition*}{Definition}
\newtheorem*{definitions*}{Definitions}
\newtheorem*{question*}{Question}
\newtheorem*{questions*}{Questions}
\newtheorem{example}[theorem]{Example}
\newcommand{\cB}{\mathcal B}
\newcommand{\cC}{\mathcal C}
\newcommand{\cF}{\mathcal F}
\newcommand{\cG}{\mathcal G}
\newcommand{\cH}{\mathcal H}
\newcommand{\cK}{\mathcal K}
\newcommand{\cO}{\mathcal O}
\newcommand{\cR}{\mathcal R}
\newcommand{\cS}{\mathcal S}
\newcommand{\cT}{\mathcal T}
\newcommand{\cU}{\mathcal U}
\newcommand{\cZ}{\mathcal Z}
\def\Cz{\mathbb{C}}
\def\Nz{\mathbb{N}}
\def\Qz{\mathbb{Q}}
\def\Rz{\mathbb{R}}
\def\Tz{\mathbb{T}}
\def\Zz{\mathbb{Z}}
\def\1z{\mathbb{1}}
\def\SEMI{\mbox{$\times\kern-2pt\vrule height5pt width.6pt \kern3pt $}}
\newcommand{\Aut}{{\rm Aut}\,}
\newcommand{\Tr}{{\rm Tr\,}}
\newcommand{\Sp}{{\rm Sp\,}}
\newcommand{\Ad}{{\rm Ad\,}}
\newcommand{\ugr}{\overline{{\rm gr}}\,}
\newcommand{\lgr}{\underline{{\rm gr}}\,}
\newcommand{\br}{{\rm br}\,}
\newcommand{\G}{\mathcal{G}}
\renewcommand{\Re}{\textup{Re}}
\newcommand{\E}{\mathcal{E}}
\newcommand{\T}{\mathcal{T}}
\newcommand{\spn}{\textup{span}}
\renewcommand{\phi}{\varphi}
\newcommand{\QI}{\textup{QI}}
\newcommand{\KMS}{\textup{KMS}}
\newcommand{\Gr}{\textup{Gr}}
\renewcommand{\inf}{\textup{inf}}
\newcommand{\pmc}{\pi_{\mathrm{mc}}} %minimal component
\newcommand{\harm}{\chi} %notation of harmonic vector 
\newcommand{\bullb}{\textup{\textcolor{blue}{\textbullet}}}
\newcommand{\bullg}{\textup{\textcolor{green}{\textbullet}}}
\newcommand{\bullr}{\textup{\textcolor{red}{\textbullet}}}
\newcommand{\bullo}{\textup{\textcolor{orange}{\textbullet}}}
\begin{document}

\title{C*-dynamical invariants and Toeplitz algebras of graphs}

\thispagestyle{fancy}

 \author{Chris Bruce}\thanks{C. Bruce is supported by a Banting Postdoctoral Fellowship from the Natural Sciences and Engineering Research Council of Canada (NSERC).
 This project has received funding from the European Research Council (ERC) under the European Union's Horizon 2020 research and innovation programme (grant agreement No. 817597).}
 
\address[Chris Bruce]{School of Mathematical Sciences, Queen Mary University of London, Mile End Road, E1 4NS London, United Kingdom, and
	School of Mathematics and Statistics, University of Glasgow, University Place, Glasgow G12 8QQ, United Kingdom}
\email[Bruce]{Chris.Bruce@glasgow.ac.uk}

 \author{Takuya Takeishi}\thanks{
T. Takeishi is supported by JSPS KAKENHI Grant Number JP19K14551 (2019-2023).\footnotemark[2]
 }
 \address[Takuya Takeishi]{
  Faculty of Arts and Sciences\\
  Kyoto Institute of Technology\\
  Matsugasaki, Sakyo-ku, Kyoto\\
  Japan}
 \email[Takeishi]{takeishi@kit.ac.jp}
 
\date{\today}

\subjclass[2010]{Primary 46L30, 46L55; Secondary 82B10.}

\maketitle

\begin{abstract}
In recent joint work of the authors with Laca, we precisely formulated the notion of partition function in the context of C*-dynamical systems. 
Here, we compute the partition functions of C*-dynamical systems arising from Toeplitz algebras of graphs, and we explicitly recover graph-theoretic information in terms of C*-dynamical invariants. In addition, we compute the type for KMS states on C*-algebras of finite (reducible) graphs and prove that the extremal KMS states at critical inverse temperatures give rise to type III$_\lambda$ factors.
Our starting point is an independent result parameterising the partition functions of a certain class of C*-dynamical systems arising from groupoid C*-algebras in terms of $\beta$-summable orbits.
\end{abstract}

\setlength{\parindent}{0cm} \setlength{\parskip}{0.3cm}

\section{Introduction}

\subsection{Context and motivation}
A C*-algebra equipped with a time evolution (that is, a point-norm continuous one-parameter automorphism group) is called a C*-dynamical system; these systems provide a general context for studying equilibrium. Many interesting classes of C*-algebras come with canonical time evolutions, and the associated C*-dynamical systems often encode a wealth of interesting information. The analysis of Kubo--Martin--Schwinger (KMS) states of a C*-dynamical system associated with algebraic, combinatorial, or number-theoretic input often reveals subtle and surprising phenomena, intimately connected with the initial data.

Each directed graph $E$ gives rise to the Toeplitz algebra $\cT C^*(E)$ \cite{FR} and the graph C*-algebra $C^*(E)$ \cite{FLR,KPRR}; the former is an extension of the latter in a canonical way. Both these C*-algebras carry canonical actions of the circle which can be lifted to time evolutions $\alpha$ and $\bar{\alpha}$, respectively, and thus give rise to C*-dynamical systems $(\T C^*(E),\alpha)$ and $(C^*(E),\bar{\alpha})$. The study of KMS states on Cuntz--Krieger algebras of finite matrices was initiated by Enomoto, Fujii, and Watatani \cite{EFW}, generalising earlier work by Olesen and Pedersen on Cuntz algebras \cite{OP}. Since then, there have been many works on KMS states for graph algebras and their Toeplitz extensions, see, for instance, \cite{aHLRS13,aHLRS15,ExLa,CarLar,Th14} and the references therein. 
The KMS-structure of the system $(\T C^*(E),\alpha)$ is typically richer than that of $(C^*(E),\bar{\alpha})$: Any KMS state of  $(C^*(E),\bar{\alpha})$ defines a KMS state of $(\T C^*(E),\alpha)$ via pull-back along the quotient map. However, every extremal KMS state of $(\T C^*(E),\alpha)$ that does not factor through $(C^*(E),\bar{\alpha})$ is necessarily of type I (cf. Section~\ref{sec:toeplitz}), so one might be inclined to think that all interesting KMS-behaviour occurs at the level of graph C*-algebras rather than their Toeplitz extensions. However, as we shall see, the presence of many extremal KMS states that are of type I means that the C*-dynamical system of $(\T C^*(E),\alpha)$ typically has many more partition functions than $(C^*(E),\bar{\alpha})$. 
We shall exploit the multitude of partition functions to extract combinatorial information about $E$ from $(\T C^*(E),\alpha)$ that is not available from $(C^*(E),\bar{\alpha})$.

KMS states on Toeplitz algebras of finite graphs have been studied by an Huef, Laca, Raeburn, and Sims \cite{aHLRS13,aHLRS15}, where an explicit parameterisation of all KMS states is given.
The case of infinite graphs has been dealt with by Carlsen and Larsen \cite{CarLar}, but things are more complicated in the infinite case, and an explicit parameterisation is only possible in some instances.

Our motivation stems from the following natural problem:
\begin{center}
Given a countable directed graph $E$, how much information about $E$ can be recovered from the C*-dynamical system $(\T C^*(E),\alpha)$? 
\end{center}

It is known that one cannot completely recover $E$ from $(\T C^*(E),\alpha)$, even when $E$ is finite, see \cite[Example~2.1]{BLRS}. However, it is not known exactly how much information is contained in $(\T C^*(E),\alpha)$.

\subsection{Approach}
A C*-dynamical system $(A,\sigma)$ may have no type I KMS states, but typically C*-dynamical systems of C*-algebras of ``Toeplitz type'' have many low-temperature KMS factor states that are of type I. The partition functions of $(A,\sigma)$ are attached to the (quasi-equivalence classes of) extremal KMS states that are of type I. Admissible triples, a notion essentially due to Connes, Consani, and Marcolli \cite{CCM}, play a fundamental role in the definition of partition function in \cite{BLT}, and finding suitable admissible triples is a key step towards computing the partition functions of a C*-dynamical system (see \cite{BLT} and also Section~\ref{sec:pf&gpoidsPrelims} below for the definitions). We refer the reader to \cite{BLT} for details on the history leading up to the conception of partition function in the setting of C*-dynamical systems. 

One fruitful avenue for recovering information from a C*-dynamical system is through the study of its partition functions. 
This approach has been particularly successful for C*-dynamical systems arising from number-theoretic considerations. For instance, the Riemann zeta function appears as the partition function of the Bost--Connes system for $\Qz$ \cite{BC}, and the Dedekind zeta function of a number field $K$ appears as the partition function of the Bost--Connes type system associated with $K$ \cite{LLN,CoMar}. The C*-dynamical systems arising from actions of congruence monoids on rings of algebraic integers \cite{Bru1,Bru2} exhibit new and interesting phenomena \cite{BLT}; indeed, these systems usually have infinitely many distinct partition functions (this phenomenon was foreshadowed already in the work of Cuntz, Deninger, and Laca \cite{CDL}). A careful analysis of these partition functions revealed that these systems contain fine class field theoretic information \cite{BLT}.

Inspired by the successes for C*-dynamical systems of number-theoretic origin, we study partition functions and admissible triples of the C*-dynamical system $(\T C^*(E),\alpha)$, and we use these to recover combinatorial information about $E$ explicitly in terms of C*-dynamical invariants. 
It is well-known that the system $(\T C^*(E),\alpha)$ has a canonical groupoid model, that is, there is an \'{e}tale groupoid $\G_E$ and a continuous $\Rz$-valued 1-cocycle $c$ on $\G_E$ such that there is a canonical isomorphism of C*-dynamical systems $(\T C^*(E),\alpha)\cong (C^*(\G_E),\sigma^c)$, where $\sigma^c$ is the time evolution on the groupoid C*-algebra $C^*(\G_E)$ determined by $c$ (see Sections~\ref{sec:pf&gpoidsPrelims} for details).
We thus begin by considering the following general problem:
\begin{center}
	What are the partition functions of the C*-dynamical system $(C^*(\G),\sigma^c)$?\\
	(Here, $\G$ is an \'{e}tale groupoid and $c$ is a continuous $\Rz$-valued 1-cocycle.) 
\end{center}

\subsection{Results and organisation of the paper}

\subsubsection{General results on C*-dynamical systems arising from groupoids}
\vspace{-0.5cm}
In the preliminary Section~\ref{sec:pf&gpoidsPrelims}, we recall the notion of partition function from \cite{BLT} (Definition~\ref{def:partitionfnc}), general results of Kumjian--Renault on KMS states and quasi-invariant measures for groupoids and their C*-algebras \cite{KumRen} (Proposition~\ref{prop:KumRen}), and the notion of $\beta$-summable orbit from \cite{Th12} (see Section~\ref{sec:betasummable}). Our first results appear in Section~\ref{sec:pfncs&gpoids} and give a complete parameterisation and explicit description of the partition functions of $(C^*(\G),\sigma^c)$ in the case that $c^{-1}(0)$ is a principal groupoid (see Proposition~\ref{prop:main} and Theorem~\ref{thm:irredrep}). We point out that this general setting includes several natural example classes in addition to the Toeplitz algebras of graphs we consider here, for example, Bost--Connes type systems \cite{LLN}, semigroup C*-algebras of right-angled Artin monoids \cite{BruLRS}, or the class of groupoid C*-algebras from dynamics studied in \cite[Section~3]{KumRen}.

\subsubsection{Type I KMS factor states and partition functions for Toeplitz algebras of graphs}
\vspace{-0.4cm}

In Section~\ref{ssec:toeplitz}, we apply our general results to systems of the form $(\T C^*(E),\alpha)$, and we obtain a parameterisation of the partition functions of $(\T C^*(E),\alpha)$ in terms of $\beta$-summable orbits in the space $E^{\leq\infty}$ of all paths (both finite and infinite) in $E$ (Theorem~\ref{thm:Toeplitz}). For $\beta>0$, the $\beta$-summable orbits are parameterised by the set $E^0_{\beta\textup{-reg}}$ of $\beta$-regular vertices of $E$ (a notion introduced by Carlsen and Larsen in \cite{CarLar}), and the partition function $Z_v(s)$ associated with a $\beta$-regular vertex $v$ is the so-called ``fixed-target partition function'' (cf. \cite{ExLa}), which is, by definition, the generating function for finite paths in $E$ that terminate at $v$.
We prove that the function $E^0\times (0,\infty)\to [1,\infty]$ given by $(v,\beta)\mapsto Z_v(\beta)$ governs certain ``cooling'' behaviour of the system $(\T C^*(E),\alpha)$ in the sense that if $\beta>0$ is such that $\sup_{v\in E^0} Z_v(\beta)<\infty$, then every KMS$_\beta$ state of $(\T C^*(E),\alpha)$ is of type I (Theorem~\ref{thm:KMSwithgrowthcond}). 
For finite graphs, we show that the condition $\sup_{v\in E^0} Z_v(\beta)<\infty$ is equivalent to $\beta>\log\rho(A_E)$, where $\rho(A_E)$ is the spectral radius of adjacency matrix $A_E$ of $E$, so we recover the results on low-temperature KMS states from \cite{aHLRS13} (cf. Proposition~\ref{prop:tree} and Remark~\ref{rmk:lowtemppar}). For infinite graphs, we expound an example from \cite{CarLar} to demonstrates that even if every vertex is $\beta$-regular, it may be that there are KMS states that are not of type I (Example~\ref{exm:CarLarExample}). 

\subsubsection{Admissible triples from ground states and graph reconstruction}
\vspace{-0.4cm}

In \cite{CarLar}, it is proven that the extremal ground stats of $(\T C^*(E),\alpha^E)$ are in canonical bijection with the set $E^0$ of vertices in $E$. We compute in Proposition~\ref{prop:groundGNS} the admissible triples of the ground states of $(\T C^*(E),\alpha^E)$ and use them to characterise the numbers $|E^nv|$ in terms of C*-dynamical invariants of $(\T C^*(E),\alpha)$, where $E^nv$ denotes the set of paths in $E$ that terminate at $v$. This allows us to recover combinatorial information about $E$ from the C*-dynamical system $(\T C^*(E),\alpha^E)$. As a consequence, we prove that if two systems $(\T C^*(E),\alpha^E)$ and $(\T C^*(F),\alpha^F)$ are isomorphic, then there is a bijection $E^0\to F^0$, $v\mapsto v'$, such that $|E^nv|=|F^nv'|$ for all $n\geq 0$ (Theorem~\ref{thm:general}).

As an application, we show that if, in addition to the time evolution, we keep track of the canonical vertex algebra $M_E$ in $\T C^*(E)$, then we can completely and explicitly reconstruct $E$ from the data $(\T C^*(E),\alpha,M_E)$ (Theorem~\ref{thm:recon}). Thus, if there is an isomorphism $(\T C^*(E),\alpha^E)\cong(\T C^*(F),\alpha^F)$ that preserves the canonical vertex algebras, then we must have $E\cong F$ (Corollary~\ref{cor:recon}). This generalises a reconstruction theorem from \cite{BLRS} to the case of infinite graphs. This generalisation has also been obtained in \cite{DEG} using completely different techniques.

\subsubsection{Geometric description of critical inverse temperatures}
\vspace{-0.4cm}

If $v$ is $\beta$-regular for some $\beta>0$, then the abscissa of convergence $\beta_v$ of the partition function $Z_v(\beta)$ lies in $\{-\infty\}\cup[0,\infty)$, with $\beta_v=-\infty$ occurring only in degenerate cases. We call such numbers the \emph{critical inverse temperatures} of the C*-dynamical system $(\T C^*(E),\alpha)$. In Section~\ref{ssec:geom}, we give a geometric description of these critical inverse temperatures. A key innovation is to attach to each vertex $v$ a rooted tree $T(E,v)$, called the directed cover of $E$ based at $v$ (Definition~\ref{def:dircover}), which is defined by resolving all cycles at $v$; it is the universal cover based at $v$ and is analogous to the universal covering space from topology.
If $v\in E^0_{\beta\text{-reg}}$ for some $\beta>0$, then $\beta_v$ is equal to the logarithm of the upper growth rate of the tree $T(E,v)$ (Proposition~\ref{prop:betac=dim}). For finite graphs, the logarithm of the upper growth rate is equal to the Hausdorff dimension of the boundary of $T(E,v)$, and this geometric description makes several properties of the critical inverse temperatures transparent.
In the case of finite graphs, the critical inverse temperatures have an interpretation in terms of the spectral radii of adjacency matrices associated with certain strongly connected components in the graph, see \cite[Corollary~5.8~\&~Lemma~7.2]{aHLRS15}, and we explain how the results in \cite{aHLRS15} are related to our geometric description (cf. Proposition~\ref{prop:tree}).

\subsubsection{Type computations for Toeplitz algebras of finite (reducible) graphs}
\vspace{-0.4cm}

For a finite graph $E$, the KMS states of $(\T C^*(E),\alpha)$ are completely parameterised in \cite{aHLRS15}. We complete the analysis of the KMS-structure of $(\T C^*(E),\alpha)$, in the case that $E$ is finite, by computing the type of all extremal KMS states of $(\T C^*(E),\alpha)$.
We prove that the extremal KMS states are either type I or type III. 
Using modifications of the results from \cite{aHLRS15}, we prove in Section~\ref{sec:finitegraphs&type} that for each $\beta>0$, the extremal KMS states that are not of type I are completely parametrized by minimal (strongly connected) components (see Theorem~\ref{thm:aHLRSmain2}; the appropriate notion of minimality is given in Definition~\ref{def:min}). In order to compute the type of the KMS state associated with the minimal strongly connected component $C$, we reduce the problem to working with the graph C*-algebra $C^*(E_C)$, where $E_C$ is the subgraph associated with the strongly connected component $C$. 

At this point, we can apply results \cite{LLNSW} to obtain that $\psi_C$ is of type III$_\lambda$, where $\lambda=e^{-s\beta_v}$, $v$ is any vertex in $C$, and $s$ is the greatest common divisor of lengths of nontrivial cycles in $E_C$ (Theorem~\ref{thm:typedetermination}).

\subsubsection{Example classes}
\vspace{-0.4cm}

Section~\ref{sec:examples} contains our analysis of several example classes. For a finite graph $E$, we use our general results to explicitly express the generating function for finite paths in $E$ in terms of C*-dynamical invariants of $(\T C^*(E),\alpha)$ (Proposition~\ref{prop:genfnc}). We then examine two specific examples of finite graphs were we concretely compute the partition functions and the type of the extremal KMS states (Examples~\ref{ex:finite1} and \ref{ex:finite2}).

In the case of infinite graphs whose vertex sets come in levels in the appropriate sense (for example, Bratteli diagrams), we show that our conclusions can be strengthened: If the systems $(\T C^*(E),\alpha^E)$ and $(\T C^*(F),\alpha^F)$ are isomorphic, then the bijection between vertex sets coming from our general result must preserve levels (Proposition~\ref{prop:BD}). 

In general, there may be a stark contrast between the amount of information about $E$ contained in the C*-algebra $\T C^*(E)$ and amount of information about $E$ contained in C*-dynamical system $(\T C^*(E),\alpha^E)$. We demonstrate this be considering opposite graphs of Bratteli diagrams (Section~\ref{sec:BDop}).

\subsubsection{$\beta$-summability for tail equivalence classes of infinite paths}
\vspace{-0.4cm}

For $\beta<0$, the $\beta$-summable orbits are either finite or arise from certain infinite paths. In Section~\ref{sec:negbeta}, we consider the problem of $\beta$-summability of infinite paths for $\beta<0$. This case is more complicated than the case of positive $\beta$. For a certain class of row-finite graphs without nonrtivial cycles, we are able to use a result by Thomsen to prove existence of an infinite-dimensional KMS state whose associated conformal measure has support containing the orbit of a special path in the graph (Proposition~\ref{prop:KMSexistence}). In general, this KMS state need not be unique, nor must it be of type I. We then give a characterisation of $\beta$-summability of infinite paths that involves a condition on the speed at which certain paths return (Theorem~\ref{thm:summable}), and we illustrate this criteria and our general construction of a KMS state by examining concrete examples (see Example~\ref{exm:negative} and Example~\ref{exm:negative2}).

\subsection*{Acknowledgements}
\vspace{-0.4cm}

We would like to thank Sergey Neshveyev and Yusuke Isono for several helpful comments about type computations. Part of this work was carried out while C. Bruce was visiting the Department of Mathematics and Statistics at the University of Victoria, and he would like to thank the department for its hospitality; he would also like to thank Marcelo Laca for several helpful comments.

\section{Preliminaries}
\label{sec:pf&gpoidsPrelims}
\subsection{Partition functions}
Let $A$ be a separable C*-algebra. A \emph{time evolution} on $A$ is a group homomorphism $\sigma\colon\Rz\to\Aut(A)$ such that for each fixed $a\in A$, the map $t\mapsto\sigma_t(a)$ is continuous; the pair $(A,\sigma)$ is called a \emph{C*-dynamical system}. The standard notion of equilibrium in this setting is given by KMS states.
For $\beta\in \Rz^*:=\Rz\setminus\{0\}$, a state $\varphi$ on $A$ is an \emph{KMS$_\beta$ state of $(A,\sigma)$}, or a \emph{$\sigma$-KMS$_\beta$ state on $A$} if 
\[
\varphi(ab)=\varphi(b\sigma_{i\beta}(a)) \quad \text{for all $\sigma$-analytic elements $a,b\in A$},
\]
see \cite[Section~5.3]{BR} and \cite[Section~8.12.2]{Ped}. A \emph{$\sigma$-KMS$_0$} state is defined to be a $\sigma$-invariant tracial state on $A$. Note that this definition of $\sigma$-KMS$_0$ state follows \cite[Section~8.12.2]{Ped}, but differs from that given in \cite[Section~5.3]{BR}. A state $\varphi$ on $A$ is a \emph{$\sigma$-ground state} if for all $\sigma$-analytic elements $a,b\in A$, the map
\[
z\mapsto \varphi(a\sigma_z(b))
\]
is bounded on the upper half-plane. We refer the reader to \cite{BR} and \cite{Ped} for details on KMS states. 

Given a C*-dynamical system $(A,\sigma)$ and $\beta\in\Rz$, we let $\KMS_\beta(A,\sigma)$ and $\Gr(A,\sigma)$ denote the closed, convex subsets of the state space of $A$ that consist of the $\sigma$-KMS$_\beta$ and $\sigma$-ground states on $A$, respectively (cf. \cite[Proposition~5.3.23]{BR}). We let $\E(\KMS_\beta(A,\sigma))$ and $\E(\Gr(A,\sigma))$ denote the sets of extreme points in $\KMS_\beta(A,\sigma)$ and $\Gr(A,\sigma)$, respectively. Given a state $\varphi$ of a C*-algebra $A$, we let $(\pi_{\phi},\cH_{\phi},\xi_{\phi})$ denote the GNS-triple associated with $\varphi$. Recall that $\phi$ is a \emph{factor state} if the von Neumann algebra $\pi_\phi(A)''$ is a factor, and the \emph{type} of the factor state $\phi$ is the type of the von Neumann algebra $\pi_\phi(A)''$.

Let $\beta\in\Rz^*$. By Lemma~\ref{lem:extremal}(i), a state $\phi\in \KMS_\beta(A,\sigma)$ is extremal if and only if it is a factor state. This leads to a useful stratification of the set $\E(\KMS_\beta(A,\sigma))$ of extreme points of $\KMS_\beta(A,\sigma)$ according to type, and we let $\E_{\rm I}(\KMS_\beta(A,\sigma))$ denote the subset of $\KMS_\beta(A,\sigma)$ consisting of extremal $\sigma$-KMS$_\beta$ states that are of type I. 

Two C*-dynamical systems $(A,\sigma^A)$ and $(B,\sigma^B)$ are \emph{isomorphic} if there exists a *-isomorphism $\theta\colon A\overset{\cong}{\to} B$ such that $\theta\circ\sigma^A_t=\sigma^B_t\circ\theta$ for all $t\in\Rz$. 
By a \emph{C*-dynamical invariant}, we mean an invariant of the system $(A,\sigma)$ that depends on $\sigma$. Typical examples include the space of extremal KMS$_\beta$ states of a particular type, the space of ground states, and the collection of partition functions associated with the extremal KMS states that are of type I.

We shall need to extend several definitions and results from \cite{BLT} to the non-unital case; we collect these reformulations here for convenience.

\begin{definition}[cf. {\cite[Section~2]{BLT}}]
	\label{def:adtriple}
Let $(A,\sigma)$ be a C*-dynamical system with $A$ separable, let $\beta_0\in(0,\infty)$, and suppose that $\varphi$ is an extremal $\sigma$-KMS$_{\beta_0}$ state on $A$ of type I. An \emph{admissible triple $(\pi,\cH,\rho)$ for $\phi$ (with respect to $\sigma$)} consists of a separable Hilbert space $\cH$, an irreducible representation $\pi\colon A\to\cB(\cH)$, and a positive trace-class operator $\rho\in\cB(\cH)$ of norm $1$ such that 
\[
\varphi(a)=\frac{\Tr(\pi(a)\rho)}{\Tr(\rho)}\quad\text{for all } a\in A.
\]
\end{definition}

\begin{definition}
	\label{def:ground&triples}
Let $(A,\sigma)$ be a C*-dynamical system with $A$ separable, and suppose $\phi$ is an extremal $\sigma$-ground state on $A$ with associated GNS-triple $(\pi_{\phi},\cH_{\phi},\xi_{\phi})$. By Lemma~\ref{lem:extremal}, $\phi$ is a pure state.  By \cite[Proposition~5.3.19(5)]{BR}, $\phi$ is $\sigma$-invariant, so that there is a unique one-parameter unitary group $\{U_t^{\phi}\}_{t\in\Rz}$ on $\cH_{\phi}$ such that $U_t^{\phi}\pi_{\phi}(a)U_{-t}^{\phi}=\pi_{\phi}(\sigma_t(a))$ for all $a\in A$ and $t\in\Rz$. By \cite[Proposition~5.3.19(5)]{BR}, there is a unique generator $H_\phi$ of the unitary group $\{U_t^{\phi}\}_{t\in\Rz}$ such that $H_\phi\geq 0$ and $0\in\Sp H_\phi$. We call $(\pi_\phi,\cH_\phi,e^{-H_\phi})$ the \emph{admissible triple} for $\phi$.
\end{definition}

\begin{proposition}[{\cite[Proposition~2.1]{BLT}}]
\label{prop:adTripleUnique}
Let $(A,\sigma)$ be a C*-dynamical system with $A$ separable, let $\beta_0\in(0,\infty)$, and suppose that $\varphi$ is an extremal $\sigma$-KMS$_{\beta_0}$ state on $A$ of type I. Then there exists an admissible triple $(\pi,\cH,\rho)$ for $\phi$; moreover, this triple is unique up to unitary equivalence in the sense that if $(\pi',\cH',\rho')$ is another admissible triple for $\phi$, then there exists a unitary $\cU\colon \cH\overset{\cong}{\to}\cH'$ such that $\cU\pi\cU^*=\pi'$ and $\cU\rho\cU^*=\rho'$.
\end{proposition}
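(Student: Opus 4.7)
The strategy is to exploit the type I structure of the GNS representation to produce a canonical tensor factorisation of $\cH_\phi$, then use the KMS condition to pin down a density operator whose spectral normalisation gives $\|\rho\|=1$. Since $\phi$ is an extremal KMS$_{\beta_0}$ state, Lemma~\ref{lem:extremal}(i) says $\phi$ is a factor state, and the hypothesis of type I gives $\pi_\phi(A)''\cong\cB(\cK)$ for some separable Hilbert space $\cK$ (separability of $\cH_\phi$ coming from separability of $A$). The plan is to construct $(\pi,\cH,\rho)$ essentially as the unique irreducible summand of $\pi_\phi$ together with the density operator implementing $\phi$ at the type I level, and then to reduce uniqueness to the uniqueness of the GNS construction.

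For existence, I would fix a unitary identification $\cH_\phi\cong\cK\otimes\cK'$ under which $\pi_\phi(A)''$ is sent to $\cB(\cK)\otimes\Cz 1$ and its commutant to $\Cz 1\otimes\cB(\cK')$. Define $\pi\colon A\to\cB(\cK)$ as the composition of $\pi_\phi$ with the type I isomorphism; irreducibility follows from $\pi(A)''=\cB(\cK)$. The vector state $\langle \,\cdot\, \xi_\phi,\xi_\phi\rangle$ on $\pi_\phi(A)''$ is normal, so transports to a normal state on $\cB(\cK)$, hence is of the form $x\mapsto\Tr(x\tilde\rho)$ for a unique positive trace-class $\tilde\rho$ with $\Tr(\tilde\rho)=1$. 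To upgrade $\tilde\rho$ to a $\rho$ of norm one, use the KMS$_{\beta_0}$ condition: the modular automorphism group of the faithful normal state $\Tr(\cdot\tilde\rho)$ on $\cB(\cK)$ is implemented by a self-adjoint $\tilde H$ on $\cK$, and the KMS condition forces $\sigma_t$ to coincide (at the factor level) with this modular flow rescaled by $-\beta_0$, so $\tilde\rho=e^{-\beta_0\tilde H}/\Tr(e^{-\beta_0\tilde H})$. Setting $H\defeq\tilde H-\inf\Sp(\tilde H)$ and $\rho\defeq e^{-\beta_0 H}$, one gets $\|\rho\|=1$ and $\phi(a)=\Tr(\pi(a)\rho)/\Tr(\rho)$ after normalising.

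For uniqueness, suppose $(\pi',\cH',\rho')$ is another admissible triple. The point is that the GNS representation of $\phi$ can be recovered from any admissible triple: concretely, the Hilbert space $\cH\otimes\overline{\cH}$ with the representation $a\mapsto\pi(a)\otimes 1$ and cyclic vector $\rho^{1/2}\in\mathrm{HS}(\cH)\cong\cH\otimes\overline{\cH}$ (normalised) realises the GNS triple of $\phi/\Tr(\rho)$, and similarly for the primed triple. Since GNS triples are unique up to unitary equivalence and the representations $a\mapsto\pi(a)\otimes 1$ generate type I factors with commutants $1\otimes\cB(\overline{\cH})$, the uniqueness of the tensor splitting $\pi_\phi\cong\pi\otimes 1$ (standard for type I factor representations) supplies a unitary $\cU\colon\cH\overset{\cong}{\to}\cH'$ intertwining $\pi$ and $\pi'$. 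The identity $\Tr(\pi(a)\rho)/\Tr(\rho)=\Tr(\pi'(a)\rho')/\Tr(\rho')$ together with irreducibility of $\pi$ (so that $\pi(A)$ is weakly dense in $\cB(\cH)$) then forces $\cU\rho\cU^*=\rho'$ after both are normalised to have norm one.

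The main obstacle I anticipate is the bookkeeping in the second step: one must verify carefully that the type I modular picture really applies to our possibly non-unital setting, track the spectral adjustment that replaces $\tilde H$ with $H\geq 0$ having $0\in\Sp H$, and confirm that the separability of $\cK$ (needed for the trace-class description of normal states) is inherited from $A$. A secondary subtlety is verifying that the KMS$_{\beta_0}$ condition for $\phi$ at the C*-algebra level implements the asserted modular flow on $\pi_\phi(A)''$ in precisely the direction and scale that yields the Gibbs form; this is where one must be careful about sign conventions.
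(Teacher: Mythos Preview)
The paper does not supply a proof of this proposition at all: it merely cites \cite[Proposition~2.1]{BLT} and adds a one-sentence remark that the proof there, although stated for unital $A$, does not actually use unitality but only that $\phi$ is a type I factor state. So there is no argument in the paper to compare yours against beyond that citation.

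Your sketch is essentially the standard argument one would expect to find in \cite{BLT}: the type I factor $\pi_\phi(A)''$ gives a tensor splitting $\cH_\phi\cong\cK\otimes\cK'$, the normal vector state transports to a density operator on $\cK$, the KMS condition pins down its Gibbs form, and uniqueness follows because any admissible triple reconstructs the GNS triple via the Hilbert--Schmidt identification. The outline is sound. Two small points worth tightening if you were to write this out in full: first, in the uniqueness step, the unitary $V$ you obtain between the reconstructed GNS spaces intertwines $\pi\otimes 1$ with $\pi'\otimes 1$, and extracting a unitary $\cU\colon\cH\to\cH'$ from this requires the observation that irreducible subrepresentations of a type I factor representation are all unitarily equivalent (your phrase ``uniqueness of the tensor splitting'' gestures at this but is a bit imprecise); second, the equality of normalised traces only gives $\cU\rho\cU^*=c\rho'$ for some scalar $c>0$, and it is the norm-one normalisation that forces $c=1$---you say this, but it is the crux of why the definition of admissible triple insists on $\|\rho\|=1$ rather than $\Tr(\rho)=1$.
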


\begin{remark}
	\cite[Proposition~2.1]{BLT} is stated for unital, separable C*-algebras. However, the proof of this proposition (and the preceding lemmas) does not require that the C*-algebra be unital, only that the KMS state $\varphi$ is a type I factor state.
\end{remark}

The following definition from \cite[Section~2]{BLT} is crucial for much of what we shall do in this paper. 

\begin{definition}[{\cite[Definition~2.4]{BLT}}]
\label{def:partitionfnc}
Let $(A,\sigma)$ be a C*-dynamical system with $A$ separable, let $\beta_0\in(0,\infty)$, and suppose that $\varphi$ is an extremal $\sigma$-KMS$_{\beta_0}$ state on $A$ of type I. Let $(\pi,\cH,\rho)$ be any admissible triple for $\phi$, and let $H:=\frac{-1}{\beta_0}\log \rho$ be the ``represented Hamiltonian'' associated with the state $\phi$.
The \emph{partition function} of $\phi$ is 
\[
Z_\phi(s):=\Tr(e^{-s H})
\]
defined for all $s\in \Cz$ such that $\Re(s)>\beta_0$.
\end{definition}

By Proposition~\ref{prop:adTripleUnique}, $Z_\phi(s)$ does not depend on the choice of admissible triple. We point out that the partition function of $\phi$ depends only on the quasi-equivalence class of $\phi$, see \cite[Remark~2.6]{BLT}.

We shall also need to deal with partition functions of KMS$_\beta$ states for $\beta<0$. Given a C*-dynamical system  $(A,\sigma)$, define a new time evolution $\sigma^-$ on $A$ by $\sigma_t^-:=\sigma_{-t}$. Then a state $\varphi$ of $A$ is a $\sigma$-KMS$_\beta$ if and only if it is a $\sigma^-$-KMS$_{-\beta}$ state. 

\begin{definition}[cf. {\cite[Remark~2.3]{BLT}}]
Let $(A,\sigma)$ be a C*-dynamical system with $A$ separable, let $\beta_0\in(-\infty,0)$, and suppose that $\varphi$ is an extremal $\sigma$-KMS$_{\beta_0}$ state on $A$ of type I. Then $\varphi$ is a $\sigma^-$-KMS$_{-\beta_0}$ state, and $-\beta_0>0$, so there exists an admissible triple $(\pi,\cH,\rho)$ for $\phi$ (with respect to $\sigma^-$). Let $H:=\frac{1}{\beta_0}\log \rho$ be the ``represented Hamiltonian'' associated with this admissible triple.
The \emph{partition function} of $\phi$ is 
\[
Z_\phi(s):=\Tr(e^{s H})
\]
defined for all $s\in \Cz$ such that $\Re(s)<\beta_0$.
\end{definition}

\subsection{KMS states and quasi-invariant measures}
\label{sec:KMSqi}
Let $\G$ denote a second-countable locally compact Hausdorff \'{e}tale groupoid. We denote by $\G^{(0)}$ the unit space of $\G$, and we let $r,s\colon \G\to\G$ denote the range and source maps on $\G$, respectively. We let $C^*(\G)$ denote the full groupoid C*-algebra of $\G$. For background on groupoids and their C*-algebras, we refer the reader to \cite{Ren} (also see \cite{SSW} for a modern treatment).

If $\Gamma$ is a group, then a $\Gamma$-valued 1-cocycle on $\G$ is simply a groupoid homomorphism from $\G$ to $\Gamma$; let $Z^1(\G,\Rz)$ denote the set of continuous $\Rz$-valued 1-cocycles on $\G$. Each $c\in Z^1(\G,\Rz)$ gives rise to a time evolution $\sigma^c$ on $C^*(\G)$ such that
\[
\sigma_t^c(f)(\gamma)=e^{itc(\gamma)}f(\gamma)\quad\text{ for all }f\in C_c(\G),\; \gamma\in\G,\text{ and }t\in\Rz.
\]

Recall that an \emph{open bisection of $\G$} is an open subset $U\subseteq \G$ such that $r$ and $s$ are injective on $U$. If $U$ is an open bisection, then we get a homeomorphism $T:=s\circ (r\vert_U)^{-1}\colon r(U)\overset{\sim}{\to}s(U)$ (here, we used that $\G$ is \'{e}tale).
If $c\in Z^1(\G,\Rz)$ and $\beta\in\Rz$ are given, then one says that a probability measure $m$ on $\G^{(0)}$ is \emph{quasi-invariant with Radon-Nikodym cocycle $e^{-\beta c}$} if for every open bisection $U$, we have $dT_*m/dm=\exp(-\beta c\circ (s\vert_U)^{-1})$, where $T_*m$ is the pushforward of $m$ along $T$.
Let $\QI_\beta(\G,c)$ denote the set of all quasi-invariant probability measures on $\G^{(0)}$ that have Radon-Nikodym cocycle $e^{-\beta c}$. We record the following properties of $\QI_\beta(\G,c)$ here for ease of reference.

\begin{proposition}
\label{prop:QIprops}
The set $\QI_\beta(\G,c)$ is a weak$^*$-closed, convex subset of the set of probability measures on $\G^{(0)}$. Moreover, $m\in\QI_\beta(\G,c)$ is an extreme point of $\QI_\beta(\G,c)$ if and only if $m$ is ergodic in the sense that for any invariant Borel set $B\subseteq \G^{(0)}$, one has either $m(B)=0$ or $m(\G^{(0)}\setminus B)=0$.
\end{proposition}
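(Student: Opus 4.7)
The plan is to verify convexity, weak$^*$-closedness, and the ergodicity characterization in turn. For each open bisection $U$, set $T_U := s \circ (r|_U)^{-1}\colon r(U) \to s(U)$ and $\Delta_U := \exp(-\beta c \circ (s|_U)^{-1}) \in C(s(U))$; the condition $m \in \QI_\beta(\G,c)$ is equivalent to the identity $(T_U)_*(m|_{r(U)}) = \Delta_U \cdot m|_{s(U)}$ for every such $U$. This relation is affine in $m$, giving convexity. For weak$^*$-closedness, rewrite the relation as $\int_{r(U)} f\, dm = \int_{s(U)} (f \circ T_U^{-1}) \Delta_U\, dm$ for all $f \in C_c(r(U))$. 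Because $\G$ is second countable, it admits a countable base of open bisections, and $C_c(r(U))$ is separable; hence only countably many such equalities need be checked, each of which is preserved under weak$^*$-limits once the test functions are extended by zero to yield integrals of continuous functions with compact support in $\G^{(0)}$ (via a partition-of-unity argument).

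For the extreme-points-are-ergodic direction, suppose $m \in \QI_\beta(\G,c)$ is extremal and $B \subseteq \G^{(0)}$ is invariant with $0 < m(B) < 1$. Invariance of $B$ means $r(\gamma) \in B$ if and only if $s(\gamma) \in B$, so for every open bisection $U$ one has $T_U(r(U) \cap B) = s(U) \cap B$. Consequently, the pushforward relation restricts compatibly to $B$ and to its complement, and the normalized restrictions $m_1 := m(B)^{-1} m|_B$ and $m_2 := m(B^c)^{-1} m|_{B^c}$ both lie in $\QI_\beta(\G,c)$. The decomposition $m = m(B) m_1 + m(B^c) m_2$ then contradicts extremality.

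For the converse, suppose $m$ is ergodic and $m = t m_1 + (1-t) m_2$ is a nontrivial convex combination in $\QI_\beta(\G,c)$. Then $m_1 \ll m$; set $h := dm_1/dm$. Comparing the pushforward identities for $m$ and $m_1$ and using $(T_U)_*(h \cdot m|_{r(U)}) = (h \circ T_U^{-1}) \cdot (T_U)_*(m|_{r(U)})$, one obtains $h \circ T_U^{-1} = h$ almost everywhere on $s(U)$ for every open bisection $U$. Therefore each set $\{h > r\}$ is invariant modulo $m$-null sets, and ergodicity forces $h$ to be $m$-essentially constant; since $\int h\, dm = 1$, this yields $m_1 = m$.

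The main technical subtlety is the weak$^*$-closedness step, where one must reduce the bisection-by-bisection formulation to a countable family of integral identities against continuous compactly supported test functions on $\G^{(0)}$ and verify these pass to weak$^*$-limits; the ergodicity equivalence, by contrast, is the standard Radon--Nikodym argument, which adapts to the groupoid setting with essentially no modification.
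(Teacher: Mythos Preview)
Your argument is correct and essentially self-contained, whereas the paper's proof is terse: it declares weak$^*$-closedness and convexity to be routine and then cites \cite[Theorem~5.5]{Ch20} for the equivalence of extremality and ergodicity. So you are supplying precisely the direct Radon--Nikodym argument that the paper outsources. What this buys you is independence from the cited reference; what the citation buys the paper is brevity and a pointer to a more general framework (Christensen's result covers quasi-invariant measures with arbitrary positive Radon--Nikodym cocycles, not just those of the form $e^{-\beta c}$). One minor point worth making explicit in your converse direction: you conclude that $\{h>r\}$ is invariant \emph{modulo $m$-null sets}, while the ergodicity hypothesis in the statement is phrased for genuinely invariant Borel sets. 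The passage from one to the other is standard in the countable-equivalence-relation setting (replace $\{h>r\}$ by its orbit saturation, which differs from it by a null set because the orbit relation is covered by countably many bisections), but strictly speaking it is a step.
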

\begin{proof}
The proof that $\QI_\beta(\G,c)$ is a weak$^*$-closed is routine, and it is easy to show that $\QI_\beta(\G,c)$ is convex.
   
The claim about extremality being equivalent to ergodicity follows from \cite[Theorem~5.5]{Ch20} (observing that $m\in\QI_\beta(\G,c)$ is extremal in $\QI_\beta(\G,c)$ if and only if it is extremal in the sense defined in \cite[Section~5]{Ch20}).\end{proof}

\begin{remark}
    Note that $\QI_\beta(\G,c)$ need not be weak$^*$-compact in the case where $\G^{(0)}$ is non-compact.
\end{remark}

There is a surjective map
\begin{equation}\label{eqn:KMStomeasure}
\KMS_\beta(C^*(\G),\sigma^c)\to \QI_\beta(\G,c), \quad \varphi\mapsto m_\varphi,
\end{equation}
where $m_\varphi$ is the probability measure associated with the state $\varphi\vert_{C_0(\G^{(0)})}$ (see \cite[Theorem~1.3]{Nesh} or \cite{Ren}).
This map has a canonical section: For each $m\in \QI_\beta(\G,c)$, let $\varphi_m:=m\circ\Phi$ where $\Phi$ is the canonical conditional expectation from $C^*(\G)$ onto $C_0(\G^{(0)})$; then $\varphi_m$ is in $\KMS_\beta(C^*(\G),\sigma^c)$, and $m_{\varphi_m}=m$. Here, we view $m$ as a state on $C_0(\G^{(0)})$.

We shall need the following result which gives a sufficient condition for the map in \eqref{eqn:KMStomeasure} to be a bijection. 

\begin{proposition}[{\cite[Proposition~3.2]{KumRen}}]
	\label{prop:KumRen}
Let $\G$ be a second-countable locally compact Hausdorff \'{e}tale groupoid, and let $c\in Z^1(\G,\Rz)$ and $\beta\in\Rz$. If $c^{-1}(0)$ is a principal groupoid, then the map $\QI_\beta(\G,c)\to \KMS_\beta(C^*(\G),\sigma^c)$ given by $m\mapsto \varphi_m$, is an affine bijection.
\end{proposition}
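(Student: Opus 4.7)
The assignment $m \mapsto \varphi_m = m \circ \Phi$ is a one-sided inverse of the map \eqref{eqn:KMStomeasure}, so it is automatically injective; affineness is immediate from linearity of $\Phi$ and of the pairing of states with measures. The content of the proposition is therefore to establish surjectivity, that is, to show $\varphi = \varphi \circ \Phi$ for an arbitrary $\varphi \in \KMS_\beta(C^*(\G), \sigma^c)$. By density of $C_c(\G)$ in $C^*(\G)$ and a partition of unity subordinate to a cover of $\G$ by open bisections, it suffices to prove $\varphi(f) = \varphi(\Phi(f))$ when $f \in C_c(\G)$ is supported in a single open bisection $U$. If $U \subseteq \G^{(0)}$ there is nothing to show, so I would assume $U \cap \G^{(0)} = \emptyset$ (whence $\Phi(f) = 0$) and split the remaining analysis into two cases depending on how $U$ meets the kernel of $c$.

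In the first case, $U \cap c^{-1}(0) = \emptyset$. Shrinking $U$ further, one may arrange that $c$ is close to a nonzero constant $c_0$ on $U$, so that $\sigma_t^c(f) \approx e^{itc_0} f$. Any KMS$_\beta$ state is $\sigma^c$-invariant (by definition when $\beta = 0$, and as a standard consequence otherwise), hence $\varphi(f) = \varphi(\sigma_t^c(f))$ for all $t \in \Rz$. Averaging this identity over $t \in [-T,T]$ and letting $T \to \infty$ makes the right-hand side vanish by oscillation of $e^{itc_0}$, forcing $\varphi(f) = 0$.

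In the second case, $U \subseteq c^{-1}(0) \setminus \G^{(0)}$, and here the principality hypothesis has to do the real work; this is the step I expect to be the main obstacle. Since $\sigma^c$ acts trivially on $C^*(c^{-1}(0))$, the restriction $\varphi|_{C^*(c^{-1}(0))}$ is automatically a tracial state on the C*-algebra of the principal \'etale groupoid $c^{-1}(0)$. The bisection $U$ defines a partial homeomorphism $T \colon r(U) \to s(U)$ of $\G^{(0)}$, and principality of $c^{-1}(0)$ is precisely the statement that $T$ has no fixed points. I would then invoke the standard fact, in the spirit of Renault's work on Cartan subalgebras of groupoid C*-algebras, that any tracial state on the C*-algebra of a principal \'etale groupoid factors through the canonical conditional expectation onto the diagonal; concretely, for a positive cutoff $g \in C_c(r(U))$ of sufficiently small diameter the trace identity $\varphi(fg) = \varphi(gf)$ combined with the disjoint supports of $fg$ and $gf$ (which is precisely the no-fixed-points property of $T$) yields $\varphi(fg) = 0$, and passing to a suitable limit over such $g$ gives $\varphi(f) = 0$. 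Combining the two cases yields $\varphi = \varphi \circ \Phi$ on $C_c(\G)$, hence on $C^*(\G)$, which establishes surjectivity and therefore the affine bijection.
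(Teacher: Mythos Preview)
The paper does not supply its own proof here: the proposition is quoted directly from Kumjian--Renault, with the remark that it also follows from \cite[Theorem~1.3]{Nesh}. So there is nothing in-paper to compare against, and I assess your sketch on its own merits.

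Your overall plan is the right one, and your Case~1 (Riemann--Lebesgue averaging when $c$ is bounded away from $0$ on $U$) is correct. The gap is in the case split itself. You divide into ``$U\cap c^{-1}(0)=\emptyset$'' and ``$U\subseteq c^{-1}(0)\setminus\G^{(0)}$'', but since $c$ is $\Rz$-valued the set $c^{-1}(0)$ is closed and may have empty interior, so a bisection $U$ can meet $c^{-1}(0)$ without being contained in it, and no shrinking around a point $\gamma_0$ with $c(\gamma_0)=0$ will land you in Case~2. For the same reason, your appeal to ``$\varphi\vert_{C^*(c^{-1}(0))}$'' is not well-posed: $c^{-1}(0)$ is a closed but generally non-open subgroupoid, so $C^*(c^{-1}(0))$ does not embed in $C^*(\G)$ in the way you need.

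The repair keeps your ideas but changes the dichotomy to one by \emph{open} conditions. If $U\cap\G^{(0)}=\emptyset$ and $\gamma\in U$, then principality of $c^{-1}(0)$ says precisely that $c(\gamma)=0$ forces $r(\gamma)\neq s(\gamma)$; hence every $\gamma\in U$ satisfies at least one of the open conditions $c(\gamma)\neq 0$ or $r(\gamma)\neq s(\gamma)$. After a partition of unity one may therefore assume either $c\neq 0$ on $U$ (your Case~1) or $r(U)\cap s(U)=\emptyset$. In the second situation take $g\in C_c(r(U))$ with $g\equiv 1$ on $r(\supp f)$; then $g*f=f$, while $f*\sigma_{i\beta}^c(g)=f*g$ is supported where $s(\gamma)\in r(U)\cap s(U)=\emptyset$, so the KMS identity (or the trace identity when $\beta=0$) gives $\varphi(f)=\varphi(g*f)=\varphi(f*g)=0$. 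This is exactly the disjoint-supports mechanism you invoke in Case~2, but now it applies without assuming $c$ vanishes on $U$, and the two open cases genuinely cover $U$.
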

We point out that Proposition~\ref{prop:KumRen} can also be obtained as a consequence of \cite[Theorem~1.3]{Nesh}.

\begin{remark}
	\label{rmk:extreme}		
In the setting of Proposition~\ref{prop:KumRen}, the KMS state $\varphi_m$ corresponding to $m\in\QI_\beta(\G,c)$ is an extreme point of $\KMS_\beta(C^*(\G),\sigma^c)$ if and only if $m$ is an extreme point of $\QI_\beta(\G,c)$.
\end{remark}

\subsection{$\beta$-summable orbits}
\label{sec:betasummable}
Let $\beta\in\Rz^*$. The easiest examples of quasi-invariant measures arise from certain orbits for the action of $\cG$ on $\cG^{(0)}$. For $x,y\in\G^{(0)}$, let $\G_x^y:=s^{-1}(x)\cap r^{-1}(y)$. Following Thomsen \cite[Section~2]{Th12}, we shall say that the orbit $\G x:=r(s^{-1}(x))$ of $x\in\G^{(0)}$ is \emph{consistent} if $c(\G_x^x)=\{0\}$; in this case, we can define a function $l_x:\G x\to (0,\infty)$ by $l_x(z):=e^{-c(\gamma)}$, where $\gamma\in \G_x^z$ (since $\G x$ is consistent, this definition does not depend on the choice of $\gamma$). 
A consistent orbit $\G x$ is \emph{$\beta$-summable} (see \cite[Section~2]{Th12}) if 
\begin{equation}\label{eqn:beta-summable}
\sum_{z\in \G x}l_x(z)^{\beta}<\infty.
\end{equation}
Denote by $\cO_\beta(\G,c)$ the set of $\beta$-summable orbits. Given a $\beta'$-summable orbit $O=\G x\in\cO_{\beta'}(\G,c)$, we let $\beta_O:=\inf(\{\beta\in \Rz : \sum_{z\in \G x}l_x(z)^{\beta}<\infty\})$, so that the series in \eqref{eqn:beta-summable} converges for all $\beta>\beta_O$. 

Each $\beta$-summable orbit $O=\G x$ gives rise to a quasi-invariant probability measure $m_O$ with Radon-Nikodym cocycle $e^{-\beta c}$ via
\begin{equation}
\label{eqn:measurefromorbit}
m_O:=\left(\sum_{z\in O}l_x(z)^{\beta}\right)^{-1}\sum_{z\in O}l_x(z)^{\beta}\delta_z,
\end{equation}
see \cite[Equation~2.5]{Th12} and the surrounding discussion. We let $\varphi_O:=m_O\circ \Phi$ denote the $\sigma^c$-KMS$_\beta$ state on $C^*(\G)$ corresponding to $m_O$.

\begin{remark}
    \label{rmk:consistent}
If $m$ is a quasi-invariant probability measure on $\G^{(0)}$ with Radon-Nikodym cocycle $e^{-\beta c}$ for some $\beta\in (0,\infty)$, then $\G^x_x\subseteq c^{-1}(0)$ for $m$-a.e.\ $x\in\G^{(0)}$ (cf. \cite[Remark~6.5]{Ch20}). Hence, if $c^{-1}(0)$ is principal, then $\G_x^x=\{x\}$ for $m$-a.e.\ $x\in\G^{(0)}$. In particular, if $m$ is concentrated on an orbit $O$, then $O$ must be consistent; moreover, quasi-invariance of $m$ implies that $O$ is even $\beta$-summable.
\end{remark}

\section{Partition functions and groupoid C*-algebras}
\label{sec:pfncs&gpoids}
Throughout this section, $\cG$ denotes a second-countable locally compact Hausdorff \'{e}tale groupoid and $c$ an $\Rz$-valued continuous 1-cocycle on $\cG$.

\subsection{Extremal type I KMS states arising from $\beta$-summable orbits}
\label{sec:summable}

Fix $\beta\in\Rz^*$. In the case where $c^{-1}(0)$ is principal, we shall establish a one-to-one correspondence between $\E_{\rm I}(\KMS_\beta(C^*(\G),\sigma^c))$ and the set of $\beta$-summable orbits; then we will compute the partition function associated with each $\varphi\in\E_{\rm I}(\KMS_\beta(C^*(\G),\sigma^c))$ in terms of the corresponding $\beta$-summable orbit.

\begin{proposition}
\label{prop:gns}
Suppose $m\in \QI_\beta(\G,c)$ for some $\beta\in\Rz^*$ and that $\G_x^x=\{x\}$ for $m$-a.e. $x\in\G^{(0)}$. If $\varphi_m$ is extremal in $\KMS_\beta(C^*(\G),\sigma^c)$, then $\varphi_m$ is of type I if and only if $m=m_O$ for some $\beta$-summable orbit $O$. 
\end{proposition}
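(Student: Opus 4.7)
The plan is to analyse the GNS representation of $\varphi_m$ via admissible triples (Proposition~\ref{prop:adTripleUnique}) and relate the type I condition to $m$ being concentrated on a single orbit. The $m$-a.e.\ triviality of isotropy together with quasi-invariance of $m$ ensures that any orbit carrying mass of $m$ is automatically consistent (Remark~\ref{rmk:consistent}), so the orbit-measures $m_O$ of \eqref{eqn:measurefromorbit} are the only candidates for $m$ in the type I regime.

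For the $(\Leftarrow)$-direction, I would suppose $m=m_O$ with $O=\G x_0$ a $\beta$-summable orbit; since $m_O$ is atomic with positive mass at every point of $O$, the $m$-a.e.\ isotropy hypothesis forces $\G^z_z=\{z\}$ for every $z\in O$. I would then exhibit an admissible triple explicitly: let $\pi_{x_0}$ be the regular representation of $C^*(\G)$ on $\ell^2(\G_{x_0})$ and identify $\ell^2(\G_{x_0})\cong\ell^2(O)$ via the bijection $r\colon\G_{x_0}\to O$ (valid because isotropy along $O$ is trivial). Since $\G$ restricted to $O$ is the full equivalence relation $O\times O$, the image $\pi_{x_0}(C^*(\G))$ contains $K(\ell^2(O))$, forcing irreducibility of $\pi_{x_0}$. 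Let $\rho$ be the diagonal positive trace-class operator on $\ell^2(O)$ given by
\[
\rho\,\delta_z=\bigl(\textstyle\sum_{w\in O}l_{x_0}(w)^{\beta}\bigr)^{-1}l_{x_0}(z)^{\beta}\,\delta_z,
\]
which has trace one by $\beta$-summability. A direct verification using the definitions of $m_O$ and $\Phi$ shows $\varphi_m(a)=\Tr(\pi_{x_0}(a)\rho)$ for all $a\in C^*(\G)$ (check on $C_0(\G^{(0)})$ directly; for elements supported off the unit space, both sides vanish because $\rho$ is diagonal), so $(\pi_{x_0},\ell^2(O),\rho)$ is an admissible triple and $\varphi_m$ is of type I.

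For the $(\Rightarrow)$-direction, suppose $\varphi_m$ is extremal and of type I with admissible triple $(\pi,\cH,\rho)$. The cocycle $c$ vanishes on units, so $\sigma^c$ restricts to the identity on $C_0(\G^{(0)})$; consequently the represented Hamiltonian, and hence $\rho$, commutes with $\pi(C_0(\G^{(0)}))$. Spectrally decompose $\rho=\sum_k\lambda_k P_k$ with distinct $\lambda_k>0$ and $P_k$ finite-rank spectral projections commuting with $\pi(C_0(\G^{(0)}))$. On each finite-dimensional $P_k\cH$ the abelian $C^*$-algebra $\pi(C_0(\G^{(0)}))$ is simultaneously diagonalisable, yielding an orthonormal basis $\{e_z\}_{z\in Z}$ of the support of $\rho$ and a map $\phi\colon Z\to\G^{(0)}$ with $\pi(f)e_z=f(\phi(z))e_z$ for all $f\in C_0(\G^{(0)})$. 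Writing $\rho=\sum_z w_z|e_z\rangle\langle e_z|$ and using $\varphi_m|_{C_0(\G^{(0)})}=m$ then yields the atomic measure $m=(\sum_z w_z)^{-1}\sum_z w_z\,\delta_{\phi(z)}$. Ergodicity of $m$ (Remark~\ref{rmk:extreme}) forces $\phi(Z)$ to lie in a single orbit $O=\G x_0$, and quasi-invariance of $m$ with cocycle $e^{-\beta c}$ combined with triviality of isotropy along $O$ fixes the weights $m(\{z\})/m(\{x_0\})=l_{x_0}(z)^{\beta}$, so $m=m_O$ and finiteness of $m$ yields $\beta$-summability of $O$.

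The main obstacle I anticipate is the $(\Rightarrow)$-direction: carefully justifying the simultaneous diagonalisation on each spectral subspace of $\rho$, and ensuring that the resulting characters of $C_0(\G^{(0)})$ correspond to genuine points of $\G^{(0)}$ rather than characters in a larger multiplier algebra. Irreducibility of $\pi$ alone does not make $\pi(C_0(\G^{(0)}))''$ a MASA in $B(\cH)$, so one cannot directly invoke classical MASA atomicity arguments, which is why the argument has to proceed through the spectral decomposition of $\rho$ itself; all remaining steps—ergodic concentration on an orbit and the cocycle computation of weights—are routine consequences of the structure theory of quasi-invariant measures.
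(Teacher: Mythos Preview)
Your argument is correct but takes a genuinely different route from the paper's. The paper proceeds by identifying the GNS von Neumann algebra $\pi_{\varphi_m}(C^*(\G))''$ with the von Neumann algebra $W^*(\cR)$ of the orbit equivalence relation $\cR$ on $(\G^{(0)},m)$ (adapting \cite[Proposition~5.2]{LLNSW}), and then invoking the classical fact \cite[Theorem~5.4]{Hahn} that for an ergodic countable equivalence relation $W^*(\cR)$ is type~I precisely when $m$ is concentrated on a single orbit. Your $(\Leftarrow)$ direction essentially anticipates Theorem~\ref{thm:irredrep} by exhibiting the admissible triple directly, and your $(\Rightarrow)$ direction bypasses the Feldman--Moore machinery entirely by extracting atomicity of $m$ from the finite-rank spectral projections of $\rho$. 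Your approach is more elementary and self-contained; the paper's is more structural and connects to the equivalence-relation framework that also underlies the type~III results of \cite{LLNSW} used in Section~\ref{sec:finitegraphs&type}. Two small points deserve tightening. First, your $(\Rightarrow)$ argument assumes that $e^{itH}$ (with $H=-\frac{1}{\beta}\log\rho$) implements $\sigma^c_t$ in $\pi$; this is not part of Definition~\ref{def:adtriple}, but it does follow from Proposition~\ref{prop:adTripleUnique} together with the fact that the modular group of the normal state $\Tr(\,\cdot\,\rho)/\Tr(\rho)$ on $B(\cH)$ is $\Ad(\rho^{it})$, which the KMS condition identifies with $\sigma^c_{-\beta t}$. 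Second, Remark~\ref{rmk:extreme} sits under the hypothesis that $c^{-1}(0)$ is principal; here you should instead argue directly that $m\mapsto\varphi_m$ is affine and injective (restrict to $C_0(\G^{(0)})$), so extremality of $\varphi_m$ forces extremality of $m$, and then invoke Proposition~\ref{prop:QIprops} for ergodicity.
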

\begin{proof}
Suppose $\varphi_m$ is extremal in $\KMS_\beta(C^*(\G),\sigma^c)$, so that $\varphi_m$ is a factor state. 
Using the assumption that $\G_x^x=\{x\}$ for $m$-a.e. $x\in\G^{(0)}$, the proof of parts (1) and (2) of \cite[Proposition~5.2]{LLNSW} can be adapted to show that the *-homomorphism 
$\vartheta\colon C_c(\G)\to \Cz[\cR]$ defined by 
\[ \vartheta(f)(x,y)=\sum_{\gamma\in \G_y^x}f(\gamma)\quad\text{for } f\in C_c(\G) \]
extends to an isomorphism $\pi_{\varphi_m}(C^*(\G))''\cong W^*(\cR)$, where $\pi_{\varphi_m}$ is the GNS representation of $\varphi_m$, $\cR$ is the orbit equivalence relation on $\G^{(0)}$ arising from the action of $\G$ on $\G^{(0)}$, and $W^*(\cR)$ is the von Neumann algebra of $\cR$ (with respect to $m$). We refer the reader to \cite[Section~4]{LLNSW} and \cite{FM1,FM2} for background on von Neumann algebras of equivalence relations.

By Proposition~\ref{prop:QIprops}, the equivalence relation $\cR$ on $(\G^{(0)},m)$ is ergodic. Thus, the factor $W^*(\cR)$ is of type I if and only if $m$ is concentrated on an orbit (see, for instance, \cite[Theorem~5.4]{Hahn}). If $m$ is concentrated on the orbit $O$, then $O$ must be consistent and $\beta$-summable (see Remark~\ref{rmk:consistent}).
\end{proof}

We are now ready to give sufficient conditions under which the problem of computing the extremal $\sigma^c$-KMS$_\beta$ states on $C^*(\G)$ (for $\beta>0$) that are of type I can be reduced to the problem of computing the $\beta$-summable orbits. In Theorem~\ref{thm:irredrep} below, we will use this parameterisation to compute the partition function of the KMS state attached to a $\beta$-summable orbit.

\begin{proposition}
	\label{prop:main}
	Let $\G$ be a second-countable locally compact Hausdorff \'{e}tale groupoid, and let $c\in Z^1(\G,\Rz)$ and $\beta\in (0,\infty)$. If the groupoid $c^{-1}(0)$ is principal, then the map 
	\[
	\cO_\beta(\G,c)\to \E_{\rm I}(\KMS_\beta(C^*(\G),\sigma^c)), \quad O\mapsto \varphi_O,
	\] 
	is a bijection. 
\end{proposition}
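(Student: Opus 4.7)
The plan is to combine the three key preliminary results at our disposal: Proposition~\ref{prop:KumRen}, which under the principality assumption on $c^{-1}(0)$ identifies $\QI_\beta(\G,c)$ with $\KMS_\beta(C^*(\G),\sigma^c)$ via the affine bijection $m\mapsto \varphi_m$; Proposition~\ref{prop:QIprops} together with Remark~\ref{rmk:extreme}, which say that extremal points of $\KMS_\beta(C^*(\G),\sigma^c)$ correspond to ergodic measures in $\QI_\beta(\G,c)$; and Proposition~\ref{prop:gns}, which characterises type I extremal KMS states in terms of measures concentrated on a single $\beta$-summable orbit, provided the $m$-almost-everywhere principality hypothesis $\G^x_x=\{x\}$ holds. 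The strategy is to push everything through these equivalences.

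First I would verify that the map $O\mapsto \varphi_O$ is well-defined, i.e.\ that $\varphi_O\in\E_{\rm I}(\KMS_\beta(C^*(\G),\sigma^c))$ for every $O\in\cO_\beta(\G,c)$. Membership in $\KMS_\beta$ is automatic from $m_O\in \QI_\beta(\G,c)$ (a direct check against the definition using formula \eqref{eqn:measurefromorbit}) combined with Proposition~\ref{prop:KumRen}. For extremality I would observe that $m_O$ is manifestly ergodic: any $\G$-invariant Borel set $B\subseteq \G^{(0)}$ either contains the orbit $O$ (so $m_O(B)=1$) or is disjoint from $O$ (so $m_O(B)=0$), since $m_O$ is supported on $O$ and $O$ is a single $\G$-orbit. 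By Proposition~\ref{prop:QIprops} and Remark~\ref{rmk:extreme}, this gives extremality of $\varphi_O$. For the type I claim, since $\beta>0$ and $c^{-1}(0)$ is principal, Remark~\ref{rmk:consistent} supplies the hypothesis $\G^x_x=\{x\}$ for $m_O$-a.e.\ $x$, and then Proposition~\ref{prop:gns} applies with the measure $m_O$ already concentrated on the orbit $O$.

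Next I would establish surjectivity. Let $\varphi\in \E_{\rm I}(\KMS_\beta(C^*(\G),\sigma^c))$. By Proposition~\ref{prop:KumRen}, $\varphi=\varphi_m$ for a unique $m\in\QI_\beta(\G,c)$, and by Remark~\ref{rmk:extreme}, $m$ is extremal in $\QI_\beta(\G,c)$, hence ergodic by Proposition~\ref{prop:QIprops}. Remark~\ref{rmk:consistent} again provides the principality of isotropy $m$-a.e., so Proposition~\ref{prop:gns} applies and, since $\varphi_m$ is of type I, forces $m=m_O$ for some $\beta$-summable orbit $O$. Thus $\varphi=\varphi_O$. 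For injectivity, if $O\ne O'$ are distinct orbits, then $O\cap O'=\emptyset$, so the measures $m_O$ and $m_{O'}$ have disjoint supports and hence differ; injectivity of $m\mapsto \varphi_m$ from Proposition~\ref{prop:KumRen} then gives $\varphi_O\ne\varphi_{O'}$.

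The only genuine content, as opposed to bookkeeping, is in the invocation of Proposition~\ref{prop:gns}; the rest is assembling the equivalences in the right order. The mildly subtle step I would pay attention to is the ergodicity of $m_O$, which is intuitively obvious but does need the observation that a $\G$-invariant Borel set is a union of orbits (modulo the principality assumption on isotropy); with this, the proof reduces to a clean three-paragraph argument.
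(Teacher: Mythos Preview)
Your proposal is correct and follows essentially the same route as the paper: both arguments combine Proposition~\ref{prop:KumRen} with Proposition~\ref{prop:gns}, using Remark~\ref{rmk:consistent} to supply the $m$-a.e.\ trivial isotropy hypothesis. Your version is simply more explicit, spelling out well-definedness (in particular the ergodicity of $m_O$) and injectivity, whereas the paper compresses these into the ``if and only if'' direction of Proposition~\ref{prop:gns} together with the bijection from Proposition~\ref{prop:KumRen}.
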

\begin{proof}
	Since $c^{-1}(0)$ is principal, Proposition~\ref{prop:KumRen} implies that the map $\QI_\beta(\G,c)\to \KMS_\beta(\G,c)$ given by $m\mapsto \varphi_m$ is a bijection. If the $\sigma^c$-KMS$_\beta$ state $\varphi_m$ is extremal, then Proposition~\ref{prop:gns} implies that $\varphi_m$ is of type I if and only if $m=m_O$ for some $O\in \cO_\beta(\G,c)$ (cf. Remark~\ref{rmk:consistent}). 
\end{proof}

\begin{remark}
	If the groupoid $c^{-1}(0)$ is principal, then by \cite[Proposition~3.2]{KumRen}, every $\sigma^c$-KMS$_\beta$ state factors through the quotient map $C^*(\G)\to C_r^*(\G)$, so that there is a canonical isomorphism $\KMS_\beta(C^*(\G),\sigma^c)\cong \KMS_\beta(C_r^*(\G),\sigma^c)$; here, we also use $\sigma^c$ to denote the time evolution on $C_r^*(\G)$ arising from $c\in Z^1(\G,\Rz)$. Thus, the statement of Proposition~\ref{prop:main} also holds if $C^*(\G)$ is replaced by $C^*_r(\G)$. In general, it is a difficult problem to determine which $\sigma^c$-KMS states on $C^*(\G)$ factor through the quotient map $C^*(\G)\to C_r^*(\G)$, see \cite{ChNesh}.
\end{remark}

\subsection{The partition function associated with a $\beta$-summable orbit}

For our computation of the partition function associated with $\phi_O$, where $O\in\cO_\beta(\cG,c)$, it will be convenient to scale the series appearing as the normalising factor in \eqref{eqn:measurefromorbit}. 

\begin{lemma}
\label{lem:series}
 Suppose $O=\G x\subseteq \G^{(0)}$ is a $\beta$-summable orbit for some $\beta\in (0,\infty)$. For each $z\in O$, choose $\gamma_z\in \G_x^z$. Then the countable set $\{c(\gamma_z) : z\in O\}$ is bounded below, and for each $r\in\Rz_{>0}$, there are only finitely many $\gamma_z$ for which $c(\gamma_z)\leq r$. If we let $\lambda_1,\lambda_2,...$ be a strictly increasing listing of the elements in $\{c(\gamma_z) : z\in O\}$, then
 \[
 Z_O(s):=e^{s\lambda_1}\sum_{z\in \G x}l_x(z)^{s}=\sum_{i=1}^\infty  |\{z\in\G x : c(\gamma_z)=\lambda_i\}|e^{-s (\lambda_i-\lambda_1)} 
 \]
  is a (generalised) Dirichlet series as defined in \cite{HR} which converges for all $s\in\Cz$ with $\Re(s)>\beta_O$.
\end{lemma}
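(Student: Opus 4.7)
The plan is to unwind the definitions and exploit the fact that $\beta$-summability of $O$ forces the positive terms $e^{-\beta c(\gamma_z)}$ to be summable over the countable index set $O$, which in turn forces the values $c(\gamma_z)$ to be bounded below and sparse near any fixed threshold.

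First, I would observe that since $O$ is consistent, $l_x(z) = e^{-c(\gamma_z)}$ regardless of the choice of $\gamma_z \in \G_x^z$, so $\sum_{z \in O} l_x(z)^\beta = \sum_{z \in O} e^{-\beta c(\gamma_z)} < \infty$ by the $\beta$-summability hypothesis. Because this is a convergent series of positive real numbers indexed by a countable set, for every $\eps > 0$ there are only finitely many $z \in O$ with $e^{-\beta c(\gamma_z)} > \eps$. Translating this through the strictly decreasing function $t \mapsto e^{-\beta t}$ (here I use $\beta > 0$), one gets exactly the two claims that $\{c(\gamma_z) : z \in O\}$ is bounded below and that $\{z \in O : c(\gamma_z) \leq r\}$ is finite for every $r \in \Rz$. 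In particular the set $\{c(\gamma_z) : z \in O\}$ has a minimum $\lambda_1$, and by iterating the same argument the values enumerate as a strictly increasing sequence $\lambda_1 < \lambda_2 < \cdots$ with $\lambda_i \to \infty$ (finite if $O$ is finite).

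Next, I would group the series by the value of $c(\gamma_z)$: writing $n_i := |\{z \in O : c(\gamma_z) = \lambda_i\}|$, each $n_i$ is a finite positive integer by the previous step, and
\[
\sum_{z \in O} l_x(z)^s \;=\; \sum_{z \in O} e^{-s c(\gamma_z)} \;=\; \sum_{i \geq 1} n_i\, e^{-s \lambda_i}.
\]
Multiplying by $e^{s\lambda_1}$ and setting $\mu_i := \lambda_i - \lambda_1$ (so $\mu_1 = 0$ and $\mu_i \nearrow \infty$) yields
\[
Z_O(s) \;=\; \sum_{i \geq 1} n_i\, e^{-s \mu_i},
\]
which is a generalised Dirichlet series in the sense of Hardy--Riesz.

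Finally, convergence on $\{\Re(s) > \beta_O\}$ is immediate: for real $s > \beta_O$ the series $\sum_z e^{-s c(\gamma_z)}$ converges by the definition of $\beta_O$, and for complex $s$ with $\Re(s) > \beta_O$ we have $|e^{-s c(\gamma_z)}| = e^{-\Re(s)\, c(\gamma_z)}$, so absolute (hence ordinary) convergence follows by comparison with the real case. Rearrangement to the regrouped form is then justified by absolute convergence. I do not expect any genuine obstacle; the only care needed is checking that grouping the series by value of $c(\gamma_z)$ is legitimate, which it is precisely because each level set is finite and the overall series is absolutely convergent on the stated half-plane.
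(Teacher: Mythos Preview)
Your proof is correct and is exactly the routine verification the paper has in mind; the paper itself simply records this lemma as ``a routine calculation'' without spelling out the details, and your argument fills them in in the natural way.
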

\begin{proof}
    This is a routine calculation.
\end{proof}

We record the following easy consequence as a corollary for ease of reference.

\begin{corollary}
\label{cor:mO}
The measure $m_O$ can equivalently be written as 
\[
m_O=\frac{1}{Z_O(\beta)}\sum_{z\in O}e^{-\beta(c(\gamma_z)-\lambda_1)}\delta_z.
\]
\end{corollary}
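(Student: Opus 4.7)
The plan is a direct computation substituting the definition of $l_x(z)$ into the formula \eqref{eqn:measurefromorbit} and then multiplying numerator and denominator by $e^{\beta\lambda_1}$ to match the normalisation in $Z_O(\beta)$. Concretely, since $O$ is consistent, for each $z\in O$ the value $l_x(z)=e^{-c(\gamma)}$ does not depend on the choice of $\gamma\in\G_x^z$, so in particular we may take $\gamma=\gamma_z$ and obtain $l_x(z)^\beta=e^{-\beta c(\gamma_z)}$. Substituting this into \eqref{eqn:measurefromorbit} gives
\[
m_O=\Bigl(\sum_{z\in O}e^{-\beta c(\gamma_z)}\Bigr)^{-1}\sum_{z\in O}e^{-\beta c(\gamma_z)}\delta_z.
\]

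Next I would multiply the numerator and denominator of this expression by $e^{\beta\lambda_1}$. By the definition of $Z_O$ in Lemma~\ref{lem:series}, the denominator becomes $e^{\beta\lambda_1}\sum_{z\in O}l_x(z)^\beta=Z_O(\beta)$, while each summand in the numerator becomes $e^{-\beta(c(\gamma_z)-\lambda_1)}\delta_z$, yielding precisely the claimed formula. There is no real obstacle here: the statement is a cosmetic reindexing of \eqref{eqn:measurefromorbit} designed to display $m_O$ in a form where the normalising constant is recognisable as the Dirichlet series $Z_O(\beta)$ and where the exponents $c(\gamma_z)-\lambda_1\geq 0$ are non-negative, which will be convenient for the subsequent computations of partition functions in Theorem~\ref{thm:irredrep}.
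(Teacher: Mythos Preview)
Your argument is correct and is exactly the routine computation the paper has in mind; indeed, the paper does not even record a proof, merely stating the corollary as an easy consequence of the definition of $Z_O$ in Lemma~\ref{lem:series} and equation~\eqref{eqn:measurefromorbit}. There is nothing to add.
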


We shall need the following result from \cite{SW} on irreducible representations coming from orbits.

\begin{lemma}[{\cite[Lemma~4.5]{SW}}]\label{lem:SW}
	Let $\G$ be a second-countable locally compact Hausdorff \'{e}tale groupoid, and let $O\subseteq \G^{(0)}$ be an orbit. There exists an irreducible representation $\pi_O\colon C^*(\G)\to\cB(\ell^2(O))$
	such that 
	\begin{equation}
	\label{eqn:irredrep}
	\pi_O(f)\delta_z=\sum_{\gamma\in\G_z}f(\gamma)\delta_{r(\gamma)}\quad \text{for all }z\in O, f\in C_c(\G).
	\end{equation}
\end{lemma}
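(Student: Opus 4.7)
The plan is to define $\pi_O$ directly by the stated formula on $C_c(\G)$, verify the *-algebra axioms, bound the resulting operator using Schur's test, and then conclude irreducibility via the bisection structure of $\G$. Since $\G$ is \'etale, $\G_z$ is discrete for each $z$, and for $f \in C_c(\G)$ the set $\{\gamma \in \G_z : f(\gamma) \ne 0\}$ is finite, so $\pi_O(f)\delta_z$ is a well-defined vector in $\ell^2(O)$. Multiplicativity $\pi_O(f*g) = \pi_O(f)\pi_O(g)$ follows from the substitution $\gamma = \alpha\beta$ in the convolution product, and $\pi_O(f^*) = \pi_O(f)^*$ follows by unpacking the matrix coefficient $\langle \pi_O(f)\delta_u, \delta_z\rangle = \sum_{\gamma \in \G_u^z} f(\gamma)$ and applying $f^*(\gamma) = \overline{f(\gamma^{-1})}$.

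For boundedness, the matrix entries of $\pi_O(f)$ relative to $\{\delta_w\}_{w \in O}$ are $a_{wz} = \sum_{\gamma \in \G_z^w} f(\gamma)$; Schur's test together with
\[
\sum_w |a_{wz}| \le \sum_{\gamma \in \G_z} |f(\gamma)|, \qquad \sum_z |a_{wz}| \le \sum_{\gamma \in \G^w} |f(\gamma)|,
\]
yields $\|\pi_O(f)\| \le \|f\|_I$, so $\pi_O$ extends to a bounded *-representation of $C^*(\G)$. For irreducibility, one observes that for $g \in C_0(\G^{(0)}) \hookrightarrow C_c(\G)$ the operator $\pi_O(g)$ acts on $\ell^2(O)$ as multiplication by $g|_O$. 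Since $\G^{(0)}$ is second countable locally compact Hausdorff, for each $w \in O$ a shrinking Urysohn sequence produces $g_n \in C_0(\G^{(0)})$ with $g_n \to \chi_{\{w\}}$ pointwise on $O$ and $\|g_n\|_\infty \le 1$, so $\pi_O(g_n)$ converges strongly to the rank-one projection onto $\delta_w$. Hence any $T \in \pi_O(C^*(\G))'$ is diagonal, $T\delta_w = \lambda_w\delta_w$. For arbitrary $z, w \in O$, choose $\gamma_0 \in \G_z^w$ (which exists by definition of orbit), an open bisection $U \ni \gamma_0$, and $f \in C_c(U)$ with $f(\gamma_0) = 1$; injectivity of $s$ on $U$ forces $\G_z \cap U = \{\gamma_0\}$, so equating the $\delta_w$-coefficients in $T\pi_O(f)\delta_z = \pi_O(f)T\delta_z$ yields $\lambda_w = \lambda_z$. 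Thus $T$ is a scalar and $\pi_O$ is irreducible.

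The main subtlety is that when the isotropy $\G_z^z$ is non-trivial, the formula for $\pi_O$ ``collapses'' distinct arrows in $\G_z^w$ into a single coefficient of $\delta_w$, so one cannot realise $\pi_O$ as a subrepresentation of the regular representation $\lambda_z$ on $\ell^2(\G_z)$ via an isometric inclusion $\ell^2(O) \hookrightarrow \ell^2(\G_z)$ (such an inclusion typically fails when the isotropy is infinite). Schur's test sidesteps this by estimating the collapsed matrix directly, and the bisection step in the irreducibility argument never requires separating the individual isotropy arrows, so no amenability or principal-groupoid hypothesis is needed.
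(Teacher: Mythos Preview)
Your argument is correct. Note, however, that the paper does not give its own proof of this lemma: it is quoted verbatim as \cite[Lemma~4.5]{SW} and used as a black box. So there is nothing to compare against in the paper itself; you have supplied a self-contained proof where the authors simply invoke an external reference.

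For the record, your route is the standard one and matches what Sims--Williams do: define $\pi_O$ on $C_c(\G)$ by the displayed formula, check the $*$-algebra identities, bound by the $I$-norm (your Schur estimate is exactly this), and then argue irreducibility by first showing $C_0(\G^{(0)})$ acts diagonally---so the commutant is diagonal---and then using a bisection to move between any two basis vectors $\delta_z$, $\delta_w$. Your Urysohn argument uses second-countability (hence metrizability of $\G^{(0)}$) to get $\{w\}$ as a $G_\delta$, which is legitimate here. Your closing remark about non-trivial isotropy is apt: one really cannot realise $\pi_O$ as a summand of the regular representation $\lambda_z$ in general, which is why the direct $I$-norm bound is the right way to extend to $C^*(\G)$.
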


The next theorem, which is the main result of this section, provides us with an explicit admissible triple (in the sense of Definition~\ref{def:adtriple}) attached to a $\beta$-summable orbit $O$ in the case $\beta>0$. This allows us to obtain a formula for the partition function of the KMS state attached to the $\beta$-summable orbit.

\begin{theorem}
\label{thm:irredrep}
Suppose $O=\G x\in\cO_\beta(\G,c)$ for some $\beta\in (0,\infty)$ is such that $\G_x^x=\{x\}$, and let $\pi_O\colon C^*(\G)\to\cB(\ell^2(O))$ be the irreducible representation associated with $O$ from Lemma~\ref{lem:SW}. For each $z\in O$, choose $\gamma_z\in\G_x^z$. Let $H_O$ be the (possibly unbounded) diagonal operator on $\ell^2(O)$ defined by $H_O\delta_z=(c(\gamma_z)-\lambda_1)\delta_z$, where $\lambda_1:=\min(\{c(\gamma_z) : z\in O\})$, as in Lemma~\ref{lem:series}. Then $(\pi_O,\ell^2(O),e^{-\beta H_O})$ is an admissible triple for $\phi_O$ (cf. Definition~\ref{def:adtriple}), so that the partition function, $Z_{\varphi_O}(s)$, of $\phi_O$ is given by $\Tr(e^{-s H_O})=Z_O(s)$.
\end{theorem}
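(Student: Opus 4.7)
The plan is to verify the three defining properties of an admissible triple (irreducibility, trace-class of norm one, and the state-recovery formula) and then read off the partition function from the trace of $e^{-sH_O}$.

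First, irreducibility of $\pi_O$ is already granted by Lemma~\ref{lem:SW}. For the operator $\rho := e^{-\beta H_O}$, note that since $c(\gamma_z)$ is well-defined (independent of the choice of $\gamma_z \in \G_x^z$) by consistency of $O$, the diagonal operator $H_O$ is well-defined, positive, and has $0$ in its spectrum (attained at any $z_0 \in O$ with $c(\gamma_{z_0}) = \lambda_1$). Hence $\|\rho\| = 1$. By functional calculus and Lemma~\ref{lem:series}, $\rho$ is trace-class with
\[
\Tr(\rho) = \sum_{z \in O} e^{-\beta(c(\gamma_z) - \lambda_1)} = e^{\beta\lambda_1}\sum_{z\in O} l_x(z)^\beta = Z_O(\beta).
\]
Replacing $\beta$ by $s$ with $\Re(s) > \beta_O$ gives the stated formula for the partition function, once the triple is shown to be admissible.

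Next I would establish the key observation that the trivial isotropy at $x$ propagates to all points of the orbit: if $\gamma \in \G_z^z$ for some $z \in O$, then $\gamma_z^{-1}\gamma\gamma_z \in \G_x^x = \{x\}$, forcing $\gamma = z$. Hence $\G_z^z = \{z\}$ for every $z \in O$. Using the formula from Lemma~\ref{lem:SW}, this immediately yields, for $f \in C_c(\G)$,
\[
\langle \pi_O(f)\delta_z, \delta_z\rangle = \sum_{\gamma \in \G_z^z} f(\gamma) = f(z),
\]
where $f(z)$ denotes the value of $f$ at the unit $z \in \G^{(0)}$. The diagonal sum then gives
\[
\Tr(\pi_O(f)\rho) = \sum_{z \in O} e^{-\beta(c(\gamma_z) - \lambda_1)} f(z).
\]

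To complete the argument, I would compare this with $\varphi_O$. By definition $\varphi_O = m_O \circ \Phi$, and $\Phi(f) = f|_{\G^{(0)}}$ for $f \in C_c(\G)$, so Corollary~\ref{cor:mO} yields
\[
\varphi_O(f) = \frac{1}{Z_O(\beta)}\sum_{z \in O} e^{-\beta(c(\gamma_z) - \lambda_1)} f(z) = \frac{\Tr(\pi_O(f)\rho)}{\Tr(\rho)}.
\]
Both sides are norm-continuous in $f$ (the right-hand side because $|\Tr(\pi_O(f)\rho)| \leq \|f\|\,\|\rho\|_1$, and the left-hand side because $\varphi_O$ is a state), so the identity extends from the dense subalgebra $C_c(\G) \subseteq C^*(\G)$ to all of $C^*(\G)$. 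This shows $(\pi_O, \ell^2(O), e^{-\beta H_O})$ is an admissible triple for $\varphi_O$, and by Definition~\ref{def:partitionfnc} the partition function is $Z_{\varphi_O}(s) = \Tr(e^{-sH_O}) = Z_O(s)$.

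The only subtle point is the propagation of trivial isotropy from $x$ to arbitrary $z \in O$, which is essential for the formula $\langle \pi_O(f)\delta_z, \delta_z\rangle = f(z)$ to produce exactly the diagonal weights of $m_O$. All other steps are direct computations combining Lemmas~\ref{lem:series} and \ref{lem:SW} with Corollary~\ref{cor:mO}.
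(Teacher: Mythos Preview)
Your proof is correct and follows essentially the same route as the paper: verify that $e^{-\beta H_O}$ is positive, trace-class with norm $1$, compute $\langle\pi_O(f)\delta_z,\delta_z\rangle=f(z)$ using triviality of isotropy along the orbit, and then match the resulting trace formula against $\varphi_O$ via Corollary~\ref{cor:mO}. One small difference worth noting: the paper additionally verifies the covariance relation $\pi_O(\sigma^c_t(f))=\Ad(e^{itH_O})(\pi_O(f))$, which is natural but not strictly required by Definition~\ref{def:adtriple}; conversely, you make explicit the propagation step $\G_z^z=\{z\}$ for all $z\in O$, which the paper simply uses without comment.
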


\begin{proof}
Let $f\in C_c(\G)$, $z\in O$, and $t\in\Rz$. Since $O$ is consistent, we have $c(\gamma)=c(\gamma_{r(\gamma)}\gamma_z^{-1})$ for every $\gamma\in\G_z$. Hence, we have
\begin{align*}
\pi_O(\sigma^c_t(f))e^{it H_O}\delta_z=e^{it(c(\gamma_z)-\lambda_1)}\sum_{\gamma\in\G_z}e^{itc(\gamma)}f(\gamma)\delta_{r(\gamma)}&=\sum_{\gamma\in\G_z}f(\gamma)e^{it(c(\gamma_z)-\lambda_1)}e^{it(c(\gamma_{r(\gamma)})-c(\gamma_z))}\delta_{r(\gamma)}\\
&=\sum_{\gamma\in\G_z }f(\gamma)e^{it H_O}\delta_{r(\gamma)}=e^{it H_O}\pi_O(f)\delta_z,
\end{align*}
which shows that $\pi_O(\sigma^c_t(f))=\Ad(e^{it H_O})(\pi_O(f))$ for all $t\in\Rz$ and $f\in C_c(\G)$.

We can see that $H_O$ is positive and $\Sp H_O \ni 0$ directly from the definition of $H_O$. Using the orthonormal basis $\{\delta_z : z\in O\}$, it is easy to see that $\Tr(e^{-\beta H_O})=Z_O(\beta)$, so that in particular, $e^{-\beta H_O}$ is of trace class. 

If $f\in C_c(\G)$ and $z\in O$, then
\[
\langle\pi_O(f)\delta_z,\delta_z\rangle=\langle\sum_{\gamma\in\G_z}f(\gamma)\delta_{r(\gamma)}
,\delta_z\rangle=\sum_{\gamma\in\G_z^z}f(\gamma)=f(z),
\]
where the last equality uses that $\G_z^z=\{z\}$.
Now we have:
\begin{align*}
\Tr(\pi_O(f) e^{-\beta H_O})&=\sum_{z\in O}e^{-\beta (c(\gamma_z)-\lambda_1)}\langle\pi_O(f)\delta_z,\delta_z\rangle=\sum_{z\in O}e^{-\beta (c(\gamma_z)-\lambda_1)}f(z)\\
&=Z_O(\beta)\left(\frac{1}{Z_O(\beta)}\sum_{z\in O}f(z)e^{-\beta (c(\gamma_z)-\lambda_1)}\right)=Z_O(\beta)\varphi_O(f),
\end{align*}
where the last equality uses Corollary~\ref{cor:mO}. Thus, we have $\varphi_O(a)=\frac{\Tr(\pi_O(a) e^{-\beta H_O})}{\Tr(e^{-\beta H_O})}$ for all $a\in C^*(\G)$, so that $(\pi_O,\ell^2(O),e^{-\beta H_O})$ is an admissible triple for $\phi_O$. 
\end{proof}

\begin{corollary}
Suppose $O=\G x\in\cO_\beta(\G,c)$ for some $\beta\in (-\infty,0)$ is such that $\G_x^x=\{x\}$, then $O\in\cO_{-\beta}(\G,-c)$, and $-\beta\in (0,\infty)$. Let $(\pi_O,\ell^2(O),e^{\beta H_O})$ be the admissible triple from Theorem~\ref{thm:irredrep} (with respect to $\sigma^{-c}$) corresponding to the $(\sigma^{-c})$-KMS$_{-\beta}$ state $\varphi_\cO$. Then the partition function of $\varphi_\cO$ is given by $Z_{\varphi_O}(s)=Z_O(-s)=\Tr(e^{s H_O})$.
\end{corollary}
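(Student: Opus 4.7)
The strategy is a straightforward time-reversal reduction to Theorem~\ref{thm:irredrep}. Since $\sigma^{-c}_t = \sigma^c_{-t}$, a state is $\sigma^c$-KMS$_\beta$ if and only if it is $\sigma^{-c}$-KMS$_{-\beta}$; with $\beta<0$, the inverse temperature $-\beta$ is positive, so the hypotheses of Theorem~\ref{thm:irredrep} are satisfied with the cocycle $-c$ at inverse temperature $-\beta$, once the combinatorial data on the orbit is shown to transfer.

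For this transfer, I would first note that for $\gamma \in \G_x^z$ the length function for $-c$ is $\tilde{l}_x(z) = e^{c(\gamma)} = l_x(z)^{-1}$, that consistency of the orbit is preserved under the sign change (since $\G_x^x = \{x\}$ is unchanged), and that
\[
\sum_{z \in O} \tilde{l}_x(z)^{-\beta} = \sum_{z \in O} l_x(z)^\beta < \infty,
\]
so $O \in \cO_{-\beta}(\G, -c)$. A direct inspection of \eqref{eqn:measurefromorbit} shows that $m_O$ computed from $(c,\beta)$ and from $(-c,-\beta)$ coincide, hence so do the associated KMS states $\varphi_O = m_O \circ \Phi$. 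Applying Theorem~\ref{thm:irredrep} to $(\G, -c)$ at inverse temperature $-\beta$ then produces the admissible triple $(\pi_O, \ell^2(O), e^{-(-\beta) H_O}) = (\pi_O, \ell^2(O), e^{\beta H_O})$, with the diagonal Hamiltonian $H_O \delta_z = (-c(\gamma_z) - \lambda_1)\delta_z$ of the corollary, and yields the formula $Z_O(s) = \Tr(e^{-sH_O})$ for $\Re(s) > -\beta$, where $Z_O$ is the Dirichlet series attached to the orbit under the cocycle $-c$.

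To conclude, I would unwind the definition of partition function in the negative-$\beta$ case stated immediately before the corollary: the represented Hamiltonian is $H := \tfrac{1}{\beta}\log(e^{\beta H_O}) = H_O$, so $Z_{\varphi_O}(s) := \Tr(e^{sH}) = \Tr(e^{sH_O})$ for $\Re(s) < \beta$, and the substitution $s \mapsto -s$ in $Z_O(s) = \Tr(e^{-sH_O})$ gives $Z_O(-s) = \Tr(e^{sH_O})$, completing the chain of equalities asserted in the corollary. I do not anticipate any real obstacle; the main care required is purely bookkeeping, namely keeping straight the sign conventions appearing in the $\beta>0$ and $\beta<0$ definitions of admissible triple and partition function, and checking that the two domains of convergence correspond correctly under $s \mapsto -s$.
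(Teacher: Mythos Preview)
Your proposal is correct and is precisely the argument the paper has in mind; the corollary is stated without proof because it follows immediately by this time-reversal reduction to Theorem~\ref{thm:irredrep} together with the definition of partition function for negative $\beta$ given just before the corollary. The only cosmetic point is that the $\lambda_1$ appearing in your Hamiltonian formula should be the minimum of $\{-c(\gamma_z):z\in O\}$ rather than of $\{c(\gamma_z):z\in O\}$, but you clearly intend this since you are applying Theorem~\ref{thm:irredrep} with the cocycle $-c$.
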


\section{C*-dynamical invariants for Toeplitz algebras of graphs ($\beta>0$)}
\label{sec:toeplitz}

\subsection{Background on C*-algebras associated with graphs}
\label{ssec:backgrounToeplitz}

In this subsection, we layout some notation and conventions that will be used throughout this paper. Our conventions for graphs will follow those used in \cite{CarLar}, rather than those from \cite{aHLRS13,aHLRS15}.
Let $E=(E^0,E^1,r,s)$ be a countable directed graph with vertex set $E^0$, edge set $E^1$, and range and source maps $r,s\colon E^1\to E^0$. 
 
Recall that $E$ is said to be finite when both $E^0$ and $E^1$ are finite.
Let $n\geq 1$. A path in $E$ of length $n$ is a sequence $\mu=\mu_0\mu_1...\mu_{n-1}$ of edges $\mu_i\in E^1$ such that $r(\mu_i)=s(\mu_{i+1})$ for $0\leq i< n-1$. We shall view vertices $v\in E^0$ as paths of length $0$, and for each $n\geq 0$, we let $E^n$ be the set of paths in $E$ that are of length $n$. Let $E^*:=\bigsqcup_{n\geq 0} E^n$ be the set of all finite paths in $E$, and let $E^\infty$ be the set of infinite paths in $E$, that is, infinite sequences of edges $x=x_0x_1...$ such that $r(x_i)=s(x_{i+1})$ for all $i\geq 0$.
For $\mu\in E^*$, let $|\mu|$ denote the length of $\mu$. We extend $s$ to $E^\infty$ in the obvious way, and also extend both $r$ and $s$ to $E^*$. Set $E^{\leq \infty}:=E^*\cup E^\infty$. Given $\mu\in E^*$ and $\nu\in E^{\leq \infty}$ such that $r(\mu)=s(\nu)$, we let $\mu\nu$ denote the path formed by concatenating $\mu$ and $\nu$. If $\mu\in E^*$ with $s(\mu)=r(\mu)$ and $n\geq 0$, then we let $\mu^n:=\underbrace{\mu\mu...\mu}_{\textup{n times}}$.

The \emph{Toeplitz algebra} $\T C^*(E)$ of $E$, as defined in \cite[\S~4]{FR}, is the universal C*-algebra generated by mutually orthogonal projections $\{p_v: v\in E^0\}$ and partial isometries $\{s_e : e\in E^1\}$ with mutually orthogonal range projections such that $s_e^*s_e=p_{r(e)}$ and  $s_es_e^*\leq p_{s(e)}$ for all $e\in E^1$. For a finite path $\mu=\mu_0...\mu_{n-1}$ of length $n\geq 1$, we put $s_\mu:=s_{\mu_0}\cdots s_{\mu_{n-1}}$. Also put $s_v:=p_v$ for $v\in E^0$. Then we have the following well-known description of $\T C^*(E)$ in terms of the partial isometries $\{s_\mu :\mu\in E^*\}$:
\[
\T C^*(E)=\overline{\spn}(\{s_\mu s_\nu^* : \mu,\nu\in E^*, r(\mu)=r(\nu)\}).
\]
Following the notation from \cite{CarLar}, we let $E_{\rm reg}^0:=\{v\in E^0 : 0<|vE^1|<\infty\}$ denote the set of \emph{regular vertices}, where $vE^1:=\{e\in E^1 : s(e)=v\}$ is the set of edges emanating from $v$. The \emph{graph C*-algebra} $C^*(E)$ of $E$ is the universal C*-algebra defined with the same generators and relations as $\T C^*(E)$ plus the additional relation
\begin{equation*}
	p_v=\sum_{e\in s^{-1}(v)}s_es_e^*\quad \text{for all }v\in E_{\rm reg}^0.
\end{equation*}
Following \cite{aHLRS15}, we shall denote the projection in $C^*(E)$ corresponding to $v\in E^0$ by $\bar{p}_v$ and the partial isometry in $C^*(E)$ corresponding to $e\in E^1$ by $\bar{s}_e$. There is a surjective *-homomorphism $\T C^*(E)\to C^*(E)$ such that $p_v\mapsto \bar{p}_v$ for all $v\in E^0$ and $s_e\mapsto \bar{s}_e$ for all $e\in E^1$.

The \emph{gauge action} on $\T C^*(E)$ is given by $\alpha^E\colon \Rz\to \Aut(\T C^*(E))$, where for each $t\in\Rz$, $\alpha_t^E(p_v)=p_v$ for all $v\in E^0$ and $\alpha_t(s_\mu)=e^{it|\mu|}s_\mu$ for all $\mu\in E^*\setminus E^0$. When it will not cause confusion, we shall omit the superscript and write $\alpha$ instead of $\alpha^E$. There is also a \emph{gauge action} on $C^*(E)$ given by $\bar{\alpha}\colon\Rz\to\Aut(C^*(E))$, where for each $t\in\Rz$, $\bar{\alpha}_t(\bar{p}_v)=\bar{p}_v$ for all $v\in E^0$ and $\bar{\alpha}_t(\bar{s}_\mu)=e^{it|\mu|}\bar{s}_\mu$ for all $\mu\in E^*\setminus E^0$. The canonical surjection $\T C^*(E)\to C^*(E)$ is clearly $\Rz$-equivariant. We write ``KMS state'' rather than ``$\alpha$-KMS state'' when referring to KMS states of $(\T C^*(E),\alpha)$ (and similarly for the system $(C^*(E),\bar{\alpha})$).

The C*-algebra $\T C^*(E)$ has several groupoid models; the most common such model is given as follows (cf. \cite{Pat}). Let
\[
\G_E:=\{(\mu x,|\mu|-|\nu|,\nu x)\in E^{\leq \infty}\times \Zz\times E^{\leq \infty}: \mu,\nu\in E^*,x\in E^{\leq \infty}, r(\mu)=s(x)=r(\nu)\}
\]
with range and source maps given by $r(x,k,y)=x$ and $s(x,k,y)=y$, respectively, and composition given by $(x,k,y)(y,l,z)=(x,k+l,z)$. For each finite path $\mu\in E^*$, let 
\[
\cZ(\mu):=\{x\in E^{\leq \infty} : x_i=\mu_i \text{ for } 0\leq i\leq |\mu|-1\},
\]
and put $\cZ'(\mu):=\cZ(\mu)\cap E^\infty$.
For each $\mu\in E^*$ and finite subset $F\subseteq r(\mu)E^1$, put $\cZ_F(\mu):=\cZ(\mu)\setminus\bigcup_{e\in F}\cZ(\mu e)$. The sets $\cZ_F(\mu)$ form a basis of compact open subsets for a locally compact Hausdorff topology on $E^{\leq \infty}$. See \cite[Theorem~2.1]{Web} (or \cite[Proposition~2.2]{CarLar}) for more on this topology. Note that the conventions for graphs in \cite{Web} are opposite to the ones used here.

The \'{e}tale topology on $\G_E$ has countable basis given by sets of the form
\[
U_{\mu,\nu}:=\{(\mu x, |\mu|-|\nu|,\nu x) : r(\mu)=s(x)=r(\nu), \mu x\in\cZ_F(\mu), \nu x\in\cZ_{F'}(\nu)\},
\]
where $\mu,\nu\in E^*$ and $F\subseteq r(\mu)E^1, F'\subseteq r(\nu)E^1$ with $F$ and $F'$ both finite. We shall usually identify $E^{\leq \infty}$ with the unit space $\G_E^{(0)}$ of $\G_E$ via $x\mapsto (x,0,x)$.

Consider the continuous $\Rz$-valued 1-cocycle $c\colon\G_E\to \Rz$ defined by $c(x,k,y):=k$, and let $\sigma^c$ be the associated time evolution on $C^*(\G_E)$. 
We shall rely on the following useful description of $(\T C^*(E),\alpha)$.
\begin{proposition}[cf. {\cite[Remark~4.4]{CarLar}}]
	\label{prop:gpoidmodel}
	There is an isomorphism of C*-dynamical systems $(\T C^*(E),\alpha)\cong (C^*(\G_E),\sigma^c)$ such that $s_e\mapsto 1_{U_{e,r(e)}}$ for every $e\in E^1$, and $p_v\mapsto 1_{U_{v,v}}$ for every $v\in E^0$. 
\end{proposition}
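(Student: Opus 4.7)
The plan is to build the isomorphism from left to right via the universal property of $\T C^*(E)$, check $\Rz$-equivariance on generators, and then establish that the resulting *-homomorphism is bijective by combining a density argument with a gauge-invariant uniqueness theorem.

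First, I would verify that the elements $P_v := 1_{U_{v,v}}$ and $S_e := 1_{U_{e,r(e)}}$ of $C_c(\G_E)$ satisfy the defining relations of the graph Toeplitz algebra. Since each $U_{\mu,\nu}$ is an open bisection of $\G_E$, convolution of indicator functions on these sets is especially clean: for composable $\mu,\nu,\rho$ one has $1_{U_{\mu,\nu}} \cdot 1_{U_{\nu,\rho}} = 1_{U_{\mu,\rho}}$ and $1_{U_{\mu,\nu}}^* = 1_{U_{\nu,\mu}}$. From these identities I would read off: $P_v = P_v^* = P_v^2$; $P_v P_w = 0$ for $v \neq w$ because $\cZ(v) \cap \cZ(w) = \emptyset$ under the identification $E^{\leq\infty} \cong \G_E^{(0)}$; $S_e^* S_e = 1_{U_{r(e), r(e)}} = P_{r(e)}$; and $S_e S_e^* = 1_{U_{e,e}}$ which, viewed inside the unit space, is the indicator of $\cZ(e) \subseteq \cZ(s(e))$, yielding $S_e S_e^* \leq P_{s(e)}$ and pairwise orthogonality of range projections for distinct edges. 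The universal property of $\T C^*(E)$ then delivers a *-homomorphism $\Phi \colon \T C^*(E) \to C^*(\G_E)$ sending $p_v \mapsto P_v$ and $s_e \mapsto S_e$.

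Second, I would verify $\Rz$-equivariance. For a finite path $\mu \in E^*$, one computes $\Phi(s_\mu) = 1_{U_{\mu, r(\mu)}}$ by induction on $|\mu|$ using the composability identity from Step 1; more generally $\Phi(s_\mu s_\nu^*) = 1_{U_{\mu,\nu}}$ when $r(\mu) = r(\nu)$. Since $c$ takes the constant value $|\mu| - |\nu|$ on $U_{\mu,\nu}$, one gets $\sigma^c_t(1_{U_{\mu,\nu}}) = e^{it(|\mu|-|\nu|)} 1_{U_{\mu,\nu}}$, which matches $\Phi(\alpha_t(s_\mu s_\nu^*))$. Thus $\sigma^c_t \circ \Phi = \Phi \circ \alpha_t$ on generators, and so everywhere by continuity and linearity.

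Third, I would establish that $\Phi$ is an isomorphism. For surjectivity, note that the basic open bisections $U_{\mu,\nu}$ and their ``cut-down'' variants indexed by finite $F \subseteq r(\mu)E^1, F' \subseteq r(\nu)E^1$ give a basis for the \'etale topology on $\G_E$, and each such indicator is realised as $\Phi$ applied to $s_\mu s_\nu^* - \sum_{e \in F} s_{\mu e} s_{\nu e}^*$ (or an analogous finite-sum-and-difference expression); hence $\img \Phi$ contains $C_c(\G_E)$ and is dense. For injectivity I would invoke the gauge-invariant uniqueness theorem for Toeplitz--Cuntz--Krieger algebras \cite[Theorem~4.1]{FR}: $\Phi$ is $\Rz$-equivariant, every $\Phi(p_v) = 1_{\cZ(v)} \neq 0$, and each projection $\Phi(p_v - \sum_{e \in F} s_e s_e^*) = 1_{\cZ_F(v)}$ is nonzero for every finite $F \subseteq vE^1$ (this sum is nonzero because $v \in E^{\leq\infty}$ sits in $\cZ_F(v)$). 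The main obstacle is the bookkeeping in the density argument: one must see that the algebraic image of $\Phi$ exhausts $C_c$ on every basic bisection $U_{\mu,\nu}$ cut down by the finite sets $F, F'$, which requires writing these indicators as explicit inclusion--exclusion expressions in the Toeplitz generators. Once this is done, both injectivity and surjectivity are immediate, and equivariance from Step 2 upgrades the *-isomorphism to an isomorphism of C*-dynamical systems.
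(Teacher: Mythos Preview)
The paper does not actually prove this proposition: it is stated with a reference to \cite[Remark~4.4]{CarLar} and treated as a known groupoid model for $\T C^*(E)$, with no argument supplied. Your proof is correct and is the standard way one would establish such an isomorphism---verify the Toeplitz--Cuntz--Krieger relations for the candidate generators in $C_c(\G_E)$, invoke universality, check equivariance on the spanning set $\{s_\mu s_\nu^*\}$, obtain surjectivity by realising indicators of basic compact-open bisections as images of Toeplitz elements, and deduce injectivity from gauge-invariant uniqueness using that each vertex $v$ (as a length-$0$ path) lies in every $\cZ_F(v)$. One small remark: the gauge-invariant uniqueness theorem of \cite{FR} is phrased for the $\Tz$-action, so you should note that both $\alpha$ and $\sigma^c$ factor through $\Tz$ (since $c$ is integer-valued), which makes the hypothesis apply. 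With that minor caveat, your argument supplies a complete proof of what the paper takes for granted.
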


The graph C*-algebra, $C^*(E)$, of $E$ also has a groupoid model: There is a canonical isomorphism $C^*(E)\cong C^*(\cG_E\vert_{\partial E})$, where $\partial E\subseteq E^{\leq \infty}$ is defined by $\partial E:=E^\infty\cup\{\mu\in E^* : r(\mu)\not\in E_{\rm reg}\}$.

\begin{remark}
	To simplify things, we shall say that a measure $m$ on $E^{\leq \infty}$ is \emph{$e^\beta$-conformal} if it is quasi-invariant with Radon-Nikodym cocycle $e^{-\beta c}$ (see \cite[Section~3]{Th14}).
\end{remark}

\subsection{Extremal type I KMS states arising from vertices, and partition functions}
\label{ssec:toeplitz}

Let $E=(E^0,E^1,r,s)$ be a countable directed graph. Let $v\in E^0$. If $E^nv:=\{\mu\in E^n : r(\mu)=v\}$ is finite for all $n$, then we obtain the (generalised) Dirichlet series 
\[
Z_v^E(s):=\sum_{\mu\in E^*v}e^{-s|\mu|}=\sum_{n=0}^\infty |E^nv|e^{-s n}, \quad s\in\Cz.
\] 

\begin{remark}
When it will not cause confusion, we will drop the superscript and simply write $Z_v(s)$. 
The function $Z_v(s)$ is called the \emph{partition function with fixed-target $v$} in \cite{CarLar} (see \cite[Equation~5.8]{CarLar}). This terminology originates in \cite[Defintion~9.3]{ExLa} in the study of Cuntz--Krieger algebras.
\end{remark}

We will follow the notation from \cite{CarLar}, and let $\beta_v:=\inf(\{\beta\in \Rz : Z_v(\beta)<\infty\})$, so that $Z_v(\beta)<\infty$ for every $\beta>\beta_v$.
If $\beta\in\Rz^*$ is such that $Z_v(\beta)<\infty$, then we define the probability measure $m_{v,\beta}$ on $E^{\leq\infty}$ by
\[
m_{v,\beta}:=\frac{1}{Z_v(\beta)}\sum_{\mu\in E^*v}e^{-\beta |\mu|}\delta_\mu.
\]

In order to use our results from Section~\ref{sec:pfncs&gpoids}, we shall need the following, likely well-known, easy lemma.

\begin{lemma}
\label{lem:cFprin}
The groupoid $c^{-1}(0)$ is principal.
\end{lemma}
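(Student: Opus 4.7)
The plan is to unpack the definition of principal groupoid directly and verify that the isotropy of $c^{-1}(0)$ at every unit is trivial, relying only on the concrete description of $\G_E$ given just before the lemma.

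Recall that elements of $\G_E$ have the form $\gamma = (\mu y,|\mu|-|\nu|,\nu y)$ with $\mu,\nu\in E^*$, $y\in E^{\leq\infty}$, and $r(\mu)=s(y)=r(\nu)$. I would fix a unit $x\in E^{\leq\infty}$ (identified with $(x,0,x)\in\G_E^{(0)}$) and consider an arbitrary $\gamma\in c^{-1}(0)$ with $r(\gamma)=s(\gamma)=x$. Write $\gamma=(\mu y,|\mu|-|\nu|,\nu y)$ as above. The hypothesis $c(\gamma)=0$ gives $|\mu|=|\nu|$, while $r(\gamma)=s(\gamma)=x$ yields $\mu y=\nu y=x$.

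The key step is then the observation that $\mu$ and $\nu$ are both initial segments of the same path $x$ (if $x$ is infinite this is immediate; if $x$ is finite then $|x|=|\mu|+|y|=|\nu|+|y|$, so $\mu$ and $\nu$ are genuine prefixes of $x$) of identical length $|\mu|=|\nu|$. Comparing edge by edge forces $\mu=\nu$. Hence $\gamma=(\mu y,0,\mu y)=(x,0,x)$, the unit at $x$, which is exactly the statement that the isotropy of $c^{-1}(0)$ at $x$ is trivial. Since $x\in \G_E^{(0)}$ was arbitrary, $c^{-1}(0)$ is principal.

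There is no real obstacle: the only point to be careful about is treating finite and infinite paths uniformly when arguing that $\mu$ and $\nu$ are prefixes of $x$, but this is immediate from the definition of concatenation in $E^{\leq\infty}$.
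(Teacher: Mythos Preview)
Your proof is correct and follows the same approach as the paper's: show directly that the isotropy of $c^{-1}(0)$ at each unit is trivial. The paper's version is slightly slicker because it observes that an element of $\G_E$ is literally the triple $(x,n,y)$, so an isotropy element at $x$ is already of the form $(x,n,x)$ and $c(x,n,x)=n=0$ finishes immediately; your detour through showing $\mu=\nu$ is correct but unnecessary, since once $\mu y=\nu y=x$ and $|\mu|-|\nu|=0$ you already have $\gamma=(x,0,x)$ as a triple.
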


\begin{proof}
Let $x\in E^{\leq\infty}$. Any element of the isotropy group at $x$ is of the form $(x,n,x)$ for some $n\in \Zz$. Now $c((x,n,x))=0$ if and only if $n=0$, so $c^{-1}(0)\cap (\G_E)_x^x=\{(x,0,x)\}$, that is, $c^{-1}(0)$ is principal.
\end{proof}

The proof of the following lemma is routine (cf. Remark~\ref{rmk:consistent}), so we omit it.
\begin{lemma}
\label{lem:KMSv}
Let $v\in E^0$. For each $\beta>\beta_v$, 

\begin{enumerate}[\upshape(i)]
	\item the orbit $\G_Ev=E^*v$ is $\beta$-summable;
	\item $m_{\G_Ev}=m_{v,\beta}$, so that $m_{v,\beta}$ is $e^\beta$-conformal;
	\item $Z_{\G_Ev}(s)=Z_v(s)$.
\end{enumerate}
Moreover, the KMS$_\beta$ state on $\T C^*(E)$ associated with the $\beta$-summable orbit $E^*v$ via Proposition~\ref{prop:gpoidmodel} and equation \eqref{eqn:measurefromorbit} is given by $\phi_{v,\beta}:=m_{v,\beta}\circ\Phi$, where $\Phi$ is the canonical conditional expectation from $\T C^*(E)$ onto $C_0(E^{\leq\infty})$.
\end{lemma}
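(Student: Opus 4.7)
The plan is to directly unwind the definitions of orbit, $\beta$-summability, and the associated measure from Section~\ref{sec:betasummable} for the specific groupoid $\mathcal{G}_E$ and the unit $v \in E^0 \hookrightarrow E^{\leq \infty}$. The key computational step is identifying the orbit $\mathcal{G}_E v$ together with the ``path'' elements realising it, and noting that the isotropy at $v$ is trivial.

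First I would compute the orbit explicitly. Under the identification $E^{\leq \infty} \ni x \mapsto (x,0,x) \in \mathcal{G}_E^{(0)}$, the vertex $v$ is the unit $(v,0,v)$. An element of $s^{-1}(v)$ has the form $(x,k,v)$, which by definition of $\mathcal{G}_E$ admits a factorisation $x = \mu y$, $v = \nu y$ with $\mu,\nu\in E^*$, $y \in E^{\leq\infty}$, $r(\mu)=s(y)=r(\nu)$, and $k = |\mu|-|\nu|$. Since $v$ has length $0$, the equation $v = \nu y$ forces $\nu = v$ and $y = v$, whence $x = \mu v$ with $r(\mu)=v$, so $x = \mu \in E^*v$ and $k = |\mu|$. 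Therefore $\mathcal{G}_E v = E^*v$, and for each $\mu \in E^*v$ the set $(\mathcal{G}_E)_v^\mu$ is the singleton $\{(\mu,|\mu|,v)\}$. In particular $(\mathcal{G}_E)_v^v = \{(v,0,v)\}$, so the orbit is automatically consistent. Taking $\gamma_\mu := (\mu,|\mu|,v)$ for each $\mu \in E^*v$ gives $l_v(\mu) = e^{-c(\gamma_\mu)} = e^{-|\mu|}$.

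Parts (i)--(iii) now follow by direct substitution. For (i), $\sum_{\mu \in E^*v} l_v(\mu)^\beta = \sum_{\mu \in E^*v} e^{-\beta|\mu|} = Z_v(\beta) < \infty$ for $\beta > \beta_v$. For (iii), since $c(\gamma_\mu) = |\mu| \geq 0$ attains its minimum $\lambda_1 = 0$ at $\mu = v$, Lemma~\ref{lem:series} gives $Z_{\mathcal{G}_E v}(s) = \sum_{\mu \in E^*v} e^{-s|\mu|} = Z_v(s)$. For (ii), plugging into \eqref{eqn:measurefromorbit} yields
\[
m_{\mathcal{G}_E v} = \frac{1}{Z_v(\beta)} \sum_{\mu \in E^*v} e^{-\beta|\mu|} \delta_\mu = m_{v,\beta};
\]
that $m_{v,\beta}$ is $e^\beta$-conformal is then automatic from membership in $\mathrm{QI}_\beta(\mathcal{G}_E,c)$ (i.e., the conformality condition is built into the definition of $m_O$).

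For the final sentence, I would invoke Proposition~\ref{prop:gpoidmodel} to transport $\varphi_O = m_O \circ \Phi$ from $C^*(\mathcal{G}_E)$ to $\mathcal{T}C^*(E)$: under this isomorphism the canonical conditional expectation onto $C_0(\mathcal{G}_E^{(0)}) = C_0(E^{\leq\infty})$ becomes precisely $\Phi$ on $\mathcal{T}C^*(E)$, hence the $\mathrm{KMS}_\beta$ state corresponding to $O = E^*v$ is $m_{v,\beta} \circ \Phi = \phi_{v,\beta}$. There is no real obstacle: the only point requiring care is the verification that the isotropy at $v$ is trivial, which forces $\nu = v$ in the factorisation above. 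Everything else is bookkeeping.
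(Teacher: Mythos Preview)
Your proof is correct and is precisely the routine verification the paper has in mind: the paper omits the proof entirely, saying only ``The proof of the following lemma is routine (cf.\ Remark~\ref{rmk:consistent}), so we omit it.'' Your explicit identification of $s^{-1}(v)$, the triviality of the isotropy at $v$, and the resulting formula $l_v(\mu)=e^{-|\mu|}$ are exactly the computations being swept under the rug, and the reference to Remark~\ref{rmk:consistent} aligns with your observation that consistency is automatic here.
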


We will now prove that if $\beta$ is positive, then every $\beta$-summable orbit arises from a vertex, that is, is of the form $E^*v$ (see Theorem~\ref{thm:Toeplitz} below). We begin by making an observation about $\beta$-summability for orbits of infinite paths.

\begin{lemma}
\label{lem:xnotsummable}
Let $\beta\in(0,\infty)$. If $x=x_0x_1...\in E^\infty$, then the orbit $[x]:=\G_Ex\subseteq E^{\leq\infty}$ is not $\beta$-summable.
\end{lemma}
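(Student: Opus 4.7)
The strategy is to exhibit, inside the orbit $[x]$, infinitely many distinct points whose $l_x$-values grow like $e^n$; this forces the $\beta$-summability series to diverge whenever $\beta>0$. The natural such points are the shifted infinite paths $\sigma^n(x):=x_nx_{n+1}\ldots$ for $n\geq 0$. Since $\beta$-summability is only defined for consistent orbits, the proof naturally splits into the eventually-periodic and aperiodic cases.

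First I would dispose of the case in which $x$ is eventually periodic, say $x=\mu w^\infty$ with $w\in E^*$ a cycle of length $|w|\geq 1$ based at $s(w)=r(\mu)$. Setting $z:=w^\infty$ produces two decompositions $x=\mu z=(\mu w)z$ with $r(\mu)=r(\mu w)=s(z)$, yielding an element $(x,-|w|,x)\in (\G_E)_x^x$ on which $c$ takes the nonzero value $-|w|$. Hence $[x]$ fails to be consistent, so it is not $\beta$-summable by definition of $\cO_\beta(\G_E,c)$.

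In the aperiodic case, consistency of $[x]$ is essentially automatic: any $(x,k,x)\in(\G_E)_x^x$ arises from $x=\mu z=\nu z$ with $r(\mu)=r(\nu)=s(z)$, and after assuming WLOG $|\mu|\geq|\nu|$, writing $\mu=\nu\mu'$ forces $z=\mu' z=(\mu')^\infty$, contradicting aperiodicity of $x$ unless $\mu'$ is a vertex, i.e.\ $k=0$. With $[x]$ consistent, $l_x$ is well defined; for each $n\geq 0$, choosing $\mu:=s(x_n)$, $\nu:=x_0\cdots x_{n-1}$ and $w:=\sigma^n(x)$ exhibits the element $(\sigma^n(x),-n,x)\in\G_E$ with $r(\sigma^n(x),-n,x)=\sigma^n(x)$ and $s(\sigma^n(x),-n,x)=x$, so
\[
l_x(\sigma^n(x))=e^{-c(\sigma^n(x),-n,x)}=e^{n}.
\]
Aperiodicity guarantees that the paths $\sigma^n(x)$ are pairwise distinct in $E^\infty\subseteq E^{\leq\infty}$, so for $\beta>0$ we obtain the lower bound
\[
\sum_{y\in[x]}l_x(y)^\beta\;\geq\;\sum_{n\geq 0}l_x(\sigma^n(x))^\beta\;=\;\sum_{n\geq 0}e^{n\beta}\;=\;\infty,
\]
completing the proof.

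The argument is essentially an elementary cocycle computation; the only point demanding a moment's care is the split into periodic and aperiodic cases, needed because $l_x$ is defined only when the orbit is consistent. Once consistency is in hand, aperiodicity simultaneously ensures the shifts $\sigma^n(x)$ are genuinely distinct and supplies the exponential lower bound on $l_x$ that destroys summability for every positive $\beta$.
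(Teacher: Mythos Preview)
Your proof is correct and follows the same strategy as the paper: exhibit the shifted tails $\sigma^n(x)$, compute $l_x(\sigma^n(x))=e^n$ via the groupoid element $(\sigma^n(x),-n,x)$, and conclude that the summability series dominates $\sum_n e^{n\beta}=\infty$. The paper's proof leaves the consistency/aperiodicity verification implicit (since $\beta$-summable presupposes consistent), whereas you spell out the eventually-periodic case and the distinctness of the $\sigma^n(x)$ explicitly; this makes your version slightly more self-contained but is not a genuinely different route.
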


\begin{proof}
For each $n\geq 0$, let $x[n,\infty]:=x_nx_{n+1}...\in[x]$, let $x[0,n-1]:=x_0,...,x_{n-1}$, and let $\gamma_n:=(x[n,\infty],-n,x) \in\G_E$. Then we have $r(\gamma_n)=x[n,\infty]$, $s(\gamma_n)=x$, and $c(\gamma_n)=-n$. 
If $[x]$ were $\beta$-summable, then we would have
\[
\sum_{n=1}^\infty e^{-\beta c(\gamma_n)}\leq \sum_{\gamma\in [x]}l_z(\gamma)^\beta<\infty,
\]
but the sum on the left hand side of this equation diverges since $-\beta c(\gamma_n)=\beta n\geq 0$.
\end{proof}

For $\beta\in\Rz$, we shall follow the notation from \cite[Proposition~5.8]{CarLar} and let $E^0_{\beta\textup{-reg}}:=\{v\in E^0 : Z_v(\beta)<\infty\}$ be the set of \emph{$\beta$-regular vertices}. The parameterisation given by part (i) in the next result could also be derived from \cite[Theorem~5.6]{CarLar}; however, we give an alternative proof here using our general results from Section~ \ref{sec:summable}. The main advantage of our approach is that it allows us to compute all partition functions of the C*-dynamical system $(\T C^*(E),\alpha)$, which is one of our primary objectives. As a consequence of our computation, we see that the (abstract) partition functions of $(\T C^*(E),\alpha)$ in the sense of Definition~\ref{def:partitionfnc} (cf. \cite{BLT}) agree with the so-called ``fixed-target'' partition functions from \cite[Section~5]{CarLar}. We point out that the (abstract) notion of partition function from \cite{BLT} was not available when \cite{CarLar} was written.

\begin{theorem}
	\label{thm:Toeplitz}
	Let $E$ be a countable directed graph, and let $\alpha$ be the gauge action on $\T C^*(E)$.
	\begin{enumerate}[\upshape(i)]
		\item For each $\beta\in(0,\infty)$, the map
		\begin{equation}
		\label{eqn:verticestoKMS}
		E^0_{\beta\textup{-reg}}\to \E_{\rm I}(\KMS_\beta(\T C^*(E), \alpha)), \quad  v\mapsto \phi_{v,\beta},
		\end{equation}
		is a bijection  (cf. \cite[Theorem~5.6]{CarLar});
		\item for each $v\in E^0_{\beta\textup{-reg}}$, the partition function of $\phi_{v,\beta}$ is equal to $Z_v(s)$.
	\end{enumerate}
\end{theorem}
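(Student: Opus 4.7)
The plan is to reduce everything to the groupoid picture and then invoke the general results of Section~\ref{sec:pfncs&gpoids}. By Proposition~\ref{prop:gpoidmodel} I identify $(\T C^*(E),\alpha)$ with $(C^*(\G_E),\sigma^c)$, where $c$ is the length cocycle. Lemma~\ref{lem:cFprin} shows that $c^{-1}(0)$ is principal, which places us in the hypotheses of Proposition~\ref{prop:main}. For each $\beta\in(0,\infty)$ this yields a canonical bijection
\[
\cO_\beta(\G_E,c)\;\xrightarrow{\;\cong\;}\;\E_{\rm I}(\KMS_\beta(\T C^*(E),\alpha)),\qquad O\mapsto \phi_O.
\]
So the content of (i) is to identify $\cO_\beta(\G_E,c)$ with $E^0_{\beta\text{-reg}}$ in such a way that $\phi_{E^*v}=\phi_{v,\beta}$.

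For this identification, I would first observe that every $\G_E$-orbit in $E^{\leq\infty}$ is of exactly one of two kinds: the set $E^*v$ of finite paths terminating at a vertex $v\in E^0$, or the tail-equivalence class $[x]$ of an infinite path $x\in E^\infty$; orbits of the first kind are in obvious bijection with $E^0$ (distinct vertices give distinct orbits, since $E^*v\cap\G_E^{(0)}\cap E^0=\{v\}$). Lemma~\ref{lem:xnotsummable} rules out infinite-path orbits for $\beta>0$, while Lemma~\ref{lem:KMSv}(i) asserts that $E^*v$ is $\beta$-summable precisely when $v\in E^0_{\beta\text{-reg}}$. Combining these two facts gives a bijection $E^0_{\beta\text{-reg}}\to\cO_\beta(\G_E,c)$, $v\mapsto E^*v$. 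Finally, Lemma~\ref{lem:KMSv}(ii) shows that the measure $m_{\G_E v}$ produced by \eqref{eqn:measurefromorbit} is exactly $m_{v,\beta}$, so that $\phi_{E^*v}=m_{v,\beta}\circ\Phi=\phi_{v,\beta}$. This proves (i).

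For (ii), I would apply Theorem~\ref{thm:irredrep} to the $\beta$-summable orbit $O=E^*v$, taking the distinguished basepoint to be $v\in\G_E^{(0)}$ itself. The required isotropy condition $(\G_E)_v^v=\{v\}$ holds trivially: any element of $(\G_E)_v^v$ has the form $(v,k,v)$ with $v=\mu x=\nu x$ and $|\mu|-|\nu|=k$, but $v$ having length $0$ forces $\mu=\nu=v$ and hence $k=0$. Theorem~\ref{thm:irredrep} then provides an explicit admissible triple for $\phi_{v,\beta}$ and asserts that its partition function equals $Z_O(s)$, which by Lemma~\ref{lem:KMSv}(iii) is exactly $Z_v(s)$.

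Conceptually there is no genuine obstacle once the groupoid framework from Section~\ref{sec:pfncs&gpoids} is in place; the proof is essentially an assembly of the preparatory lemmas. The only points requiring mild care are verifying that infinite-path orbits really cannot contribute in the $\beta>0$ regime (handled by Lemma~\ref{lem:xnotsummable}) and that the formulas in \eqref{eqn:measurefromorbit} and in Theorem~\ref{thm:irredrep} specialise to the concrete expressions $m_{v,\beta}$ and $Z_v(s)$ (handled by Lemma~\ref{lem:KMSv}).
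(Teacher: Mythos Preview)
Your proof is correct and follows essentially the same route as the paper: invoke Proposition~\ref{prop:main} via Lemma~\ref{lem:cFprin}, classify the orbits in $E^{\leq\infty}$ using Lemma~\ref{lem:xnotsummable} and Lemma~\ref{lem:KMSv}, and then read off the partition function from Theorem~\ref{thm:irredrep}. Your argument is in fact slightly more explicit than the paper's in two places---you spell out why distinct vertices give distinct orbits and you verify the trivial-isotropy hypothesis $(\G_E)_v^v=\{v\}$ needed for Theorem~\ref{thm:irredrep}---but these are minor elaborations, not a different strategy.
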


\begin{proof}
By Lemma~\ref{lem:cFprin}, $c^{-1}(0)$ is principal, so that Proposition~\ref{prop:main} gives us a bijection 
\[
\cO_\beta(\G_E,c)\to \E_{\rm I}(\KMS_\beta(\T C^*(E), \alpha)), \quad O\mapsto \phi_O.
\]
By Lemma~\ref{lem:xnotsummable}, no orbit coming from $E^\infty$ is $\beta$-summable. Each orbit in $E^*$ is of the form $E^*v$, and as we have already observed, such an orbit is $\beta$-summable if and only if $Z_v(\beta)<\infty$. Since $\varphi_{E^*v}=\varphi_{v,\beta}$ (see Lemma~\ref{lem:KMSv}), we see that the map given in \eqref{eqn:verticestoKMS} is a bijection.

The claim about partition functions follows from Lemma~\ref{lem:KMSv} combined with Theorem~\ref{thm:irredrep}.
\end{proof}

As an immediate consequence, we obtain:
\begin{corollary}
	\label{cor:Toeplitz}
	Let $E$ and $F$ be countable directed graphs, and let $\alpha^E$ and $\alpha^F$ denote the gauge actions on $\cT C^*(E)$ and $\cT C^*(F)$, respectively. If there is an isomorphism of C*-dynamical systems $(\cT C^*(E),\alpha^E)\cong (\cT C^*(F),\alpha^F)$, then, for each $\beta\in (0,\infty)$, there is a bijection $E^0_{\beta\textup{-reg}}\cong F^0_{\beta\textup{-reg}}$, $v\mapsto v'$, such that $Z_v^E(s)=Z_{v'}^F(s)$ for all $v\in E^0_{\beta\textup{-reg}}$.
\end{corollary}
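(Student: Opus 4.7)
The plan is to show that the isomorphism of C*-dynamical systems induces a partition-function-preserving bijection on the type I extremal KMS$_\beta$ states and then transport this bijection to the vertex sets using Theorem~\ref{thm:Toeplitz}.

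First I would fix an isomorphism of C*-dynamical systems $\theta\colon (\cT C^*(E),\alpha^E)\overset{\cong}{\to}(\cT C^*(F),\alpha^F)$ and observe that pull-back along $\theta$ yields an affine homeomorphism
\[
\theta^*\colon \KMS_\beta(\cT C^*(F),\alpha^F)\overset{\cong}{\to}\KMS_\beta(\cT C^*(E),\alpha^E),\quad \phi'\mapsto \phi'\circ\theta,
\]
for every $\beta\in(0,\infty)$. Since $\theta$ is a *-isomorphism intertwining the time evolutions, the GNS representation of $\phi'\circ\theta$ is unitarily equivalent to $\pi_{\phi'}\circ\theta$, so the enveloping von Neumann algebras are *-isomorphic. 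In particular, $\theta^*$ preserves the factor property and the type of the factor. Hence $\theta^*$ restricts to a bijection between the subsets $\E_{\rm I}(\KMS_\beta(\cT C^*(F),\alpha^F))$ and $\E_{\rm I}(\KMS_\beta(\cT C^*(E),\alpha^E))$.

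Next I would combine this with the bijection from Theorem~\ref{thm:Toeplitz}(i) applied to both $E$ and $F$. Concretely, I would define the desired bijection $E^0_{\beta\text{-reg}}\to F^0_{\beta\text{-reg}}$, $v\mapsto v'$, as the composition
\[
E^0_{\beta\text{-reg}}\overset{\ref{thm:Toeplitz}(i)}{\longrightarrow}\E_{\rm I}(\KMS_\beta(\cT C^*(E),\alpha^E))\overset{(\theta^*)^{-1}}{\longrightarrow}\E_{\rm I}(\KMS_\beta(\cT C^*(F),\alpha^F))\overset{\ref{thm:Toeplitz}(i)}{\longleftarrow}F^0_{\beta\text{-reg}},
\]
where the last arrow is inverted. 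Explicitly, $v'$ is the unique element of $F^0_{\beta\text{-reg}}$ with $\phi_{v,\beta}^E=\phi_{v',\beta}^F\circ\theta$.

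Finally, for the equality of partition functions, I would use that the partition function is an invariant of the quasi-equivalence class of an extremal type I KMS state (cf.\ \cite[Remark~2.6]{BLT}, or directly Proposition~\ref{prop:adTripleUnique}): if $(\pi',\cH',\rho')$ is an admissible triple for $\phi_{v',\beta}^F$, then $(\pi'\circ\theta,\cH',\rho')$ is an admissible triple for $\phi_{v,\beta}^E$ with the \emph{same} represented Hamiltonian, hence the same partition function. Combining this with Theorem~\ref{thm:Toeplitz}(ii), which identifies the partition function of $\phi_{v,\beta}^E$ with $Z_v^E(s)$ and that of $\phi_{v',\beta}^F$ with $Z_{v'}^F(s)$, yields $Z_v^E(s)=Z_{v'}^F(s)$ for all $v\in E^0_{\beta\text{-reg}}$. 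The only subtlety is checking that the admissible triple construction is natural with respect to *-isomorphisms intertwining the time evolution, but this is immediate from the uniqueness clause of Proposition~\ref{prop:adTripleUnique}, so no real obstacle appears; the corollary is essentially a formal consequence of Theorem~\ref{thm:Toeplitz}.
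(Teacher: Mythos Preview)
Your proposal is correct and matches the paper's approach: the paper states the corollary as an immediate consequence of Theorem~\ref{thm:Toeplitz} without giving a separate proof, and what you have written is precisely the routine unpacking of why it follows---an equivariant isomorphism transports extremal type~I KMS$_\beta$ states and their admissible triples, hence their partition functions, and Theorem~\ref{thm:Toeplitz} converts this into the claimed bijection on $\beta$-regular vertices.
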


\begin{remark}
The equality $Z_v^E(s)=Z_{v'}^F(s)$ is equivalent to $|E^nv|=|F^nv'|$ for every $n\geq 0$.
\end{remark}

Our analysis of $(\T C^*(E),\alpha)$ has the following implication for the C*-dynamical system $(C^*(E),\bar{\alpha})$:

\begin{theorem}
	\label{thm:graph}
	Let $E$ be a countable directed graph, and let $\bar{\alpha}$ denote the gauge action on $C^*(E)$.
	\begin{enumerate}[\upshape(i)]
		\item For each $\beta\in(0,\infty)$, the map
		\[
		E^0_{\beta\textup{-reg}}\setminus E_{\rm reg}^0\to \E_{\rm I}(\KMS_\beta(C^*(E), \bar{\alpha})), \quad \ v\mapsto \bar{\phi}_{v,\beta},
		\]
		is a bijection, where $\bar{\phi}_{v,\beta}$ is the KMS$_\beta$ state on $C^*(E)$ associated with $m_{v,\beta}$;
		\item for each $v\in E^0_{\beta\textup{-reg}}\setminus E_{\rm reg}^0$, the partition function of $\bar{\phi}_{v,\beta}$ is equal to $Z_v(s)$.
	\end{enumerate}
\end{theorem}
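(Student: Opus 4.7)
The plan is to apply our general result on groupoid C*-algebras (Proposition~\ref{prop:main} and Theorem~\ref{thm:irredrep}) directly to the restricted groupoid $\G_E|_{\partial E}$, using the canonical isomorphism $C^*(E)\cong C^*(\G_E|_{\partial E})$. This reduces the proof to identifying which $\beta$-summable orbits of $\G_E|_{\partial E}$ exist, and then reading off the partition function from the explicit formula in Theorem~\ref{thm:irredrep}.

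\emph{Reduction to the groupoid setting.} Let $c$ also denote the restriction to $\G_E|_{\partial E}$ of the canonical cocycle on $\G_E$. Since the kernel of the restricted cocycle is contained in $c^{-1}(0)$, and the latter is principal by Lemma~\ref{lem:cFprin}, the kernel of the restricted cocycle on $\G_E|_{\partial E}$ is principal as well. Proposition~\ref{prop:main} then yields a bijection
\[
\cO_\beta(\G_E|_{\partial E},c) \overset{\cong}{\to} \E_{\rm I}(\KMS_\beta(C^*(E),\bar{\alpha})), \quad O \mapsto \bar{\phi}_O,
\]
for each $\beta\in(0,\infty)$.

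\emph{Identifying the summable orbits.} The orbits of $\G_E|_{\partial E}$ are exactly the nonempty intersections $O \cap \partial E$ for $O$ ranging over the orbits of $\G_E$ on $E^{\leq\infty}$. By Lemma~\ref{lem:xnotsummable}, no orbit arising from an infinite path is $\beta$-summable for $\beta>0$, so one may restrict attention to orbits of the form $E^*v$, $v\in E^0$. Since every $\mu\in E^*v$ satisfies $r(\mu)=v$, the intersection $E^*v\cap\partial E$ equals $E^*v$ when $v\notin E_{\rm reg}^0$ and is empty otherwise (the key point being that the length-zero path $v\in E^*v$ lies in $\partial E$ precisely when $v\notin E_{\rm reg}^0$). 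By Lemma~\ref{lem:KMSv}(i), the orbit $E^*v$ is $\beta$-summable if and only if $v\in E^0_{\beta\text{-reg}}$, so the $\beta$-summable orbits of $\G_E|_{\partial E}$ are parametrised bijectively by $v\in E^0_{\beta\text{-reg}}\setminus E_{\rm reg}^0$, yielding (i) once the associated KMS state is identified with $\bar\phi_{v,\beta}$.

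\emph{Computing the partition function.} By Lemma~\ref{lem:KMSv}(ii), the measure attached to the orbit $E^*v$ via \eqref{eqn:measurefromorbit} is $m_{v,\beta}$, so the corresponding KMS state is $m_{v,\beta}\circ\bar{\Phi}=\bar{\phi}_{v,\beta}$, where $\bar{\Phi}$ is the canonical conditional expectation $C^*(E)\to C_0(\partial E)$. Theorem~\ref{thm:irredrep} applied to $\G_E|_{\partial E}$ with the orbit $E^*v$ then gives the partition function $Z_{\bar{\phi}_{v,\beta}}(s)=Z_{E^*v}(s)$, which equals $Z_v(s)$ by Lemma~\ref{lem:KMSv}(iii). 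No serious obstacle is anticipated; the only subtlety is the correct bookkeeping for the length-zero path $v\in E^*v$ when checking whether a finite-path orbit lies inside $\partial E$.
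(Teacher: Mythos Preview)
Your proposal is correct and follows essentially the same approach as the paper. The paper's proof reads in its entirety ``The proof is almost identical to that of Theorem~\ref{thm:Toeplitz}'', and you have spelled out precisely the adaptation that is implicit in that sentence: replace $\G_E$ by the restriction $\G_E|_{\partial E}$, observe that the restricted cocycle still has principal kernel, and then track which orbits of $\G_E$ actually lie in $\partial E$ (namely $E^*v$ with $v\notin E^0_{\rm reg}$, since $\partial E$ is $\G_E$-invariant and a finite-path orbit $E^*v$ is entirely inside or entirely outside $\partial E$ according to whether $r(\mu)=v$ is regular).
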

\begin{proof}
	The proof is almost identical to that of Theorem~\ref{thm:Toeplitz}.
\end{proof}

Our next result shows that when $E^0_{\beta\textup{-reg}}$ coincides with $E^0$, and the function $v\mapsto Z_v(\beta)$ is in $\ell^\infty(E^0)$, then all extremal  KMS$_\beta$ states come from vertices in $E$. 

\begin{theorem}
	\label{thm:KMSwithgrowthcond}
	Let $E$ be a countable directed graph, let $\alpha$ be the gauge action on $\T C^*(E)$, and fix $\beta\in (0,\infty)$. If $\sup_{v\in E^0}Z_v(\beta)<\infty$, then every extremal KMS$_{\beta}$ state on $\T C^*(E)$ is of type I, and the map
	\[
	E^0\to \E(\KMS_{\beta}(\T C^*(E), \alpha)), \quad \ v\mapsto \phi_{v,\beta},
	\]
	is a bijection. 
\end{theorem}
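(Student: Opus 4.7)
The plan is to pass, via Proposition~\ref{prop:KumRen} and Lemma~\ref{lem:cFprin}, from KMS$_\beta$ states to $e^\beta$-conformal probability measures on $E^{\leq\infty}$, and to show that under the growth assumption every extremal such measure (equivalently, every ergodic one, by Proposition~\ref{prop:QIprops} and Remark~\ref{rmk:extreme}) is concentrated on an orbit $E^*v$ for some $v\in E^0$. Once this is established, it will combine with Lemma~\ref{lem:KMSv}, Proposition~\ref{prop:gns}, and Theorem~\ref{thm:Toeplitz} to yield both the type~I conclusion and the bijectivity of $v\mapsto\phi_{v,\beta}$.

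The central computation will be as follows. For an $e^\beta$-conformal probability measure $m$ on $E^{\leq\infty}$, applying the Radon--Nikodym cocycle condition to the bisection $U_{\mu,r(\mu)}$ gives
\[
m(\cZ(\mu))=e^{-\beta|\mu|}\,p_{r(\mu)},\qquad \text{where }p_v:=m(\cZ(v)).
\]
Since $\{\cZ(v):v\in E^0\}$ forms a Borel partition of $E^{\leq\infty}$ (each path has a unique source vertex), we have $\sum_{v}p_v=1$. Writing $B_n:=\{x\in E^{\leq\infty}:|x|\geq n\}=\bigsqcup_{\mu\in E^n}\cZ(\mu)$, we obtain $m(B_n)=e^{-\beta n}\sum_{v}|E^nv|\,p_v$, and swapping the order of summation yields
\[
\sum_{n=0}^\infty m(B_n)=\sum_{v\in E^0} p_v\, Z_v(\beta)\leq \sup_{v\in E^0}Z_v(\beta)<\infty.
\]
In particular $m(B_n)\to 0$, so $m(E^\infty)=m(\bigcap_n B_n)=0$, and $m$ is concentrated on $E^*=\bigsqcup_{v\in E^0}E^*v$.

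To conclude, I will invoke ergodicity: each orbit $E^*v$ is an invariant Borel set, so exactly one vertex $v$ satisfies $m(E^*v)=1$. By the growth hypothesis $v\in E^0_{\beta\textup{-reg}}$, and $m_{v,\beta}$ is the unique $e^\beta$-conformal probability measure supported on $E^*v$ (cf. \eqref{eqn:measurefromorbit} and Lemma~\ref{lem:KMSv}(ii)), forcing $m=m_{v,\beta}$ and hence $\phi_m=\phi_{v,\beta}$. Proposition~\ref{prop:gns} (or directly Theorem~\ref{thm:irredrep}) then gives that $\phi_{v,\beta}$ is of type~I, while injectivity of $v\mapsto\phi_{v,\beta}$ is already contained in Theorem~\ref{thm:Toeplitz}(i). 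The only delicate point I anticipate is the verification of the conformality identity for $\cZ(\mu)$ at non-regular vertices together with the normalisation $\sum_v p_v=1$; both should follow routinely from the groupoid description in Section~\ref{ssec:backgrounToeplitz} by examining the action of the bisections $U_{\mu,r(\mu)}$.
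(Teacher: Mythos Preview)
Your proposal is correct and follows essentially the same route as the paper: both derive $m(\cZ(\mu))=e^{-\beta|\mu|}m(\cZ(r(\mu)))$, bound $\sum_{\mu\in E^*}m(\cZ(\mu))$ by $\sup_v Z_v(\beta)$, and conclude $m$ is supported on $E^*$ and hence on a single orbit $E^*v$ by ergodicity. The only cosmetic difference is that the paper invokes the Borel--Cantelli lemma on the family $\{\cZ(\mu)\}_{\mu\in E^*}$, whereas you organise the same sum by path length and observe $m(E^\infty)=\lim_n m(B_n)=0$ directly.
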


\begin{proof}
	Suppose that $\phi$ is an extremal KMS$_\beta$ state on $\T C^*(E)$, and let $m$ be the $e^\beta$-conformal probability measure on $E^{\leq \infty}$ associated with $\phi$. Then $m$ is determined by the values $m(\cZ(\mu))$ for $\mu\in E^*$, and
	\[
	m(\cZ(\mu))=\phi(s_\mu s_\mu^*)=e^{-\beta |\mu|}\phi(s_\mu^*s_\mu)=e^{-\beta |\mu|}m(\cZ(r(\mu))),
	\]
	where the second equality uses the KMS$_\beta$ condition. Since $\sum_{v\in E^0}m(\cZ(v))=1$, we have
	\[
	\sum_{\mu\in E^*}m(\cZ(\mu))=\sum_{v\in E^0}m(\cZ(v))\sum_{\mu\in E^*v}e^{-\beta |\mu|}=\sum_{v\in E^0}m(\cZ(v))Z_v(\beta)\leq\sup_{v\in E^0}Z_v(\beta)<\infty.
	\]
	Hence, the Borel--Cantelli lemma implies that $m$ is concentrated on the set 
	\[
	\{x\in E^{\leq \infty} : x\in \cZ(\mu) \text{ for only finitely many } \mu\in E^*\}=E^*.
	\]
	Since $m$ is extremal in $\QI_\beta(\G_E,c)$, it follows that $m$ must be concentrated on an orbit in $E^*$, that is, on a set of the form $E^*v$ for some $v\in E^0$. Therefore, $m=m_{v,\beta}$ for some vertex $v\in E^0$. By Proposition~\ref{prop:KumRen}, we have $\phi=\phi_{v,\beta}$.
\end{proof}

We demonstrate in the following example that having $E^0=E^0_{\beta\textup{-reg}}$ is not sufficient to conclude all KMS$_\beta$ are of type I; one needs the stronger assumption $\sup_{v\in E^0}Z_v(\beta)<\infty$.

\begin{example}
\label{exm:CarLarExample}
Consider the graph $E$ with vertex set $E^0=\{v_n : n\geq 0\}$, edge set $E^1=\{e_n :n\geq 0\}\cup\{f_n : n\geq 0\}$ with $r(e_n)=s(e_n)=v_n$, $s(f_n)=v_n$, and $r(f_n)=v_{n+1}$ for all $n\geq 0$; $E$ is depicted by 
\begin{equation}\label{pic:CarLarGraph}
\begin{tikzcd}[
arrow style=tikz,
>={latex} 
]
v_0\arrow[loop,"e_0"]\arrow[r,"f_0"] & v_1\arrow[loop,"e_1"]\arrow[r,"f_1"] & v_2\arrow[loop,"e_2"]\arrow[r,"f_2"] & v_3 \arrow[loop,"e_3"]\arrow[r,"f_3"] &\cdots 
\end{tikzcd}
\end{equation}
The KMS states on the Toeplitz algebra of $E$ have been parameterised in \cite[Theorem~7.5]{CarLar}. 
As a consequence, we have $E^0=E^0_{\beta\textup{-reg}}$ for $\beta\in (0,\infty)$, and there is a unique $e^\beta$-conformal probability measure $m_{\inf}^\beta$ concentrated on $E^{\infty}$ for $\beta\in (0,\log 2)$, which is not of type I by Lemma~\ref{lem:xnotsummable} and Theorem~\ref{thm:Toeplitz}. 

Here, we compute the partition functions of the C*-dynamical system $(\T C^*(E),\alpha)$. For $\beta\in (0,\infty)$, we have 
\begin{equation}
Z_{v_n}(\beta)=\begin{cases}
2(n+1) & \text{ if } \beta=\log 2,\\
\frac{1}{1-e^{-\beta}}\left(\frac{1-a^{n+1}}{1-a}\right) & \text{ if } \beta\neq \log 2,
\end{cases}
\end{equation}
for each $n\geq 0$ where $a:=\frac{e^{-\beta}}{1-e^{-\beta}}$. This follows from the recurrence relation 
\begin{equation}
Z_{v_{n+1}}(\beta) = \sum_{\mu \in v_{n+1}E^*v_{n+1}} e^{-\beta |\mu|} + \sum_{k=0}^\infty \sum_{\nu \in E^*v_{n}} e^{-\beta |\nu f_n e_{n+1}^k|} 
= \frac{1}{1-e^{-\beta}} + a Z_{v_n}(\beta) 
\end{equation}
of partition functions, where $n \in\Nz$ and $\beta \in (0,\infty)$, combined with 
\begin{equation}
Z_{v_0}(\beta) = \sum_{\mu \in v_0E^*v_0} e^{-\beta |\mu|} = \frac{1}{1-e^{-\beta}}. 
\end{equation} 
In particular, $\sup_{n} Z_{v_n}(\beta)$ is finite if and only if $\beta\in (\log 2,\infty)$. Note that it is possible to apply results from \cite[Example~7.4]{CarLar} to compute these partition functions. 

For $\beta>0$ and $n\geq 0$, we have $Z_{v_n}(\beta)<\infty$, that is, $E^0=E^0_{\beta\textup{-reg}}$ for every $\beta>0$. Hence, Theorem~\ref{thm:graph} implies that for every $\beta>0$, the set of extremal KMS$_\beta$ states that are of type I is precisely $\{\varphi_{v_n,\beta} : n\geq 0\}$. For $\beta\in (\log 2,\infty)$, we have $\sup_{n} Z_{v_n}(\beta)<\infty$, so that Theorem~\ref{thm:KMSwithgrowthcond} implies that every KMS$_\beta$ state is of type I. Needless to say, both these observations are consistent with \cite[Theorem~7.5]{CarLar}. 
For $\beta\in(0,\log 2)$, we have $\sup_{n} Z_{v_n}(\beta)=\infty$, so the hypotheses of Theorem~\ref{thm:KMSwithgrowthcond} are not satisfied. In this case, the KMS$_\beta$ state $\psi_\beta$ corresponding to the measure $m_{\inf}^\beta$ is not of type I. This demonstrates that the assumption that every vertex is $\beta$-regular is not sufficient to guarantee that every KMS$_\beta$ state is of type I. We point out that this phenomenon can only occur when the graph has infinitely many vertices.
\end{example}

\subsection{Admissible triples and ground states}

\label{subsec:triples}

It follows from Corollary~\ref{cor:Toeplitz} that we can recover information from $(\T C^*(E),\alpha)$ about the number of finite paths in $E$ that terminate at a vertex in $\bigcup_{\beta>0}E^0_{\beta\textup{-reg}}$. However, it may well happen that few vertices are $\beta$-regular. For example, the graph with a single vertex and a countably infinite set of edges,  which gives rise to $\cO_\infty$, has no $\beta$-regular vertices (cf. \cite[Example~7.10]{CarLar}).
Ground states exist regardless of any $\beta$-summability considerations, so by considering admissible triples of ground states we are able to recover some interesting information about finite paths in general, even when $(\T C^*(E),\alpha)$ has no KMS$_\beta$ states for every $\beta<\infty$.

The KMS$_\infty$ and ground states of $(\T C^*(E),\alpha)$ have been computed in \cite{CarLar}; we shall only need the explicit description of all extremal ground states. For each vertex $v\in E^0$, let $m_v$ denote the point-mass measure at $v\in E^*\subseteq E^{\leq \infty}$. Then $\phi_v:=m_v\circ\Phi$ is a state on $\T C^*(E)$ such that 
\[
\phi_v(s_\mu s_\nu^*)=\begin{cases}
1 & \text{ if } \mu=\nu=v,\\
0 & \text{ otherwise,}
\end{cases}
\]
for $\mu,\nu\in E^*$ with $r(\mu)=r(\nu)$.

\begin{theorem}[{\cite[Theorem~4.1~Section~\S~6]{CarLar}}]
	\label{thm:ground}
	We have $\E(\Gr(\T C^*(E),\alpha))=\{\varphi_v : v\in E^0\}$.
\end{theorem}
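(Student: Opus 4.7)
The plan is to verify both inclusions in the equality $\E(\Gr(\T C^*(E),\alpha))=\{\varphi_v : v\in E^0\}$. The forward containment is straightforward; the main content is to show that every extremal ground state is of the form $\varphi_v$.

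First I would check that each $\varphi_v$ is an extremal ground state. Applying Lemma~\ref{lem:SW} to the orbit $O = E^*v \subseteq \G_E^{(0)}$ produces an irreducible representation $\pi_v\colon \T C^*(E) \to \cB(\ell^2(E^*v))$, and a direct computation on the spanning set $\{s_\mu s_\nu^*\}$ (using $\Phi(s_\mu s_\nu^*) = \delta_{\mu,\nu}\, 1_{\cZ(\mu)}$) shows that $\varphi_v(\cdot)=\langle \pi_v(\cdot)\delta_v,\delta_v\rangle$. So the GNS representation of $\varphi_v$ is unitarily equivalent to $\pi_v$ with cyclic vector $\delta_v$. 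The unitary implementation of $\alpha$ is $U_t\delta_\mu = e^{it|\mu|}\delta_\mu$, whose generator $H\delta_\mu = |\mu|\delta_\mu$ is positive with $0 \in \Sp(H)$; so $\varphi_v$ is a ground state. Irreducibility of $\pi_v$ makes $\varphi_v$ pure, hence extremal in $\Gr(\T C^*(E),\alpha)$ by Lemma~\ref{lem:extremal}.

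For the converse, let $\varphi \in \Gr(\T C^*(E), \alpha)$. Each monomial $s_\mu s_\nu^*$ is $\alpha$-analytic with $\alpha_z(s_\mu s_\nu^*) = e^{iz(|\mu|-|\nu|)} s_\mu s_\nu^*$. Applying the ground-state boundedness condition to $\varphi(s_\mu\alpha_z(s_\mu^*)) = e^{-iz|\mu|}\varphi(s_\mu s_\mu^*)$ with $z = iy$, $y \to +\infty$, forces $\varphi(s_\mu s_\mu^*) = 0$ for every $\mu \in E^*$ with $|\mu| \geq 1$. Cauchy--Schwarz then yields $|\varphi(s_\mu s_\nu^*)|^2 \leq \varphi(s_\mu s_\mu^*)\varphi(s_\nu s_\nu^*)$, so $\varphi(s_\mu s_\nu^*) = 0$ whenever $|\mu| + |\nu| \geq 1$. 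Hence $\varphi$ is determined by the values $c_v := \varphi(p_v)$ for $v \in E^0$, and extending $\varphi$ to the multiplier algebra (where $\sum_{v \in E^0} p_v$ converges strictly to the unit) gives $\sum_v c_v = 1$; a direct check on the spanning monomials then produces $\varphi = \sum_{v \in E^0} c_v \varphi_v$.

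With this decomposition, extremality of $\varphi$ in the convex set $\Gr(\T C^*(E),\alpha)$ is equivalent to $(c_v)_{v \in E^0}$ being a point mass on $E^0$, which forces $\varphi = \varphi_v$ for exactly one vertex. The main obstacle is legitimising the convex decomposition $\varphi = \sum_v c_v \varphi_v$ when $E^0$ is infinite: this relies on passing to the multiplier algebra and invoking strict continuity to ensure the coefficients sum to $1$. Once that is in place, the proof is essentially the $y \to +\infty$ trick applied to the $\alpha$-analytic monomials $s_\mu s_\nu^*$ combined with purity of the orbit representations $\pi_v$.
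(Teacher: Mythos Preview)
The paper does not prove this statement; it simply records it as a citation of \cite[Theorem~4.1 and \S 6]{CarLar}. So there is no ``paper's own proof'' to compare against---you have supplied a self-contained argument where the authors chose to invoke the literature.

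Your argument is correct and follows the standard route. A couple of small points worth tightening: in the forward direction, note that Lemma~\ref{lem:SW} as stated applies to orbits in $\G^{(0)}$, and you should remark that $v\in E^0\subseteq E^{\leq\infty}$ has trivial isotropy (since $(v,k,v)\in\G_E$ forces $k=0$), so the orbit $E^*v$ is of the required form and $(\pi_v,\ell^2(E^*v),\delta_v)$ really is the GNS triple. In the converse, the passage through the multiplier algebra is unnecessary: the net $\big(\sum_{v\in F}p_v\big)_{F\subseteq E^0\text{ finite}}$ is an approximate identity for $\T C^*(E)$, so $\sum_{v\in F}\varphi(p_v)\to 1$ directly, which gives $\sum_v c_v=1$ without any strict-continuity subtleties. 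Finally, to conclude extremality forces a point mass you should note that the $\varphi_v$ are pairwise distinct (clear from $\varphi_v(p_w)=\delta_{v,w}$), so that a nontrivial countable convex combination genuinely violates extremality.
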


\begin{remark}
    Given $\beta,\tilde{\beta}\in [0,\infty)$ such that $\beta\leq \tilde{\beta}$, we see that $Z_v(\beta)$ converges, then $Z_v(\tilde{\beta})$ must also converge. Thus, we have the inclusion $E^0_{\beta\textup{-reg}}\subseteq E^0_{\tilde{\beta}\textup{-reg}}$ whenever  $\beta\leq \tilde{\beta}$. If $v\in \bigcup_{\beta>0}E^0_{\beta\textup{-reg}}$, then the weak* limit $\lim_{\tilde{\beta}>\beta}\varphi_{v,\tilde{\beta}}$ exists and coincides with $\varphi_v$ (see \cite[Corollary~2.7]{BLT}). Thus, each such $\varphi_v$ is a KMS$_\infty$ state, and we obtain an injection of $\bigcup_{\beta>0}E^0_{\beta\textup{-reg}}$ into $\E_{\rm I}(\Gr(\T C^*(E),\alpha))$ (cf. \cite[Theorem~6.1]{CarLar}). For the definition of KMS$_\infty$ state, see \cite[Definition~3.7]{CM}.
\end{remark}

For each $v\in E^0$, let $\pi_v\colon \T C^*(E)\to\cB(\ell^2(E^*v))$ be the irreducible representation such that for $w\in E^0$, $\pi_v(p_w)$ is the orthogonal projection onto $\ell^2(wE^*v)$, and
\[
\pi_v(s_\mu)\delta_\nu=\begin{cases}
\delta_{\mu\nu} & \text{ if } r(\mu)=s(\nu),\\
0 & \text{ if }r(\mu)\neq s(\nu),
\end{cases}
\]
for all $\mu\in E^*$ and $\nu\in E^*v$ (cf. Lemma~\ref{lem:SW}). Here, $\{\delta_\nu : \nu\in E^*v\}$ is the canonical orthonormal basis for $\ell^2(E^*v)$ consisting of point-mass functions. Note that when $v$ is $\beta$-regular, $\pi_v$ coincides with the representation in the admissible triple for $\varphi_{v,\beta}$. 
Let $\{U_t\}_{t\in\Rz}$ be the one-parameter unitary group on $\ell^2(E^*v)$ given by $U_t\delta_\mu=e^{it|\mu|}\delta_\mu$ for $\mu\in E^*$ and  $t\in\Rz$, and let $H_v={\rm diag}(|\mu|)_{\mu\in E^*v}$ be the unique positive generator of the unitary group $\{U_t\}_{t\in\Rz}$ that has $0$ in its spectrum.
\begin{proposition}
	\label{prop:groundGNS}
	If $v\in E^0$, then $(\pi_v,\ell^2(E^*v),e^{-H_v})$ is an admissible triple for $\phi_v$.
\end{proposition}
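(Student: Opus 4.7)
The plan is to verify directly the three properties of Definition~\ref{def:ground&triples}: namely that $(\pi_v,\ell^2(E^*v))$ is a GNS-representation for $\phi_v$, that $\{e^{itH_v}\}_{t\in\Rz}$ implements $\sigma^E$ in this representation, and that $H_v$ is the distinguished nonnegative generator whose spectrum contains $0$. The irreducibility of $\pi_v$ can be read off from Lemma~\ref{lem:SW} applied to the orbit $\G_E v = E^*v$, where $v\in E^0$ is viewed as the length-$0$ path in $E^{\leq\infty}$ based at $v$.

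For the GNS identification, I would check that the unit vector $\delta_v\in\ell^2(E^*v)$ is cyclic with $\langle\pi_v(a)\delta_v,\delta_v\rangle=\phi_v(a)$ for all $a\in\T C^*(E)$. Cyclicity is immediate since $\pi_v(s_\mu)\delta_v=\delta_\mu$ for every $\mu\in E^*v$. To recover $\phi_v$, a short computation on spanning monomials gives $\pi_v(s_\mu)^*\delta_v=\delta_v$ when $\mu=v$ and $0$ otherwise, which yields
\[
\langle\pi_v(s_\mu s_\nu^*)\delta_v,\delta_v\rangle=\langle\pi_v(s_\nu)^*\delta_v,\pi_v(s_\mu)^*\delta_v\rangle=\delta_{\mu,v}\delta_{\nu,v},
\]
matching the explicit values of $\phi_v$ recorded before Theorem~\ref{thm:ground}. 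Since $\phi_v$ is extremal (hence, by Lemma~\ref{lem:extremal}, pure), $(\pi_v,\ell^2(E^*v),\delta_v)$ is unitarily equivalent to the GNS triple of $\phi_v$.

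For the dynamics, I would verify on monomials that $e^{itH_v}\pi_v(s_\mu s_\nu^*)e^{-itH_v}=\pi_v(\sigma^E_t(s_\mu s_\nu^*))$. The key arithmetic is that $\pi_v(s_\mu s_\nu^*)\delta_\lambda$ is nonzero only when $\lambda=\nu\kappa$ for some $\kappa\in E^*v$, in which case $\pi_v(s_\mu s_\nu^*)\delta_\lambda=\delta_{\mu\kappa}$ and $|\mu\kappa|-|\lambda|=|\mu|-|\nu|$; so conjugation by $e^{itH_v}$ multiplies by $e^{it(|\mu|-|\nu|)}$, exactly matching $\sigma^E_t$. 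Clearly $H_v\geq 0$ and $H_v\delta_v=0$, so $0\in\Sp H_v$.

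The only slightly delicate step is pinning down $H_v$ (rather than $H_v$ plus a real constant) as the generator that enters the admissible triple. Because $\pi_v$ is irreducible, any other one-parameter unitary group implementing $\sigma^E$ in $\pi_v$ differs from $\{e^{itH_v}\}$ by a character $t\mapsto e^{it\lambda}$, so its generator differs from $H_v$ by $\lambda\cdot I$. Transporting $\{U_t^{\phi_v}\}$ of Definition~\ref{def:ground&triples} through the unitary equivalence from the previous paragraph produces such a unitary group, whose generator is nonnegative and has $0$ in its spectrum; since the same holds for $H_v$, the constant $\lambda$ must vanish. This identifies $H_v$ with the transported $H_{\phi_v}$, and hence $(\pi_v,\ell^2(E^*v),e^{-H_v})$ is the admissible triple for $\phi_v$.
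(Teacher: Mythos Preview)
Your proof is correct and follows essentially the same route as the paper's: identify $(\pi_v,\ell^2(E^*v),\delta_v)$ as the GNS triple for $\phi_v$ via the vector-state computation, then check that conjugation by $e^{itH_v}$ implements $\sigma^E$. Your final paragraph, which pins down $H_v$ among all covariant generators via irreducibility and the normalisation $H_v\geq 0$, $0\in\Sp H_v$, is more careful than the paper's brief treatment; the paper leaves this implicit (it follows immediately since $e^{itH_v}$ fixes the cyclic vector $\delta_v$, matching the canonical GNS implementation of $\sigma^E$).
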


\begin{proof}
We can see that $\varphi_v(a)=\langle \pi_v(a)\delta_v, \delta_v\rangle$ and $\delta_v$ is a cyclic vector, so that $(\pi_v,\ell^2(E^*v), \delta_v)$ is the GNS-representation of $\varphi$. In addition, a calculation shows that $(U_t \pi_v(s_{\mu})U_{-t})\delta_{\nu} = e^{it|\mu|}\pi(s_\mu)\delta_{\nu}= \pi_v(\sigma_t(s_\mu))\delta_\nu$ for any $\mu,\nu \in E^*$. Hence  $(\pi_v,\ell^2(E^*v),e^{-H_v})$ is an admissible triple.
\end{proof}

We are now ready for the main result of this subsection.
\begin{theorem}
	\label{thm:general}
Let $E$ and $F$ be countable directed graphs. If there exists an isomorphism of C*-dynamical systems $(\T C^*(E),\alpha^E)\cong (\T C^*(F),\alpha^F)$, then there is a bijection $E^0\to F^0$, $v\mapsto v'$, such that $|E^nv|=|F^nv'|$ for every $v\in E^0$ and every $n\geq 0$.
\end{theorem}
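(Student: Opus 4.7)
The plan is to run the same game played for KMS states of type I in Section~\ref{subsec:triples}, but now with extremal \emph{ground} states, whose admissible triples we have explicitly computed in Proposition~\ref{prop:groundGNS}. The advantage of ground states is that they always exist, irrespective of any $\beta$-summability condition, and Theorem~\ref{thm:ground} parameterises them precisely by vertices.

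First, I would set up the bijection $E^0\to F^0$. Let $\Theta\colon \T C^*(E)\to\T C^*(F)$ be an isomorphism of C*-dynamical systems, so that $\Theta\circ\alpha^E_t=\alpha^F_t\circ\Theta$ for all $t\in\Rz$. Pullback along $\Theta^{-1}$ yields an affine weak$^*$-homeomorphism $\Gr(\T C^*(E),\alpha^E)\to\Gr(\T C^*(F),\alpha^F)$, which carries extreme points to extreme points. By Theorem~\ref{thm:ground}, both sets of extreme points are canonically parameterised by the vertex sets, so we obtain a bijection $E^0\to F^0$, $v\mapsto v'$, determined by the relation $\varphi_v\circ\Theta^{-1}=\varphi_{v'}$.

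Next, I would transfer the admissible triples. Since a ground state is pure (cf. Lemma~\ref{lem:extremal} invoked in Definition~\ref{def:ground&triples}), its GNS representation is irreducible, and $\Theta$ intertwines the GNS representations of $\varphi_v$ and $\varphi_{v'}$: there exists a unitary $\cU\colon \cH_{\varphi_v}\to\cH_{\varphi_{v'}}$ with $\cU\pi_{\varphi_v}(a)\cU^*=\pi_{\varphi_{v'}}(\Theta(a))$ for all $a\in\T C^*(E)$ and $\cU\xi_{\varphi_v}=\xi_{\varphi_{v'}}$. The $\Rz$-equivariance of $\Theta$ shows that $\cU$ conjugates the canonical implementing unitary groups, and hence also their unique positive generators with $0$ in the spectrum. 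Invoking Proposition~\ref{prop:groundGNS} on both sides, we conclude that the admissible triples $(\pi_v,\ell^2(E^*v),e^{-H_v})$ and $(\pi_{v'},\ell^2(F^*v'),e^{-H_{v'}})$ are unitarily equivalent; in particular, $H_v$ and $H_{v'}$ have the same spectral data (spectrum together with multiplicities).

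Finally, I would extract the path counts. By the explicit description in Proposition~\ref{prop:groundGNS}, $H_v$ is diagonal with eigenvalue $n\in\Nz$ of multiplicity $|E^nv|$ in the orthonormal basis $\{\delta_\mu:\mu\in E^*v\}$, and similarly $H_{v'}$ has eigenvalue $n$ of multiplicity $|F^nv'|$. Unitary equivalence therefore forces $|E^nv|=|F^nv'|$ for every $n\geq 0$, yielding the theorem. The only genuinely delicate step is the unitary equivalence of admissible triples of ground states in step two, since Proposition~\ref{prop:adTripleUnique} is formulated only for KMS states of type I; but the proof is a straightforward amalgamation of uniqueness of the GNS representation for a pure state with the uniqueness of the Hamiltonian imposed in Definition~\ref{def:ground&triples}.
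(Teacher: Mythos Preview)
Your proposal is correct and follows essentially the same approach as the paper: obtain the bijection $E^0\to F^0$ from Theorem~\ref{thm:ground}, invoke Proposition~\ref{prop:groundGNS} for the admissible triples, and read off $|E^nv|$ as the multiplicity of the eigenvalue $n$ of $H_v$. If anything, you are slightly more explicit than the paper about why the admissible triple of a ground state is determined up to unitary equivalence (the paper simply asserts that ``all spectral information contained in $H_v$ can be recovered from $(\T C^*(E),\alpha)$''), and your remark that Proposition~\ref{prop:adTripleUnique} does not literally cover ground states, together with your indicated workaround via purity and the uniqueness built into Definition~\ref{def:ground&triples}, is exactly the right justification.
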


\begin{proof}
Suppose there is an isomorphism $(\T C^*(E),\alpha^E)\cong (\T C^*(F),\alpha^F)$. Then by Theorem~\ref{thm:ground}, we have bijections $E^0\to \E_{\rm I}(\Gr(\T C^*(E),\alpha^E))$, $v\mapsto \phi_v^E$, and $F^0\to \E_{\rm I}(\Gr(\T C^*(F),\alpha^F))$, $w\mapsto \phi_w^F$, so that we obtain a bijection $E^0\cong F^0$, $v\mapsto v'$, such that $\phi_v^E\mapsto\phi_{v'}^F$. 

By Proposition~\ref{prop:groundGNS}, $(\pi_v,\ell^2(E^*v),e^{-H_v})$ is an admissible triple for $\phi_v$, so all spectral information contained in $H_v$ can be recovered from $(\T C^*(E),\alpha)$. Let $e^{-H_v}=\textup{diag}(e^{-|\mu|})_{\mu\in E^*v}$ be the (bounded) diagonal operator on $\ell^2(E^*v)$ such that, for every $\mu\in E^*v$, $e^{-H_v}\delta_\mu=e^{-|\mu|}\delta_\mu$, and let $\ell^2(E^*v)_n$ denote the eigenspace for $e^{-H_v}$ corresponding to the eigenvalue $e^{-n}$.
The basis vector $\delta_\mu$ has eigenvalue $e^{-n}$ if and only if $|\mu|=n$, that is, $\mu\in E^n$. Thus, the $n$-th eigenspace for $e^{-H_v}$ is $\ell^2(E^nv)$. Similar observations hold for the graph $F$, so we have $|E^nv|=\dim\ell^2(E^nv)=\dim\ell^2(F^nv')=|F^nv'|$ for every $v\in E^0$ and every $n\geq 0$.
\end{proof}

\subsubsection{Graph reconstruction from vertex algebra preserving isomorphisms of C*-dynamical systems}

We now show that our results can be used to obtain a generalisation of one of the main results from \cite{BLRS} to the case of infinite graphs. This generalisation can also be proved using entirely different techniques without mention of KMS or ground states, see \cite[Section~6]{DEG}, but we emphasise that the approach using ground states allows one to recover the graph explicitly in terms of equilibrium states of the C*-dynamical system. Given a countable directed graph $E$, we follow \cite{BLRS} and consider the vertex algebra $M_E:=\overline{\spn}(\{p_w : w\in E^0\})\subseteq \T C^*(E)$. The following theorem shows that one can recover $E$ explicitly from the triple $(\T C^*(E),\alpha,M_E)$.

\begin{theorem}
	\label{thm:recon}
Let $E$ be an arbitrary countable directed graph, and let $\alpha$ denote the gauge action on $\T C^*(E)$. Let $\tilde{E}$ be the directed graph with vertex set $\tilde{E}^0:=\E(\Gr(\T C^*(E),\alpha))$ and edge set defined by
\[
|\phi_w \tilde{E}^1\phi_v|:=\dim(\pi_v(p_w)\ell^2(E^1v)).
\]
Then 

\begin{enumerate}[\upshape(i)]
	\item $\tilde{E}$ is an isomorphism invariant of the triple $(\T C^*(E),\alpha,M_E)$;
	\item $E\cong\tilde{E}$.
\end{enumerate}
\end{theorem}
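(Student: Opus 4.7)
The plan is to treat (ii) first by a direct computation on the canonical admissible triples of the ground states, and then deduce (i) by transporting these triples along an isomorphism of triples $(\T C^*(E),\alpha^E,M_E)\cong(\T C^*(F),\alpha^F,M_F)$. For (ii), Theorem~\ref{thm:ground} gives the bijection $E^0\to\tilde{E}^0$, $v\mapsto\varphi_v$, and Proposition~\ref{prop:groundGNS} identifies the admissible triple of $\varphi_v$ as $(\pi_v,\ell^2(E^*v),e^{-H_v})$. Since $\ell^2(E^1v)$ is precisely the eigenspace of $e^{-H_v}$ for eigenvalue $e^{-1}$ and $\pi_v(p_w)$ is the orthogonal projection onto $\ell^2(wE^*v)$, a routine check gives $\pi_v(p_w)\ell^2(E^1v)=\ell^2(wE^1v)$. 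Hence $|\varphi_w\tilde{E}^1\varphi_v|=|wE^1v|$, which agrees with the number of edges from $w$ to $v$ in $E$, yielding $E\cong\tilde{E}$.

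For (i), suppose $\theta\colon(\T C^*(E),\alpha^E,M_E)\overset{\cong}{\to}(\T C^*(F),\alpha^F,M_F)$ is an isomorphism of triples. The vertex set $\tilde{E}^0=\cE(\Gr(\T C^*(E),\alpha^E))$ depends only on $(\T C^*(E),\alpha^E)$, so pull-back $\varphi\mapsto\varphi\circ\theta^{-1}$ induces a bijection $\tilde{E}^0\to\tilde{F}^0$; composing with the identifications from Theorem~\ref{thm:ground} yields a bijection $v\mapsto v'$ with $\varphi^F_{v'}=\varphi^E_v\circ\theta^{-1}$. Separately, $M_E$ is commutative, isomorphic to $c_0(E^0)$ (or to $\Cz^{|E^0|}$ when $E^0$ is finite), with the $\{p_w\}_{w\in E^0}$ its minimal projections, so $\theta|_{M_E}$ yields another bijection $w\mapsto w'$ characterised by $\theta(p_w)=p_{w'}$. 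Evaluating $\varphi^F_{v'}(p_{w'})=\varphi^E_v(p_w)=\delta_{v,w}$ forces the two bijections to coincide.

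By the universal property of the GNS construction applied to $\varphi^E_v$ and $\varphi^F_{v'}=\varphi^E_v\circ\theta^{-1}$, there is a canonical unitary $\cU\colon\ell^2(E^*v)\to\ell^2(F^*v')$ with $\cU\pi^E_v(a)\cU^*=\pi^F_{v'}(\theta(a))$ for all $a\in\T C^*(E)$. The equivariance $\alpha^F_t\circ\theta=\theta\circ\alpha^E_t$ transports the one-parameter unitary group of Definition~\ref{def:ground&triples}, giving $\cU H^E_v\cU^*=H^F_{v'}$, and hence $\cU\ell^2(E^1v)=\ell^2(F^1v')$ by restricting to the $e^{-1}$-eigenspace. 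Combining this with $\cU\pi^E_v(p_w)\cU^*=\pi^F_{v'}(p_{w'})$ gives
\[
\dim(\pi^E_v(p_w)\ell^2(E^1v))=\dim(\pi^F_{v'}(p_{w'})\ell^2(F^1v')),
\]
so the defining edge counts of $\tilde{E}$ and $\tilde{F}$ match. The main obstacle is establishing that the two a priori distinct bijections $v\mapsto v'$ (from extremal ground states) and $w\mapsto w'$ (from $\theta(M_E)=M_F$) actually coincide; once this compatibility is in hand, the rest of (i) follows from the canonicity of the admissible triple built into Definition~\ref{def:ground&triples}.
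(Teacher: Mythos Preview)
Your proof is correct and follows essentially the same approach as the paper. The one stylistic difference is that the paper isolates the key compatibility step as a separate lemma (Lemma~\ref{lem:ME}): $p_v$ is the unique minimal projection $p\in M_E$ with $\phi_v(p)=1$, giving an intrinsic characterisation of $p_v$ in terms of the data $(\T C^*(E),\alpha,M_E)$. Your direct verification that the two bijections coincide via $\varphi^F_{v'}(p_{w'})=\varphi^E_v(p_w)=\delta_{v,w}$ is exactly the same argument unpacked, and your explicit transport of the admissible triple along the GNS unitary $\cU$ spells out what the paper compresses into the phrase ``by Proposition~\ref{prop:groundGNS} and Lemma~\ref{lem:ME}, we can read off the numbers''.
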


Before continuing to the proof of Theorem~\ref{thm:recon}, we state an immediate corollary, which generalises \cite[Theorem~3.2(1)]{BLRS} to the case of infinite graphs. 

\begin{corollary}[cf. {\cite[Theorem~6.1]{DEG}}]
	\label{cor:recon}
Let $E$ and $F$ be countable directed graphs. Then there is an isomorphism of C*-dynamical systems $(\T C^*(E),\alpha^E)\cong (\T C^*(F),\alpha^F)$ that carries $M_E$ onto $M_F$ if and only if $E\cong F$.
\end{corollary}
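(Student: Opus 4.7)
The plan is to obtain Corollary~\ref{cor:recon} as an essentially immediate consequence of Theorem~\ref{thm:recon}, so the proof proposal decomposes into two short directions.

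For the ``if'' direction, I would start from a directed graph isomorphism $\phi\colon E\overset{\cong}{\to} F$ (i.e.\ a pair of bijections $E^0\to F^0$ and $E^1\to F^1$ intertwining the range and source maps) and use the universal property of the Toeplitz algebra to produce a *-homomorphism $\Theta\colon \T C^*(E)\to \T C^*(F)$ with $p_v\mapsto p_{\phi(v)}$ and $s_e\mapsto s_{\phi(e)}$. Applying the same construction to $\phi^{-1}$ gives a two-sided inverse, so $\Theta$ is a *-isomorphism. Since the gauge action fixes every vertex projection and multiplies each $s_\mu$ by $e^{it|\mu|}$, and since $\Theta$ preserves both vertex projections and path length, $\Theta$ intertwines $\alpha^E$ and $\alpha^F$. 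Finally, $\Theta(M_E) = \overline{\mathrm{span}}\{p_{\phi(v)} : v\in E^0\} = M_F$, which is the required vertex-algebra-preserving equivariant isomorphism.

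For the ``only if'' direction, suppose that $\Theta\colon (\T C^*(E),\alpha^E)\overset{\cong}{\to}(\T C^*(F),\alpha^F)$ is an isomorphism of C*-dynamical systems with $\Theta(M_E)=M_F$. Then $\Theta$ is an isomorphism of the triples $(\T C^*(E),\alpha^E,M_E)\cong(\T C^*(F),\alpha^F,M_F)$, so by Theorem~\ref{thm:recon}(i) the associated graphs $\tilde{E}$ and $\tilde{F}$ reconstructed from these triples are isomorphic as directed graphs. Invoking Theorem~\ref{thm:recon}(ii) for both $E$ and $F$ then yields
\[
E\cong \tilde{E}\cong \tilde{F}\cong F,
\]
as desired.

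The substantive content lies entirely in Theorem~\ref{thm:recon}, which identifies the vertex set with the set of extremal ground states and reads off edge multiplicities from the admissible triples of Proposition~\ref{prop:groundGNS}. The only thing one needs to verify for the corollary itself is the compatibility of the two reconstructions under an abstract isomorphism of triples, but this is automatic: the extremal ground states and their admissible triples, together with the distinguished subalgebra $M_E$ (through which one detects $p_w$ and hence $\dim(\pi_v(p_w)\ell^2(E^1v))$), transport along any isomorphism of triples. Hence the corollary reduces to a purely formal invocation of Theorem~\ref{thm:recon}, with no further analytic or combinatorial work required.
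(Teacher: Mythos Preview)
Your proposal is correct and matches the paper's approach exactly: the paper states this as an ``immediate corollary'' of Theorem~\ref{thm:recon} without giving a separate proof, and your argument spells out precisely the intended formal deduction (the ``if'' direction being the standard construction of an equivariant isomorphism from a graph isomorphism, and the ``only if'' direction being the chain $E\cong\tilde{E}\cong\tilde{F}\cong F$ from parts (i) and (ii) of Theorem~\ref{thm:recon}).
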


Our approach to proving Theorem~\ref{thm:recon} will be somewhat different than the proof of \cite[Theorem~3.2(1)]{BLRS}; this is necessitated by the fact that for infinite graphs, there may be no KMS$_\beta$ states at all. 

\begin{lemma}
	\label{lem:ME}
	 For each vertex $v\in E^0$, $p_v$ is the unique non-zero minimal projection $p\in M_E$ such that $\phi_v(p)=1$.
\end{lemma}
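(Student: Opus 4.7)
The plan is to exploit the simple structure of $M_E$ as an abelian C*-algebra together with the explicit formula for the ground state $\varphi_v$. The mutual orthogonality of the projections $\{p_w : w \in E^0\}$ in $\T C^*(E)$ gives that $M_E$ is abelian; in fact, sending $p_w \mapsto \delta_w$ extends to a $*$-isomorphism $M_E \cong c_0(E^0)$. Under this identification, the nonzero minimal projections of $M_E$ correspond to the point-mass functions $\delta_w$, so the set of nonzero minimal projections of $M_E$ is exactly $\{p_w : w \in E^0\}$.

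Next, I would compute $\varphi_v(p_w)$ for each $w \in E^0$. Since $p_w \in C_0(E^{\leq\infty})$ is fixed by the canonical conditional expectation $\Phi$ (identifying $p_w$ with the characteristic function $\mathbf{1}_{\cZ(w)}$ under the groupoid model of Proposition~\ref{prop:gpoidmodel}), we have
\[
\varphi_v(p_w) = m_v(\cZ(w)) = \delta_v(\cZ(w)) = \begin{cases} 1 & \text{if } w = v, \\ 0 & \text{if } w \ne v, \end{cases}
\]
because $v \in \cZ(w)$ if and only if $v = w$ (as $v$ is a path of length $0$).

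Combining these two observations, if $p \in M_E$ is a nonzero minimal projection with $\varphi_v(p) = 1$, then $p = p_w$ for some $w \in E^0$ and $\varphi_v(p_w) = 1$ forces $w = v$. Conversely, $p_v$ is nonzero, minimal, and satisfies $\varphi_v(p_v) = 1$, which gives uniqueness. I don't anticipate a genuine obstacle here; the entire argument reduces to commutativity of $M_E$ together with the explicit formula $\varphi_v = m_v \circ \Phi$ recorded just before Theorem~\ref{thm:ground}.
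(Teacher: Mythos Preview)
Your proof is correct. The paper's argument is organized slightly differently: rather than first classifying all nonzero minimal projections in $M_E$ via the identification $M_E\cong c_0(E^0)$, it takes an arbitrary minimal projection $p\in M_E$ with $\varphi_v(p)=1$, writes $p$ as a limit of finite linear combinations $\sum_w z_w^n p_w$, observes that $\varphi_v(p)=\lim_n z_v^n=1$, and deduces $p_v p = \lim_n z_v^n p_v = p_v$, so $p_v\leq p$ and minimality forces $p=p_v$. Your route is arguably cleaner, since once you know the minimal projections are exactly the $p_w$'s the computation $\varphi_v(p_w)=\delta_{v,w}$ finishes it immediately; the paper's version has the minor advantage of never explicitly invoking the isomorphism with $c_0(E^0)$, but both arguments are elementary and rest on the same two facts (mutual orthogonality of the $p_w$'s and the explicit formula for $\varphi_v$).
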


\begin{proof}
	Since $\phi_v(\cdot)=\langle\pi_v(\cdot)\delta_v,\delta_v\rangle$, we see that $\phi_v(p_v)=1$.
	Suppose that $p\in M_E$ is a minimal projection such that $\phi_v(p)=1$. Write $p=\lim_n\sum_{w\in E^0} z_w^np_{w}$ where $z_w^n\in\Cz$ are such that $\{w : z_w^n\neq 0\}$ is finite for every $n$. 
	We have $1=\phi_v\left(\lim_n\sum_{w\in E^0} z_w^np_{w}\right)=\lim_nz_v^n$,
	and thus $p_vp=\lim_n\sum_{w\in E^0} z_w^np_vp_w=\lim_nz_v^np_v=p_v$.
	That is, $p_v\leq p$. Since $p$ is minimal, this forces $p=p_v$.
\end{proof}

\begin{proof}[Proof of Theorem~\ref{thm:recon}]
We shall continue with the notation from the proof of Theorem~\ref{thm:general}. 

Proof of (i): By Theorem~\ref{thm:ground}, the map $v\mapsto \varphi_v$ is a bijection from $E^0$ onto $\E(\Gr(\T C^*(E), \alpha))$, so only need to show that we can read off the numbers $|\phi_w\tilde{E}^1\phi_v|=\dim(\pi_v(p_w)\ell^2(E^*v)_1)$ from $(\T C^*(E),\alpha,M_E)$.
Since $\pi_v(p_w)$ is the orthogonal projection from $\ell^2(E^*v)$ onto the subspace $\ell^2(wE^*v)$, we see that $\pi_v(p_w)\ell^2(E^*v)_n=\ell^2(wE^nv)$ for every $w\in E^0$ and $n\geq 0$. Thus, by Proposition~\ref{prop:groundGNS} and Lemma~\ref{lem:ME}, we can read off the numbers $\dim(\pi_v(p_w)\ell^2(E^1v)_1)$.
	
Proof of (ii): The bijection $E^0\to \tilde{E}^0$ given by $v\mapsto\phi_v$ defines a graph isomorphism since $|wE^1v|=\dim(\ell^2(wE^1v))=\dim(\pi_v(p_w)\ell^2(E^1v)_1)=|\phi_w\tilde{E}^1\phi_v|$.
\end{proof}

\subsection{A geometric interpretation of the critical inverse temperatures} 
\label{ssec:geom}
As we have seen in Theorem \ref{thm:general}, $|E^nv|$ is a C*-dynamical invariant for any $n \in \Nz$ and $v \in E^0$ even when the partition function $Z_v(\beta)$ does not converge. However, as we shall see in this subsection, we can recover another invariant of graphs from the convergence condition of partition functions. 

As before, for each $v\in E^0$, we let $\beta_v=\inf(\{\beta\in \Rz : Z_v(\beta)<\infty\})$ denote the abscissa of convergence of $Z_v(\beta)$. We shall refer to $\beta_v\in \Rz \cup \{\pm \infty\}$ as the \emph{critical inverse temperatures} of the C*-dynamical system $(\T C^*(E),\alpha)$.

\subsubsection{Summary of Hausdorff dimension}
First, we summarise some known results about Hausdorff dimension of trees. Throughout this subsection, $T$ denotes a locally finite infinite rooted tree with root $o$. We shall consider such trees as directed graphs. For $v \in T^0$, let $|v|$ be the distance of $v$ from $o$. If there is an edge $e$ between $v_1$ and $v_2$ for $|v_1|+1 = |v_2|$, then $s(e) = v_1$ and $r(e)=v_2$. As usual, we tacitly assume that any vertex in $T$ can be reached from the root. 
\begin{definition}[cf. {\cite[p.12]{LyoPer}}] \label{def:Hausdorff}
	Let $\partial T$ be the compact space of infinite paths which start from  $o$. 
	The metric on $\partial T$ is defined by 
	\[
	d(x,y) =\inf\{e^{-n} : x_i=y_i \text{ for } 1 \leq i \leq n \text{ and } x_{n+1}\neq y_{n+1} \}\quad \text{for }x,y\in \partial T.
	\]
	A collection $\cC$ of subsets of $\partial T$ is a \emph{cover} if $\bigcup \cC = \partial T$. The \emph{Hausdorff dimension of $\partial T$} is defined by 
	\[ \dim \partial T = \sup \{ \beta >0 \colon \inf \{ \sum_{B \in \cC} (\textup{diam\,} B)^\beta \colon \cC \mbox{ is a countable cover of $\partial T$}\} >0\}. \]
\end{definition}

\begin{remark}
	    Note that $\partial T$ is the boundary of the rooted tree $T$, which is not the same as the boundary of $T$ when viewed as a directed graph.
	\end{remark}

\begin{definition}[cf. {\cite[p.81]{LyoPer}}]
Let $T$ be a locally finite infinite rooted tree. 
The \emph{upper (resp. lower) growth rate of $T$} is defined by 
	\begin{align*}
		\ugr T &= \limsup_{n \to \infty} |\{v \in T \colon |v|=n\}|^{1/n} \\
		(\mbox{resp. }\lgr T &= \liminf_{n \to \infty} |\{v \in T \colon |v|=n\}|^{1/n}). 
	\end{align*}
\end{definition}

\begin{definition}[cf. {\cite[p.82]{LyoPer}}]
	For $v \in T$, let $T_v$ be the subtree of $T$ consisting of all descendants of $v$ and whose root is $v$. 
	A tree $T$ is \emph{periodic} if there exists a constant $N \in \Nz$ such that for any vertex $v \in T^0$, there exists a vertex $w \in T^0$ with $|w| \leq N$ such that $T_v$ is isomorphic to $T_w$ as rooted trees. 
\end{definition}

For the Hausdorff dimensions of periodic trees, the following formula is known.

\begin{lemma} \label{lem:minkowski}
	If $T$ is periodic, then we have 
	\[ \dim \partial T = \log \ugr T = \log \lgr T . \]
\end{lemma}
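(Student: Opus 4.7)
The plan is to exploit the periodicity of $T$ to reduce the asymptotics of $|T_n|$ and the dimension computation to a finite-dimensional spectral problem. Concretely, I would proceed in three stages: first establish $\ugr T = \lgr T$ via Perron--Frobenius applied to a transition matrix built from the finitely many subtree-isomorphism classes of $T$; then prove the standard upper bound $\dim \partial T \leq \log \ugr T$ using the natural cylindrical cover of $\partial T$; and finally establish the matching lower bound $\dim \partial T \geq \log \lgr T$ by constructing a Frostman-type measure and applying the mass distribution principle.

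For the first stage, periodicity guarantees that $\{T_v : v \in T^0\}$ meets only finitely many rooted-tree isomorphism classes $S_1, \ldots, S_k$. Writing $\tau(v)$ for the class of $T_v$, let $n_{ij}$ count the children of the root of $S_i$ whose pendant subtree lies in class $S_j$, and set $M = (n_{ij})$. A direct induction on $n$ gives $|\{v \in T : |v|=n,\, \tau(v)=j\}| = (M^n)_{i_0 j}$, where $S_{i_0}$ is the class of $T$ itself, so $|T_n| = (M^n \mathbf{1})_{i_0}$. Perron--Frobenius applied to the non-negative matrix $M$ (after, if necessary, restricting to the indices reachable from $i_0$ and isolating the dominant strongly connected component) yields an eigenvalue $\alpha$ with a non-negative right eigenvector $\pi$ satisfying $M\pi = \alpha\pi$ and $\pi_{i_0} > 0$, and implies $\lim_n |T_n|^{1/n} = \alpha$; in particular $\ugr T = \lgr T = \alpha$.

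For the upper bound, covering $\partial T$ by the cylinders $B(v) = \{x \in \partial T : v \textup{ lies on } x\}$ with $v \in T^0$, $|v| = n$, produces a cover by $|T_n|$ sets of diameter $e^{-n}$. For any $\beta > \log \alpha$ this yields $\sum_{v \in T_n} (\textup{diam } B(v))^\beta = |T_n| e^{-n\beta} \to 0$, so the Hausdorff $\beta$-measure of $\partial T$ vanishes and hence $\dim \partial T \leq \log \alpha$. For the lower bound, I would use $\pi$ to define weights $p(v) := \pi_{\tau(v)} \pi_{i_0}^{-1} \alpha^{-|v|}$ on $T^0$; the eigenvector equation $M\pi = \alpha\pi$ forces $\sum_{v \in T_n} p(v) = 1$ for every $n$, so these weights extend by Kolmogorov consistency to a Borel probability measure $\mu$ on $\partial T$ with $\mu(B(v)) = p(v) \leq C\,(\textup{diam } B(v))^{\log \alpha}$, where $C = (\max_i \pi_i)/\pi_{i_0}$. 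The mass distribution principle then gives $\dim \partial T \geq \log \alpha$, completing the proof.

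The main obstacle is handling a reducible transition matrix $M$: one must identify the strongly connected component of $M$ whose spectral radius equals $\alpha$ and arrange for the Perron eigenvector to be strictly positive on the indices that actually appear below $i_0$, perhaps by first passing to the subtree spanned by classes reachable from $i_0$ and quotienting out any transient components. Once this spectral bookkeeping is carried out, the cover estimate and the mass-distribution estimate are both routine.
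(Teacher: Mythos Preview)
Your argument is correct: the transition-matrix construction, the Perron--Frobenius step, the cylinder-cover upper bound, and the Frostman measure built from the right Perron eigenvector all go through, and the reducibility issue you flag is genuine but can be handled exactly as you indicate (restrict to classes reachable from $i_0$; a final basic class in the sense of Seneta is then reachable from $i_0$, and the associated right eigenvector is strictly positive there).

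The paper, however, takes a completely different route: it simply invokes two results from Lyons--Peres, namely the general identity $\dim \partial T = \log \br T$ (valid for arbitrary locally finite trees, with $\br T$ the branching number) together with \cite[Theorem~3.8]{LyoPer}, which for periodic trees gives $\br T = \ugr T = \lgr T$. So the paper's proof is a two-line citation, while yours is a self-contained spectral argument that bypasses the branching number entirely. What your approach buys is that it makes the finite-state structure of periodic trees explicit and shows directly why the limit $\lim_n |T_n|^{1/n}$ exists (as the spectral radius of the type matrix); this is in fact close in spirit to how Lyons--Peres themselves establish Theorem~3.8. What the paper's approach buys is brevity and a clean separation of concerns: the hard analytic fact $\dim \partial T = \log \br T$ is isolated once and for all, and the periodic case then reduces to a purely combinatorial statement about $\br T$.
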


\begin{proof}
	This follows from \cite[Theorem 3.8]{LyoPer} combined with the fact
	\[ \dim \partial T = \log \br T, \]
	where $\br T$ is the branching number of $T$ (see \cite[Section 3]{LyoPer}). 
\end{proof}

\subsubsection{The connection with critical inverse temperatures}

Recall that the opposite graph of a directed graph $E$ is the directed graph $E^{\rm op}$ obtained from $E$ by reversing the direction of every edge.

\begin{definition}[cf. {\cite[p.82]{LyoPer}}]
\label{def:dircover}
	For a directed graph $E$ and $v \in E^0$, let $T(E,v)$ denote the \emph{directed cover of $E^{\mathrm{op}}$ based at $v$}. That is, $T(E,v)$ is the rooted tree defined as follows:

	\begin{itemize} 
		\item The vertex set of $T(E,v)$ is $E^*v$, the set of all finite paths in $E$ which end at $v$. 
		\item The root of $T(E,v)$ is $v$ (it is the path of length $0$). 
		\item For $x,y \in T(E,v)^0 = E^*v$, there is an edge $e \in T(E,v)^1$ with $s(e)=x, r(e)=y$ if and only if $y=ex$ for some edge $e \in E^1$. 
	\end{itemize}
\end{definition}

Note that $T(E,v)$ is locally finite if $v \in E^0$ can not be reached from any vertex $w$ that receives infinitely many edges.

If $E$ is a finite graph, then $T(E,v)$ is a periodic tree. Indeed, for each vertex $w$ of $E$ such that there is a path from $w$ to $v$, take $\mu_{w}$ to be any such path. Then, for any $x \in T(E,v)^0$, $T(E,v)_x$ is isomorphic to $T(E,v)_{\mu_{s(x)}}$, which means that the periodicity is satisfied by the constant $N= \max_{w} |\mu_w|$. 

\begin{proposition}
	\label{prop:betac=dim}
Let $E$ be a directed graph and let $v \in E^0$ with $\beta_v<\infty$. 
	\begin{enumerate}[\upshape(i)]
		\item We have  $\beta_v = \log \ugr T(E,v)$. 
		\item If $E$ is a finite graph, then we have 
		$\beta_v = \dim \partial T(E,v)$.
	\end{enumerate}
\end{proposition}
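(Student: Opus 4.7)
The plan is to reduce both statements to a straightforward application of the Cauchy--Hadamard formula combined with the periodicity lemma already recalled in the paper. The starting observation is purely combinatorial: by the definition of $T(E,v)$, a vertex at distance $n$ from the root is precisely a finite path in $E$ of length $n$ ending at $v$. Consequently, for every $n\geq 0$,
\[
\bigl|\{w\in T(E,v)^0 : |w|=n\}\bigr| \;=\; |E^n v|,
\]
and therefore
\[
\ugr T(E,v) \;=\; \limsup_{n\to\infty} |E^n v|^{1/n}.
\]

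For part (i), I would interpret $Z_v(\beta)=\sum_{n=0}^\infty |E^nv|\,e^{-\beta n}$ as a power series in the variable $z=e^{-\beta}$ with nonnegative coefficients $|E^n v|$. The Cauchy--Hadamard theorem gives that its radius of convergence equals $\bigl(\limsup_n |E^nv|^{1/n}\bigr)^{-1}=(\ugr T(E,v))^{-1}$. Thus $Z_v(\beta)<\infty$ if and only if $e^{-\beta}<(\ugr T(E,v))^{-1}$, i.e. $\beta>\log\ugr T(E,v)$. Taking the infimum over such $\beta$ yields $\beta_v=\log\ugr T(E,v)$, where we use the usual convention $\log 0=-\infty$ to cover the degenerate case in which $T(E,v)$ is finite. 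The hypothesis $\beta_v<\infty$ ensures $\ugr T(E,v)<\infty$, which together with local finiteness of $E$ at the relevant vertices guarantees that $T(E,v)$ is a locally finite rooted tree, so the framework of Section~\ref{ssec:geom} applies.

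For part (ii), the paper already observes, just before the statement, that if $E$ is a finite graph then $T(E,v)$ is a periodic rooted tree (one can take the constant $N=\max_w|\mu_w|$, where $\mu_w$ is a choice of path from $w$ to $v$ for each $w$ from which $v$ is reachable). Assuming $T(E,v)$ is infinite, Lemma~\ref{lem:minkowski} applies and gives
\[
\dim\partial T(E,v) \;=\; \log\ugr T(E,v) \;=\; \log\lgr T(E,v),
\]
and combining this with part (i) yields $\beta_v=\dim\partial T(E,v)$. (If $T(E,v)$ happens to be finite then $\partial T(E,v)=\emptyset$ and both sides are $-\infty$ under the natural convention for the empty supremum in Definition~\ref{def:Hausdorff}.)

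There is no real obstacle here: the content of the proposition is that after the right dictionary ``paths of length $n$ ending at $v$ $\longleftrightarrow$ vertices of $T(E,v)$ at depth $n$'' is installed, (i) is the Cauchy--Hadamard formula and (ii) is Lemma~\ref{lem:minkowski}. The only mild point to check is that $T(E,v)$ is indeed locally finite under the standing hypotheses, which is automatic once $\beta_v<\infty$ forces $|E^nv|<\infty$ for all $n$ (and in particular for $n=1$ at each descendant).
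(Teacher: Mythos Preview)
Your proof is correct and follows essentially the same approach as the paper: identify $Z_v(\beta)$ with the power series $\sum_n |E^nv|\,z^n$ at $z=e^{-\beta}$, apply the Cauchy--Hadamard formula to get $\beta_v=\log\ugr T(E,v)$, and then invoke Lemma~\ref{lem:minkowski} (via periodicity of $T(E,v)$ for finite $E$) for part (ii). The only cosmetic difference is that the paper phrases the convergence/divergence dichotomy as ``converges if $-\beta<\log R$, diverges if $-\beta>\log R$'' rather than your ``if and only if'', which is slightly more careful at the boundary but immaterial for the infimum.
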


\begin{proof}
	Let $T=T(E,v)$. The convergence condition of $Z_v(\beta)$ for $\beta>\beta_v$ implies that the rooted tree  $T$ is locally finite. 
	Let 
	\[ a_n := |\{v \in T \colon |v|=n\}|, \]
	and let $R$ be the radius of convergence of the power series $f(z) := \sum_{n=0}^\infty a_n z^n$. By Hadamard's formula, we have 
	\[ \frac1R = \limsup_{n \to \infty} a_n^{1/n} = \ugr T. \]
	Since $Z_v(\beta) = f(e^{-\beta})$, $Z_v(\beta)$ converges if $-\beta < \log R$ and diverges if $-\beta > \log R$. Hence, 
	\[ \beta_v = \log \frac1R = \log \ugr T. \]
	The latter half of the claim follows from Lemma~\ref{lem:minkowski}. 
\end{proof}

As we have seen in Proposition \ref{prop:betac=dim}, if $E$ is a finite graph, the abscissa of convergence of $Z_v(s)$ for any $v \in E^0$ is equal to the dimension of $\partial T(E,v)$. For infinite graphs, in fact, the critical inverse temperature is related to another dimension. 
\begin{definition} \cite[p.512]{LyoPer}
    Let $T$ be a locally finite infinite tree. Then, \emph{the upper Minkowski dimension of $\partial T$} is defined by
    \[ \dim^M \partial T = \limsup_{\varepsilon \to +0} \frac{\log N(\partial T, \varepsilon)}{- \log \varepsilon}, \]
    where $N(\partial T, \varepsilon)$ is the minimum cardinality of covers consisting of balls $B$ with $ \textup{diam\,} B \leq \varepsilon$. Here, we consider the metric on $\partial T$ from Definition \ref{def:Hausdorff}. 
\end{definition}
Let $T$ be a locally finite infinite tree, and let $T'$ be a tree obtained by removing all vertices $v \in T^0$ such that there are no infinite paths which go through $v$. If 
\begin{equation}
   \label{eqn:Minkowski}
   \ugr T = \ugr T', 
\end{equation}
then we can see that $\log \ugr T = \dim^M \partial T.$
The condition \eqref{eqn:Minkowski} says that $T$ does not have ``too many sinks". Hence, if a graph $E$ does not have ``too many sources" connecting to $v \in E^0$, then we have 
\[ \beta_v = \dim^M \partial T(E,v) \]
under the same assumption as Proposition \ref{prop:betac=dim}. 
Note that $\dim T = \dim^M T$ if $T$ is periodic by Lemma \ref{lem:minkowski}. 
\begin{example}
    We give two sufficient conditions to have the condition \eqref{eqn:Minkowski}. Let $T$ be a locally finite infinite tree. Then, if we have
    \[ \limsup_{n \to \infty} \frac{|\{v \in T^0 \mid |v|=n \mbox{ and there are no infinite paths going through $v$} \}|}{|\{v \in T^0 \mid |v|=n \}|} < 1, \]
    then the condition \eqref{eqn:Minkowski} is satisfied. Moreover, if we have 
    \[ \sup_{n \in \Nz} |\{v \in T^0 \mid |v|=n \mbox{ and $v$ is a sink}\}| < \infty, \]
    then the condition \eqref{eqn:Minkowski} is also satisfied. 
\end{example}

\section{Type III KMS states of finite graph algebras}
\label{sec:finitegraphs&type}

We have visited KMS$_\beta$ states for $\beta>\beta_v$, and we geometrically described the critical inverse temperatures $\beta_v$. These KMS states are of type I, which enables us to define partition functions. 
In this section, we observe the behaviour of KMS states at the critical inverse temperature for finite graphs. KMS states on Toeplitz algebras of finite graphs have been carefully studied in \cite{aHLRS13,aHLRS15}. These efforts culminate in \cite[Theorem~5.3]{aHLRS15} which provides a detailed description of all KMS$_\beta$ states of $(\T C^*(E),\alpha)$, including a description of which KMS states factor through quotients of $\T C^*(E)$.
The main result of this section is Theorem \ref{thm:typedetermination}, which is a generalisation of type computations for C*-algebras of strongly connected finite  graphs equipped with the gauge action (see, for instance, \cite[Theorem 3.1(2)(b)]{LLNSW} or \cite[Theorem 5]{Th19}). 
Consequently, KMS states at critical inverse temperatures are of type III$_\lambda$, and the $\lambda$-parameter is related to the value of critical inverse temperatures. This completes the analysis from \cite{aHLRS13,aHLRS15} of the KMS structure of the C*-dynamical system $(\T C^*(E),\alpha)$ in the case where $E$ is a finite graph.

For any finite graph $E$ and $v \in E^0$, we have $\beta_v \in \{-\infty\} \cup [0,\infty)$. If $\beta_v = -\infty$, then the admissible triple corresponding to $\varphi_{\beta,v}$ for some $\beta$ is finite-dimensional. If $\beta_v = 0$, then KMS states at $\beta_v$ are tracial. Hence, we restrict our attention to the case $\beta_v \in (0, \infty)$. 

\subsection{Preliminaries on finite graphs}

We begin this section by preparing notations and establishing fundamental lemmas. Although the main interest in this section is finite graphs, most of definitions and lemmas are formulated for arbitrary countable graphs. 

For a countable graph $E$, and for a subset $S$ of $E^0$,
\begin{itemize}
\item $E_S$ denotes the restriction of $E$ to $S$. That is, 
\[ E^0_S=S,\ E^1_S = \{ e \in E^1 \colon s(e), r(e) \in S\}. \] 
Note that $E^0_S$ and $E^1_S$ can be empty.
\item The adjacency matrix of $E$ is denoted by $A_E$, and the adjacency matrix of $E_S$ is denoted by $A_S$. Note that adjacency matrices are of infinite size if $E^0$ is not finite. 
\end{itemize}

In addition, for a finite graph $E$ and a subset $S$ of $E^0$, 
\begin{itemize}
\item Define a projection $p_S \in \T C^*(E)$ by $p_S := \sum_{v \in S} p_v$. 
\item Define $\cZ(S) := \bigcup_{w \in S} \cZ(w)$ and $\cZ'(S) := \bigcup_{w \in S} \cZ'(w)$. 
\end{itemize}

\begin{definition}
Let $E$ be a countable graph. 
A subset $S$ of $E^0$ is called \emph{hereditary} if for any $v \in S$ and $w \in E^0$, $vE^*w \neq \emptyset$ implies $w \in S$. For a subset $S$ of $E^0$, the \emph{hereditary closure} of $S$ is the smallest hereditary subset of $E^0$ containing $S$. The hereditary closure of $S$ is denoted by $\bar{S}$. Concretely, we have 
\[ \bar{S} = \{ w \in E^0 \mid SE^*w \neq \emptyset \}.  \]
\end{definition}

\begin{definition}
Let $E$ be a countable graph. 
Define an equivalence relation $\sim$ on $E^0$ by 
\[ v \sim w \Longleftrightarrow vE^*w \neq \emptyset \mbox{ and } wE^*v \neq \emptyset. \]
An equivalence class $C \subseteq E^0$ with respect to $\sim$ is called a \emph{strongly connected component} of $E$. The set of all strongly connected components of $E$ is denoted by $\pi(E)$. 
\end{definition}

In this section, we consider $\pi(E)$ for a possibly empty graph $E$ (that is, $E^0=\emptyset$). If $E$ is an empty graph, then $\pi(E)=\emptyset$. Moreover,
the restriction of a graph $E$ to $S$ is always taken when $S$ is hereditary or the complement of $S$ is hereditary. In such cases, the next lemma is often applied. 

\begin{lemma} \label{lem:restriction}
Let $E$ be a countable graph and let $S \subseteq E^0$. Suppose that $S$ is hereditary or $E^0\setminus S$ is hereditary. Then, we have 
\[ \pi(E_S) \subseteq \pi(E), \]
which means that any strongly connected component in $E_S$ is a strongly connected component in $E$. 
\end{lemma}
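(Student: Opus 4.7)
The plan is to take an arbitrary $C\in\pi(E_S)$, fix $v\in C$, and show that the $\sim$-equivalence class of $v$ in $E$ coincides with $C$. The containment $C\subseteq[v]_E$ is automatic because any path inside $E_S$ is in particular a path in $E$, so for $w\in C$ we have $vE_S^*w\neq\emptyset$ and $wE_S^*v\neq\emptyset$, hence $vE^*w\neq\emptyset$ and $wE^*v\neq\emptyset$. The content of the lemma is therefore the reverse containment $[v]_E\subseteq C$.

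So fix $w\in E^0$ with $v\sim w$ in $E$, and pick paths $\mu\in vE^*w$ and $\nu\in wE^*v$. I would argue in two parallel cases. Suppose first that $S$ is hereditary. Since $v\in S$ and $\mu$ starts at $v$, hereditarity of $S$ forces every vertex visited by $\mu$ to lie in $S$; in particular $w\in S$, and $\mu$ is a path in $E_S$. Applying hereditarity again to $w\in S$ and the path $\nu$, every vertex of $\nu$ lies in $S$, so $\nu$ is also a path in $E_S$. Therefore $v\sim w$ in $E_S$, and $w\in C$. Now suppose $E^0\setminus S$ is hereditary. If $w$ were in $E^0\setminus S$, then hereditarity applied to $w$ and $\nu\in wE^*v$ would place $v$ in $E^0\setminus S$, contradicting $v\in S$; hence $w\in S$. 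For the path $\mu\in vE^*w$, if some intermediate vertex $u$ of $\mu$ were in $E^0\setminus S$, the subpath of $\mu$ from $u$ to $w$ would force $w\in E^0\setminus S$ by hereditarity, again a contradiction; so $\mu$ lies entirely in $S$. The same argument applied to $\nu$ shows $\nu$ lies in $S$. Thus $v\sim w$ in $E_S$ and $w\in C$.

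Combining the two cases gives $[v]_E=C$, which means $C$ is itself a $\sim$-equivalence class in $E$, i.e.\ $C\in\pi(E)$. There is no serious obstacle here; the only thing to be mindful of is that one needs to check not merely that $w\in S$, but that the paths witnessing $v\sim w$ can be chosen to stay in $S$, which is precisely what hereditarity (of $S$ or of its complement) provides. The empty case $E^0_S=\emptyset$ is vacuous since then $\pi(E_S)=\emptyset$.
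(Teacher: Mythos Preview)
Your proof is correct and follows essentially the same approach as the paper's: both take $C\in\pi(E_S)$, observe that $C$ is contained in some equivalence class $C'\in\pi(E)$, and then use hereditarity to rule out $C\subsetneq C'$. Your version is more explicit than the paper's---you spell out why the witnessing paths $\mu,\nu$ lie entirely in $S$ and you treat the case where $E^0\setminus S$ is hereditary in full, whereas the paper handles only the first case and declares the second ``similar''---but the underlying argument is the same.
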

\begin{proof}
Suppose that $S$ is hereditary, and suppose $C \in \pi(E_S)$. Since all vertices in $C$ are equivalent in $E$, there exists $C' \in \pi(E)$ such that $C \subseteq C'$. 
If $C \subsetneq C'$, then there exists $v \in E^0 \setminus S$ such that $C E^*v \neq \emptyset$, which contradicts the assumption that $S$ is a hereditary subset. 
Hence we have $C=C' \in \pi(E)$. The proof of the case that $E^0\setminus S$ is hereditary is similar. 
\end{proof}

Lemma \ref{lem:restriction} will often be applied to the subset appearing in the next lemma. 
\begin{lemma} \label{lem:hereditary}
Let $E$ be a countable graph. For any $v \in E^0$, the subset $E^0\setminus s(E^*v)$ is hereditary. 
\end{lemma}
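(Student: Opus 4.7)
The plan is to prove this lemma by a direct contrapositive argument, unwinding the definitions of the source set $s(E^*v)$ and of hereditariness. The key observation is that $s(E^*v) = \{w \in E^0 : wE^*v \neq \emptyset\}$, i.e.\ $s(E^*v)$ consists of exactly those vertices from which $v$ is reachable by a finite path in $E$. Thus a vertex $u$ lies in $E^0\setminus s(E^*v)$ if and only if there is no finite path from $u$ to $v$.

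With this reformulation, the statement becomes transparent. To verify hereditariness of $E^0\setminus s(E^*v)$, I would take $u \in E^0\setminus s(E^*v)$ and $w \in E^0$ with $uE^*w \neq \emptyset$, and show $w \in E^0\setminus s(E^*v)$. The argument is by contradiction: if $w \in s(E^*v)$, pick $\mu \in uE^*w$ and $\nu \in wE^*v$; then the concatenation $\mu\nu$ lies in $uE^*v$, contradicting $u \notin s(E^*v)$. Here one uses the convention (already set up earlier in the paper) that vertices are treated as paths of length $0$, so the argument also covers the degenerate cases $w = u$ or $w = v$.

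I do not expect any substantive obstacle: the proof is essentially a one-line transitivity-of-reachability argument, and no additional machinery (groupoid models, KMS theory, Lemma~\ref{lem:restriction}, etc.) is needed. The only minor care required is to handle path concatenation correctly when one of the paths is a trivial vertex path, which the paper's conventions already accommodate.
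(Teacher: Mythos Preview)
Your proposal is correct and essentially identical to the paper's own proof: both argue by contradiction that if $u \notin s(E^*v)$ and $uE^*w \neq \emptyset$ but $w \in s(E^*v)$, then concatenating paths gives $uE^*v \neq \emptyset$. The paper's version is slightly terser (it does not name the paths $\mu,\nu$ explicitly), but the logic is the same transitivity-of-reachability argument you describe.
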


\begin{proof}
Let $S = E^0\setminus s(E^*v)$, and let $w \in S$. Let $w' \in E^0$ be such that $wE^*w' \neq \emptyset$. If $w' \in s(E^*v)$, then we would have $wE^*v \neq \emptyset$ and hence $w \in s(E^*v)$, which is a contradiction. Hence, we have $w' \in S$, which implies that $S$ is hereditary. 
\end{proof}

Next, we recall the notion of harmonic vector that has been introduced by Thomsen \cite{Th17Adv}, which enables us to reduce problems of KMS states on $C^*(E)$ to problems in linear algebra. This idea is traditionally used in the study of KMS states of graph algebras, especially in order to apply the Perron--Frobenius theorem (see \cite{EFW} or \cite[Proposition 4.1]{aHLRS13}). 

\begin{definition}[{\cite[Lemma 2.6]{Th17Adv}}]\label{def:harmonicvector}
Let $\beta \in \Rz$, and 
let $E$ be a countable graph and let $A_E := [a_{vw}]_{v,w \in E^0}$ be its adjacency matrix. 
A nonzero vector $\harm = [\harm_v]_{v \in E^0}$ is called a \emph{$\beta$-harmonic vector} if it satisfies 
\[ \sum_w a_{vw}\harm_w = e^\beta \harm_v \mbox{ for every $v \in E^0$}. \]
\end{definition}
In particular, $\beta$-harmonic vectors are the same as nonnegative eigenvectors of $A_E$ with eigenvalue $e^\beta$. A $\beta$-harmonic vector $\harm$ is said to be \emph{normalized} if $\sum_v \harm_v = 1$. 
Note that we only consider the gauge action, whereas Thomsen considers more general time evolutions. 

We shall use the following form of a result by Thomsen:
\begin{theorem}[{\cite[Theorem 2.7]{Th17Adv}}] \label{thm:Thomsen2}
Let $\beta \in \Rz$. Then, there is a bijection from the set of $e^\beta$-conformal probability measures concentrated in $E^\infty$ to the set of normalised $\beta$-harmonic vectors defined by \[ m \mapsto \harm_m := [ m(\cZ'(v)) ]_{v \in E^0}.\]
\end{theorem}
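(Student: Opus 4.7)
The plan is to prove the bijection by constructing an explicit inverse and verifying that both directions preserve the relevant structure. Since this is essentially a reformulation of \cite[Theorem 2.7]{Th17Adv}, I would give a direct proof exploiting the groupoid description of conformality from Section~\ref{sec:KMSqi}.

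First I would verify that $m \mapsto \harm_m$ is well-defined, i.e.\ that $\harm_m$ is a normalised $\beta$-harmonic vector. Normalisation is immediate since $m(E^\infty)=\sum_{v\in E^0} m(\cZ'(v))=1$ by the assumption that $m$ is concentrated on $E^\infty$. For the harmonicity condition, apply the $e^\beta$-conformality to the open bisection $U_{e,r(e)}\subseteq \G_E$ for each $e\in E^1$. The associated homeomorphism sends $ex\mapsto x$, and since $c(ex,1,x)=1$, the Radon--Nikodym relation yields $m(\cZ'(e)) = e^{-\beta} m(\cZ'(r(e)))$. Using the partition (modulo a $m$-null set, as $m$ sits on $E^\infty$)
\[
\cZ'(v) = \bigsqcup_{e\in vE^1} \cZ'(e),
\]
and summing gives $m(\cZ'(v)) = e^{-\beta}\sum_{w\in E^0} a_{vw}\, m(\cZ'(w))$, which rearranges to the defining equation of a $\beta$-harmonic vector.

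Next, I would establish injectivity. Iterating the conformality relation yields $m(\cZ'(\mu)) = e^{-\beta|\mu|} m(\cZ'(r(\mu)))$ for every $\mu\in E^*$. Since $m$ is concentrated on $E^\infty$ and the cylinder sets $\{\cZ'(\mu)\}_{\mu\in E^*}$ generate the Borel $\sigma$-algebra on $E^\infty$, the measure $m$ is determined by its values on the vertices, i.e.\ by $\harm_m$.

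For surjectivity, given a normalised $\beta$-harmonic vector $\harm$, I would define a premeasure on the semiring of cylinder sets by $m(\cZ'(\mu)) := e^{-\beta|\mu|}\harm_{r(\mu)}$ for $\mu\in E^*$. The harmonic equation ensures consistency with $\cZ'(\mu) = \bigsqcup_{e\in r(\mu)E^1}\cZ'(\mu e)$:
\[
\sum_{e\in r(\mu)E^1} e^{-\beta(|\mu|+1)}\harm_{r(e)} = e^{-\beta(|\mu|+1)}\sum_{w\in E^0} a_{r(\mu),w}\harm_w = e^{-\beta|\mu|}\harm_{r(\mu)}.
\]
By Carath\'eodory's extension theorem applied to the compact totally disconnected space $E^\infty$, this extends to a probability measure on $E^\infty$ (with $m(E^\infty)=\sum_v \harm_v =1$), and then by zero to $E^{\leq\infty}$. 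Finally, I would verify $e^\beta$-conformality: it suffices to check the Radon--Nikodym condition on the basic bisections $U_{\mu,\nu}$, where it reduces to the identity $m(\cZ'(\mu x)) = e^{-\beta(|\mu|-|\nu|)} m(\cZ'(\nu x))$, which is a direct consequence of the formula defining $m$.

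The main subtlety will be handling vertices $v$ that do not emit any edges, or more generally, from which no infinite paths emanate: in such cases the harmonicity equation forces $\harm_v=0$ (since the left-hand side is a sum over $vE^1$), so these vertices contribute nothing to the construction and the decomposition of $\cZ'(v)$ into edge cylinders is vacuously consistent with the zero measure on $\cZ'(v)\cap E^\infty$.
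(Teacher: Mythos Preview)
Your argument is essentially correct and considerably more detailed than the paper's own proof, which simply cites \cite[Theorem~2.7]{Th17Adv} (noting that the last line of \cite[Lemma~2.6]{Th17Adv} is used) without reproducing any of the construction. Your direct approach---deriving the harmonic equation from conformality on the basic bisections $U_{e,r(e)}$, deducing injectivity from the fact that the cylinders $\cZ'(\mu)$ form a $\pi$-system generating the Borel $\sigma$-algebra on $E^\infty$, and building the inverse via an extension theorem---is the natural self-contained route and makes the role of the harmonic-vector relation transparent.

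There is one imprecision in the surjectivity step: you invoke Carath\'eodory ``applied to the compact totally disconnected space $E^\infty$'', but $E^\infty$ is not compact in general. When $E$ fails to be row-finite, a finite path ending at an infinite emitter is not isolated in $E^{\leq\infty}$, so $E^*$ is not open, $E^\infty$ is not closed, and the cylinders $\cZ'(\mu)$ need not be compact; the usual compactness argument for $\sigma$-additivity of the premeasure then does not apply. The fix is routine: $E^\infty$ is a $G_\delta$ subset of the Polish space $E^{\leq\infty}$, hence Polish, and the consistency identity you verified is exactly the compatibility needed to run Kolmogorov's (or Ionescu--Tulcea's) extension theorem on the projective system of discrete probability spaces $(E^n,m_n)$ with $m_n(\mu):=e^{-\beta n}\harm_{r(\mu)}$. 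Alternatively, one checks $\sigma$-additivity directly by observing that any countable cylinder partition of $\cZ'(\mu)$ refines level by level, with the harmonic equation giving additivity at each level. Either way the strategy stands; only the appeal to compactness needs replacing.
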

\begin{proof}
The restriction of the map in \cite[Theorem 2.7]{Th17Adv} yields the bijection. Note that the last line of \cite[Lemma 2.6]{Th17Adv} is applied here. 
\end{proof}

We say that a KMS$_\beta$ state $\varphi$ of $\T C^*(E)$ or $C^*(E)$ is \emph{supported in $E^\infty$} if the $e^\beta$-conformal measure associated to $\varphi$ is concentrated in $E^\infty$. Note that for $\beta>0$, an extremal KMS$_\beta$ state $\varphi$ that is supported in $E^\infty$ cannot be of type I by Theorem~\ref{thm:Toeplitz}.

\subsection{Description of critical inverse temperatures}

We have observed a description of critical inverse temperatures in Section \ref{ssec:geom} for arbitrary countable graphs. In this subsection, we give a finer analysis of critical inverse temperatures in the case of finite graphs. For finite graphs, there is another description of critical inverse temperatures in \cite[Corollary 5.8, Lemma 7.2]{aHLRS15}. 
The aim of this subsection is to clarify the relation between our approach and \cite{aHLRS15}. Concretely, we give an alternative direct proof of \cite[Corollary 5.8]{aHLRS15} using results from Section \ref{sec:toeplitz}. 

The next lemma follows from \cite[Theorem 5.3]{aHLRS15}, but we give a direct proof. 

\begin{lemma} \label{lem:KMSlimit}
Let $E$ be a finite graph and let $v\in E^0$. If $\beta_v \geq 0$, then there exists a KMS$_{\beta_v}$ state $\phi$ on $\T C^*(E)$ supported in $E^\infty$. 
\end{lemma}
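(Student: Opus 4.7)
The plan is to construct a normalised $\beta_v$-harmonic vector and then invoke Theorem~\ref{thm:Thomsen2} to produce an $e^{\beta_v}$-conformal probability measure $m$ concentrated on $E^\infty$; the desired KMS state will be $\phi := m \circ \Phi$, where $\Phi$ is the canonical conditional expectation from $\T C^*(E) \cong C^*(\G_E)$ onto $C_0(E^{\leq\infty})$ (so that $\phi \in \KMS_{\beta_v}(\T C^*(E),\alpha)$ by the discussion following Proposition~\ref{prop:KumRen}, and $\phi$ is automatically supported in $E^\infty$).

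First I would set $S := s(E^*v)$, so that $E^0\setminus S$ is hereditary by Lemma~\ref{lem:hereditary}. I would then identify $\rho(A_S) = e^{\beta_v}$: since any path of length $n$ ending at $v$ lies entirely within $S$, we have $|E^nv| = \sum_{w\in S}(A_S^n)_{w,v}$; the upper bound $\beta_v \leq \log\rho(A_S)$ is immediate from $\|A_S^n\|^{1/n}\to\rho(A_S)$, while the matching lower bound comes from selecting via Perron--Frobenius a strongly connected component $C_0\subseteq S$ that attains $\rho(A_S)$, fixing some $w_0\in C_0$ together with a $w_0$-to-$v$ path of some fixed length $k$ in $E$, and then using that $(A_{C_0}^n)_{w_0,w_0}$ grows at least like $\rho(A_{C_0})^n$ along a subsequence. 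Applying Perron--Frobenius to the finite non-negative matrix $A_S$ then yields a non-zero non-negative eigenvector $\harm^S$ with eigenvalue $\rho(A_S) = e^{\beta_v}$, which I extend by zero to a vector $\harm$ on $E^0$.

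To verify that $\harm$ is $\beta_v$-harmonic for $A_E$, I would split into two cases: for $v'\in S$, edges from $v'$ landing in $E^0\setminus S$ contribute nothing because $\harm$ vanishes there, so the defining equation reduces to the eigenvector equation for $A_S$; for $v'\notin S$, the heredity of $E^0\setminus S$ forces every successor of $v'$ to lie in $E^0\setminus S$, so both sides of the harmonicity equation vanish. After normalising $\harm$ (possible since $E^0$ is finite and $\harm\neq 0$), Theorem~\ref{thm:Thomsen2} produces the $e^{\beta_v}$-conformal probability measure $m$ concentrated in $E^\infty$, and $\phi := m\circ\Phi$ is then the desired KMS$_{\beta_v}$ state on $\T C^*(E)$ supported in $E^\infty$. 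The main technical obstacle is the identification $\rho(A_S) = e^{\beta_v}$, which rests on Perron--Frobenius theory for reducible non-negative matrices combined with careful path-counting; the remaining steps (extension by zero, normalisation, invocation of Thomsen's theorem) are routine given the machinery developed earlier in Section~\ref{sec:finitegraphs&type}.
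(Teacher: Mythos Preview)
Your argument is correct, but it proceeds quite differently from the paper's proof. The paper obtains $\phi$ as a weak* accumulation point of the type~I states $\phi_{v,\beta}$ as $\beta\searrow\beta_v$ (using that $\T C^*(E)$ is unital since $E$ is finite), and then shows that the limiting measure is concentrated on $E^\infty$ by first establishing $Z_v(\beta_v)=\infty$ via the periodic-tree machinery of Section~\ref{ssec:geom}. Your approach is instead a direct construction: you front-load the identification $\rho(A_S)=e^{\beta_v}$ for $S=s(E^*v)$, apply the weak Perron--Frobenius theorem to $A_S$, extend by zero, and invoke Theorem~\ref{thm:Thomsen2}. This is more constructive and avoids any limiting argument, but note that the identity $\rho(A_S)=e^{\beta_v}$ is essentially the content of Proposition~\ref{prop:tree} and Theorem~\ref{thm:aHLRSmain}, which in the paper's logical order are \emph{deduced from} Lemma~\ref{lem:KMSlimit}; your route therefore reverses the flow of the argument and duplicates work that the paper carries out immediately afterwards. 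You also implicitly use Lemma~\ref{lem:seneta}, which in the paper appears just after Lemma~\ref{lem:KMSlimit} --- this is not circular (Lemma~\ref{lem:seneta} is pure linear algebra) but would require reordering. A minor byproduct of the paper's approach that yours does not yield is the fact $Z_v(\beta_v)=\infty$.
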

\begin{proof}
First, we show $Z_v(\beta_v) = \infty$. Let $T=T(E,v)$, and let $A_n = \{ v \in T^0 \colon |v|=n \}$. Then, we have $Z_v(\beta) = \sum_{n=0}^\infty |A_n|e^{-n\beta}$ for $\beta>\beta_v$. Since $T$ is a periodic tree with $\br T = e^{\beta_v}$ by Proposition \ref{prop:betac=dim}, we have 
\[ \underset{\Pi}{\inf} \sum_{e \in \Pi} e^{-|r(e)|\beta_v} > 0 \]
by \cite[Theorem 3.8]{LyoPer}, where $\Pi \subseteq T^1$ runs through all cutsets of $T$. Applying this to $\Pi_n := r^{-1}(A_n)$, we obtain
\[ \underset{n \in \Nz}{\inf} |A_n|e^{-n\beta_v} >0, \]
which implies that $Z_v(\beta_v)=\infty$. 

Since $E$ is finite, $\T C^*(E)$ is unital. Therefore, $\KMS_{\beta}(\T C^*(E),\alpha)$ is compact by \cite[Theorem~5.3.30]{BR}, so that the net $(\phi_{v,\beta})_{\beta>\beta_v}$ has a convergent subnet, $(\phi_{v,\beta_i})_i$. Let $\phi :=\lim_i \phi_{v,\beta_i}$, and let $m$ be the $e^{\beta_v}$-conformal probability measure on $E^{\leq\infty}$ associated to $\phi$. Then $m_{v,\beta_i}\to m$ weak* (viewed as elements of the dual space of $C(E^{\leq\infty})$). 
Since $Z_v(\beta_v)=\infty$, we have
\[ m(\{\mu\}) = \lim_{i} m_{v,\beta_i}(\{\mu\}) = \lim_i \frac{e^{-\beta_i |\mu|}}{Z_v(\beta_i)} =0 \]
for any $\mu \in E^*v$, which implies that $m$ is concentrated in $E^\infty$. 
\end{proof}

We use a formula from linear algebra, which is a consequence of triangularisation of the adjacency matrix $A_E$ by permuting blocks. See \cite[Section 2.3 and  Section 4]{aHLRS15} for the proof. 
\begin{lemma} \label{lem:seneta}
Let $E$ be a finite graph. Then, we have
\[ \rho(A_E) = \max \{\rho(A_C) \mid C \in \pi(E) \}.  \]
\end{lemma}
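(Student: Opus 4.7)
The plan is to reduce the claim to the standard linear-algebra fact that the spectrum of a block triangular matrix is the union of the spectra of its diagonal blocks. First, I would introduce the relation $C \preceq C'$ on $\pi(E)$ defined by $C \preceq C'$ iff $C E^* C' \neq \emptyset$. This is well-defined independent of representatives, reflexive and transitive, and (because the $C$'s are equivalence classes of $\sim$) antisymmetric. Hence $\preceq$ is a partial order on the finite set $\pi(E)$, and I would pick any linear extension $C_1 \prec C_2 \prec \cdots \prec C_N$ of $\preceq$.

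Next, I would fix an enumeration of $E^0$ that lists all vertices of $C_1$ first, then all vertices of $C_2$, and so on up to $C_N$. With this ordering, I claim that $A_E$ is block lower triangular (with respect to the block structure induced by $\pi(E)$), the $(i,i)$-diagonal block being precisely $A_{C_i}$. Indeed, if $e \in E^1$ with $s(e) \in C_i$ and $r(e) \in C_j$ and $i<j$, then $C_i \preceq C_j$ while $C_i \neq C_j$, so by the ordering $e$ contributes to a strictly sub-diagonal block; the only edges contributing to the diagonal block at position $(i,i)$ are those with both endpoints in $C_i$, and these are, by definition, the edges of $E_{C_i}$, with adjacency matrix $A_{C_i}$.

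Now I invoke the standard fact (proved, e.g., by induction on the number of diagonal blocks, or by recognising $\det(\lambda I - A_E) = \prod_i \det(\lambda I - A_{C_i})$) that the multiset of eigenvalues of a block lower triangular matrix equals the disjoint union of the multisets of eigenvalues of its diagonal blocks. Since the spectral radius is the maximum modulus of an eigenvalue, this gives
\[
\rho(A_E) = \max_{1 \leq i \leq N} \rho(A_{C_i}) = \max\{ \rho(A_C) : C \in \pi(E) \},
\]
as desired.

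There is no real obstacle here; the only points that require a moment's care are verifying that $\preceq$ is genuinely a partial order (which follows immediately from the definition of $\pi(E)$ via $\sim$) and confirming that the diagonal blocks in the resulting triangularisation are exactly the $A_{C_i}$ rather than some larger submatrices. Both are routine bookkeeping, and both are essentially what is written in \cite[Section 2.3 and Section 4]{aHLRS15}.
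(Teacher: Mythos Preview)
Your approach is exactly the one the paper has in mind: the paper's proof simply cites \cite[Section~2.3 and Section~4]{aHLRS15} for the block triangularisation of $A_E$ along strongly connected components, and you have spelled out that argument in full. One small slip: with your ordering (list $C_1$ first, then $C_2$, etc.) and the convention $a_{vw}=|vE^1w|$, an edge from $C_i$ to $C_j$ with $i<j$ lands in the $(i,j)$ block, which is \emph{super}-diagonal, so $A_E$ is block \emph{upper} triangular rather than lower; this does not affect the conclusion.
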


For a square matrix $A$, $\rho(A)$ denotes the spectral radius of $A$. 
We establish a relation between $\rho(A_E)$ and critical inverse temperatures.
Note that a finite graph $E$ has at least one nontrivial cycle (that is, a  nontrivial path $\mu$ with $s(\mu)=r(\mu)$) if and only if there exists $v \in E^0$ such that $\beta_v \geq 0$. 
\begin{proposition} \label{prop:tree}
Let $E$ be a finite graph which contains at least one nontrivial cycle. 
We have
\begin{equation}
    \log \rho (A_E) = \max_{v \in E^0} \dim \partial T(E,v) = \max_{v \in E^0} \beta_v. 
\end{equation}
\end{proposition}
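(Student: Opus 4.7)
The plan is to reduce the claim to three ingredients: Proposition~\ref{prop:betac=dim}(ii), a simple generating-function identity, and Gelfand's formula for the spectral radius.

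The second equality $\max_{v} \dim \partial T(E,v) = \max_v \beta_v$ is immediate from Proposition~\ref{prop:betac=dim}(ii): since $E$ is finite, each $\beta_v$ is finite, and at least one $v$ satisfies $\beta_v \geq 0$ (if $\mu$ is a nontrivial cycle, then the paths $\mu^n$ certify $\beta_{s(\mu)} \geq 0$), so the maximum on each side is achieved at a vertex where $\beta_v = \dim \partial T(E,v)$.

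For the first equality $\log \rho(A_E) = \max_v \beta_v$, I would aggregate the fixed-target partition functions. Since $|E^n v|$ is the $v$-th column sum of $A_E^n$, summing over $v$ gives
\[
\sum_{v \in E^0} Z_v(\beta) \;=\; \sum_{n \geq 0} |E^n|\, e^{-n\beta},
\]
where $|E^n| = \sum_{w,v \in E^0}(A_E^n)_{w,v}$ is the entrywise sum of $A_E^n$. Since $E^0$ is finite, the left-hand side is a finite sum of nonnegative Dirichlet series, so its abscissa of convergence equals $\max_v \beta_v$. For the right-hand side, because $A_E$ is a nonnegative matrix acting on a finite-dimensional space, the entrywise sum of $A_E^n$ is comparable (up to multiplicative constants depending only on $|E^0|$) to any matrix norm of $A_E^n$; Gelfand's formula then gives $\lim_n |E^n|^{1/n} = \rho(A_E)$, so the abscissa of convergence of $\sum_n |E^n| e^{-n\beta}$ is $\log \rho(A_E)$. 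Comparing the two expressions yields the desired equality. Note that the cycle hypothesis ensures $\rho(A_E) \geq 1$, so both sides are nonnegative and the formula is nondegenerate.

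No serious obstacle is anticipated. The only minor subtlety is that vertices $v$ not reaching any cycle have $\beta_v = -\infty$ and their $T(E,v)$ are finite trees with empty boundary; but such vertices contribute $-\infty$ to both maxima, and the cycle hypothesis forces the maxima to be attained at vertices lying on (or flowing into) some strongly connected component, where all quantities are finite and the identification via Proposition~\ref{prop:betac=dim}(ii) applies cleanly.
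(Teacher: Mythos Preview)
Your argument is correct and takes a genuinely different route from the paper's. The paper proves the inequality $\log\rho(A_E)\ge\max_v\beta_v$ by invoking Lemma~\ref{lem:KMSlimit} to produce, for each $v$ with $\beta_v\ge 0$, a KMS$_{\beta_v}$ state supported in $E^\infty$, hence a $\beta_v$-harmonic vector, hence an eigenvalue $e^{\beta_v}$ of $A_E$; and it proves the reverse inequality by combining Theorem~\ref{thm:KMSwithgrowthcond} (all KMS$_\beta$ states are of type~I when $\sup_vZ_v(\beta)<\infty$) with the weak Perron--Frobenius theorem to rule out a harmonic vector at $\log\rho(A_E)$ above the supremal abscissa. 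Your proof bypasses all of this KMS machinery: the identity $\sum_vZ_v(\beta)=\sum_{n\ge 0}|E^n|e^{-n\beta}$, norm equivalence on finite-dimensional matrix spaces, and Gelfand's formula give $\lim_n|E^n|^{1/n}=\rho(A_E)$ directly, and the abscissa of a finite sum of nonnegative Dirichlet series is the maximum of the individual abscissas. This is more elementary and self-contained; the paper's approach, by contrast, exhibits $\log\rho(A_E)$ as a genuine phase-transition point for the KMS simplex, which is thematically central to the surrounding section but not logically necessary for the bare equality. One small linguistic slip: in your last paragraph, ``vertices not reaching any cycle'' should read ``vertices not reachable from any cycle'' (since $Z_v$ counts paths \emph{ending} at $v$); the mathematics is unaffected.
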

\begin{proof}
The second equality follows from Proposition \ref{prop:betac=dim}. 
For each $v\in E^0$ with $\beta_v \neq -\infty$, it follows from Lemma \ref{lem:KMSlimit} that there exists a $\beta_v$-harmonic vector, which implies that $e^{\beta_v}$ is an eigenvalue of $A_E$. 
Hence we have $\rho (A_E) \geq e^{\beta_v}$ for all $v \in E^0$.

Suppose that $\beta>0$ satisfies $\max_{v \in E^0} Z_v(\beta) < \infty$. Then, for any $\beta' \geq \beta$, all extremal KMS$_{\beta'}$ states are of type I by Theorem \ref{thm:graph}, which implies that there is no $\beta'$-harmonic vector. By the weak form of the Perron--Frobenius Theorem (see, for instance, \cite[Theorem~4.2]{Minc}), there exists a nonnegative eigenvector of $A_E$ with eigenvalue $\rho(A_E)$. After normalising, this vector becomes a $\log \rho(A_E)$-harmonic vector, so we must have $\log \rho(A_E) < \beta$.
Consequently, we have  
\[ \log \rho(A_E) \leq \tilde{\beta} := \inf \{\beta>0 \colon \max_{v \in E^0} Z_v(\beta) < \infty \}. \]
Then, there exists $v \in E^\infty$ such that $Z_v(\tilde{\beta}) = \infty$ (see the proof of Lemma \ref{lem:KMSlimit}). Hence, we have
\[ \log \rho(A_E) \leq \tilde{\beta} \leq \beta_v, \]
which completes the proof. 
\end{proof}

\begin{theorem}[{\cite[Corollary~5.8 and Lemma~7.2]{aHLRS15}}] \label{thm:aHLRSmain}
Let $E$ be a finite graph, and let $v \in E^0$ such that $\beta_v>0$. 
Then we have 
\[ \beta_v = \max \{\log \rho(A_C) \colon C\in \pi(E), CE^*v \neq \emptyset \}. \]
\end{theorem}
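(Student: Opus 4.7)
The plan is to reduce to a subgraph of $E$ where Proposition~\ref{prop:tree} applies directly. Let $S := s(E^*v) = \{w \in E^0 \colon wE^*v \neq \emptyset\}$ be the set of vertices that can reach $v$, and let $F := E_S$ be the restriction. The key observation is that any finite path in $E$ terminating at $v$ uses only vertices in $S$, because every vertex traversed still has a directed path to $v$ (namely, the tail of the path). Hence $E^*v = F^*v$, which immediately gives $Z_v^E(s) = Z_v^F(s)$ and therefore $\beta_v^E = \beta_v^F$.

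Next I would check that $F$ has at least one nontrivial cycle, so that Proposition~\ref{prop:tree} is applicable to $F$. If $F$ were a finite DAG, then $F^*v$ would be a finite set and $Z_v^F(\beta)$ would be a finite sum of exponentials, giving $\beta_v^F = -\infty$ and contradicting the hypothesis $\beta_v > 0$. I would then show that $v$ realises the maximum of $\beta_w^F$ over $w \in F^0$: for any $w \in S$, fix a path $\gamma \in wE^*v$ (which automatically lies in $F$ since every vertex it traverses is in $S$); for each $\mu \in F^n w$, the concatenation $\mu\gamma$ is a path in $F^{n+|\gamma|}v$, and the map $\mu \mapsto \mu\gamma$ is injective. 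Thus $|F^n w| \leq |F^{n+|\gamma|}v|$ and, for $\beta > 0$, $Z_w^F(\beta) \leq e^{|\gamma|\beta} Z_v^F(\beta)$, so $\beta_w^F \leq \beta_v^F$. Combining with Proposition~\ref{prop:tree} applied to $F$ and Lemma~\ref{lem:seneta},
\[
\beta_v^F \;=\; \max_{w \in F^0} \beta_w^F \;=\; \log\rho(A_F) \;=\; \max\{\log\rho(A_C) \colon C \in \pi(F)\}.
\]

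To finish, I would identify $\pi(F)$ with $\{C \in \pi(E) \colon CE^*v \neq \emptyset\}$. By Lemma~\ref{lem:hereditary}, $E^0 \setminus S$ is hereditary, so Lemma~\ref{lem:restriction} yields $\pi(F) \subseteq \pi(E)$, and plainly every $C \in \pi(F)$ is contained in $S$, hence satisfies $CE^*v \neq \emptyset$. Conversely, if $C \in \pi(E)$ with $CE^*v \neq \emptyset$, then some (hence every) vertex of $C$ lies in $S$ by strong connectivity in $E$ followed by concatenation with a path to $v$; moreover the equivalence class of any $w \in C$ inside $F$ coincides with $C$ itself, since the paths in $E$ witnessing strong connectivity within $C$ stay inside $C \subseteq S$ (any vertex they visit can reach $w$ and is reached from $w$, so it is equivalent to $w$ in $E$ and thus in $C$), hence they are paths in $F$.

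The only mildly delicate step is the last verification, namely that a strongly connected component of $E$ contained in $S$ remains strongly connected when the graph is restricted to $S$; the rest of the argument is essentially a bookkeeping reduction to Proposition~\ref{prop:tree} and Lemma~\ref{lem:seneta}. I expect no serious obstacle beyond that.
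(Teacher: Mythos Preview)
Your proof is correct and follows essentially the same route as the paper: restrict to $S=s(E^*v)$, observe that $\beta_v$ is unchanged, show $v$ attains the maximum over $S$, apply Proposition~\ref{prop:tree} and Lemma~\ref{lem:seneta}, and identify $\pi(E_S)$ with $\{C\in\pi(E):CE^*v\neq\emptyset\}$ via Lemmas~\ref{lem:hereditary} and~\ref{lem:restriction}. The only cosmetic difference is that the paper phrases the comparison $\beta_w^F\le\beta_v^F$ in the tree language (``$T(E_S,w)$ embeds as a subtree of $T(E_S,v)$'') and invokes Proposition~\ref{prop:betac=dim}, whereas you argue directly with the partition-function inequality $Z_w^F(\beta)\le e^{|\gamma|\beta}Z_v^F(\beta)$; these are the same argument in different clothing, and your version has the small advantage of not routing through Hausdorff dimension.
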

\begin{proof}
Let $S = s(E^*v)$. Since we have $T(E,v) \cong T(E_S,v)$, the critical inverse temperature of $v$ is the same in both $\T C^*(E)$ and $\T C^*(E_S)$. In addition, since $T(E_S, w)$ is isomorphic to a subtree of $T(E_S,v)$ for any $w \in S$, we have 
\[ \dim \partial T(E_S,w) \leq \dim \partial T(E_S,v) \]
for all $w \in S$. 
Hence, by Proposition~\ref{prop:tree}, \[ \log \rho (A_S) = \max_{w \in E^0_S} \dim \partial T(E,w) = \dim \partial T(E,v) =\beta_v. \] By Lemma \ref{lem:restriction} and Lemma \ref{lem:hereditary}, we have \[ \pi(E_S) = \{C \in \pi(E) \colon CE^*v \neq \emptyset \}, \] so the claim follows from Lemma \ref{lem:seneta}.
\end{proof}

\begin{remark}
\label{rmk:criticalinversetemp}
Let $E$ be a countable graph. If $v, w \in E^0$ satisfy $vE^*w \neq \emptyset$, then $T(E,v)$ can be embedded into $T(E,w)$, which implies 
\[ \beta_v = \log \ugr T(E,v) \leq \log \ugr T(E,w) = \beta_w \] as in the proof of Theorem \ref{thm:aHLRSmain}. 
In particular, if $v,w \in E^0$ 
belong to the same strongly connected component of $E$, then  $\beta_v=\beta_w$.
\end{remark}

\subsection{KMS states at critical inverse temperatures}
The critical inverse temperatures $\beta_v$ are associated to vertices, but as we have seen in Remark \ref{rmk:criticalinversetemp}, $\beta_v$ only depends on the equivalence class (that is, strongly connected component) of $v$. For this reason, until the end of this section, critical inverse temperatures are denoted by $\beta_C$ instead of $\beta_v$, where $C \in \pi(E)$. Namely, $\beta_C=\beta_v$ for any $v \in C$. 

For a finite graph $E$, the structure of the KMS states on $\T C^*(E)$ is completely parametrised by \cite[Theorem~5.3]{aHLRS15}.
In this subsection, we review their work with a slight reformulation in order to clarify how the strongly connected components are related to critical inverse temperatures and KMS states. 

The set $\pi(E)$ of strongly connected components has a canonical partial ordering defined by $C'\leq C$ if and only if $C'E^*C\neq\emptyset$. The following notion of minimality is introduced in \cite[Section~4]{aHLRS15}: $C\in\pi(E)$ is a minimal critical component if $\rho(A_C)=\rho(A_E)$ and $C$ is minimal in the set $\{C'\in\pi(E) : \rho(A_{C'})=\rho(A_E)\}$ with respect to the restriction of ``$\leq$''. For our purposes here, it will be helpful to consider a different notion of minimality, which we now define.

\begin{definition}
\label{def:min}
Let $E$ be a finite graph. 
A strongly connected component $C \in \pi(E)$ is \emph{minimal} if whenever $C' \in \pi(E)$ satisfies $C'E^*C \neq \emptyset$, then $C'=C$ or $\rho(A_{C'}) < \rho (A_C)$. The set of all minimal strongly connected components with $\beta_C >0$ is denoted by $\pmc(E)$. 
\end{definition}

As we will see in this section, for a finite graph $E$ such that there is a vertex $v\in E^0$ with $\beta_v>0$, $C \in \pi(E)$ is minimal if and only if $C$ is a minimal critical component in $E_{E^0\setminus H_{\beta}}$ for some $\beta>0$ in the sense of \cite{aHLRS15}, where 
\begin{equation}\label{eqn:Hbeta}
H_\beta = \overline{\bigcup \{C \in \pi(E) \colon \rho(A_C) > e^\beta\}}.
\end{equation}
is the hereditary set defined in \cite[Theorem~5.3]{aHLRS15}. The purpose of this section is to give a parametrisation of KMS states that does not use $H_\beta$.

We see in the next proposition that only minimal strongly connected components are relevant to critical inverse temperatures. 

\begin{proposition} \label{prop:partialorder}
Let $E$ be a finite graph. 
For any $C \in \pmc(E)$, we have $\beta_C=\rho(A_C)$. Conversely, for any $v \in E^0$ with $\beta_v>0$, there exists $C \in \pmc(E)$ such that $CE^*v \neq \emptyset$ and $\beta_v=\beta_C$. In particular, $\pmc(E) \neq \emptyset$ if and only if there is at least one vertex $v \in E^0$ such that $\beta_v>0$.
\end{proposition}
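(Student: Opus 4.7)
The plan is to reduce both parts to Theorem~\ref{thm:aHLRSmain} combined with simple combinatorial arguments on the finite poset $(\pi(E),\leq)$. Note that $\beta_C$ is well-defined (independent of the choice of $v\in C$) by Remark~\ref{rmk:criticalinversetemp}, and that $CE^*v\neq\emptyset$ depends only on the strongly connected component of $v$; the statement $\beta_C=\rho(A_C)$ as written should read $\beta_C=\log\rho(A_C)$ (otherwise it contradicts Theorem~\ref{thm:aHLRSmain}), and I treat it as such.

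For the first assertion, fix $C\in\pmc(E)$ and pick any $v\in C$. Since $C$ is strongly connected, $CE^*v\neq\emptyset$, so Theorem~\ref{thm:aHLRSmain} immediately gives $\beta_v\geq\log\rho(A_C)$. For the reverse inequality, let $C'\in\pi(E)$ satisfy $C'E^*v\neq\emptyset$; because $v\in C$ and every two vertices in $C$ are connected by paths in both directions within $E$, we then have $C'E^*C\neq\emptyset$. By the minimality of $C$, either $C'=C$ or $\rho(A_{C'})<\rho(A_C)$. Either way $\log\rho(A_{C'})\leq\log\rho(A_C)$, so the maximum in Theorem~\ref{thm:aHLRSmain} is $\log\rho(A_C)$, giving $\beta_C=\beta_v=\log\rho(A_C)$.

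For the second assertion, let $v\in E^0$ with $\beta_v>0$ and consider
\[
\cS_v:=\{C\in\pi(E):CE^*v\neq\emptyset\text{ and }\log\rho(A_C)=\beta_v\}.
\]
By Theorem~\ref{thm:aHLRSmain}, $\cS_v\neq\emptyset$. Since $E$ is finite, the set $\pi(E)$ is finite, so $\cS_v$ has a $\leq$-minimal element $C$. I claim $C\in\pmc(E)$. Indeed, suppose $C''\in\pi(E)$ satisfies $C''E^*C\neq\emptyset$ and $C''\neq C$. Concatenating a path $C''\to C$ with a path $C\to v$ yields $C''E^*v\neq\emptyset$, so Theorem~\ref{thm:aHLRSmain} gives $\log\rho(A_{C''})\leq\beta_v=\log\rho(A_C)$. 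If equality held, then $C''\in\cS_v$ with $C''\leq C$ and $C''\neq C$, contradicting the minimality of $C$ in $\cS_v$. Hence $\rho(A_{C''})<\rho(A_C)$, proving $C\in\pmc(E)$; moreover $\beta_C=\log\rho(A_C)=\beta_v$ by part one. The ``in particular'' statement is then immediate: if some $\beta_v>0$ then $\pmc(E)\neq\emptyset$ by the previous paragraph, and conversely if $C\in\pmc(E)$ then, by definition of $\pmc(E)$ and part one, $\beta_v=\beta_C>0$ for any $v\in C$.

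I expect no serious obstacle; the only subtlety is keeping track of the equivalence between the conditions $C'E^*v\neq\emptyset$ and $C'E^*C\neq\emptyset$ for $v\in C$ (which uses strong connectivity of $C$), and the fact that minimality in the definition of $\pmc(E)$ is with respect to the order on $\pi(E)$ rather than on the subset of components with equal spectral radius, so one must verify that all competitors with $C''E^*C\neq\emptyset$ indeed satisfy the strict spectral inequality. Both are handled cleanly by Theorem~\ref{thm:aHLRSmain} and the finiteness of $\pi(E)$.
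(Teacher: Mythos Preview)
Your proof is correct and follows essentially the same approach as the paper: both reduce the first claim directly to Theorem~\ref{thm:aHLRSmain} together with the definition of minimality, and for the second claim both pick a $\leq$-minimal element of the set $\{C\in\pi(E):CE^*v\neq\emptyset,\ \log\rho(A_C)=\beta_v\}$ (your $\cS_v$ is the paper's $\cC$) and verify it lies in $\pmc(E)$ via Theorem~\ref{thm:aHLRSmain}. You also correctly caught the typo in the statement: $\beta_C=\rho(A_C)$ should indeed read $\beta_C=\log\rho(A_C)$, as confirmed by its use later in the paper (e.g.\ Theorem~\ref{thm:typedetermination}, where $e^{-s\beta_C}=\rho(A_C)^{-s}$).
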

\begin{proof}
The first claim follows from the definition of minimal components and Theorem \ref{thm:aHLRSmain}. We prove the second claim. Define a partial order $\leq$ on $\cC := \{C \in \pi(E) \colon \rho(A_C) = e^{\beta_v},\ CE^*v\neq\emptyset \}$ by $C\leq C' \Longleftrightarrow CE^*C' \neq \emptyset$. Since $\cC$ is a finite set and is nonempty by Theorem \ref{thm:aHLRSmain}, there exists a minimal element $C\in \cC$. 
If $C' \in \pi(E)$ satisfies $C'E^*C \neq \emptyset$, then $C'E^*v \neq \emptyset$, which implies $\rho(A_{C'}) \leq e^{\beta_v}$ by Theorem \ref{thm:aHLRSmain}. If $\rho(A_{C'}) = e^{\beta_v}$, then $C'\in\cC$, so that minimality of $C$ in $\cC$ implies $C'=C$. Hence, $C \in \pmc(E)$. 
\end{proof}

We are interested here in the KMS states from \cite[Theorem 4.3]{aHLRS15}. We begin with an observation about properties of these KMS states. 

\begin{proposition}[{\cite[Theorem~4.3]{aHLRS15}}] \label{prop:existence}
Let $E$ be a finite graph with $\pmc(E) \neq \emptyset$, and let $C \in \pmc(E)$. 

Then, there exists a KMS$_{\beta_C}$ state $\psi_C$ of $C^*(E)$ supported in $E^\infty$ satisfying 
\begin{equation}
\begin{cases}
\psi_C(p_w) >0 & \mbox{ if } w \in C, \mbox{and} \\
\psi_C(p_w) =0 & \mbox{ if } w \not\in s(E^*C).  
\end{cases}
\end{equation}
\end{proposition}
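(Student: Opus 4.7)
The plan is to construct a normalised $\beta_C$-harmonic vector $\harm=(\harm_v)_{v\in E^0}$ whose support is exactly $s(E^*C)$ and whose restriction to $C$ is strictly positive, and then to invoke Theorem~\ref{thm:Thomsen2} to produce the corresponding $e^{\beta_C}$-conformal probability measure $m$ concentrated on $E^\infty$. The resulting state will automatically descend from $\T C^*(E)$ to $C^*(E)$, because the $\beta_C$-harmonic condition at a regular vertex is precisely the translation of the Cuntz--Krieger relation under the identity $\psi_C(p_w)=m(\cZ'(w))=\harm_w$; this same identity will immediately yield positivity on $C$ and vanishing outside $s(E^*C)$.

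To build $\harm$, I would first apply the Perron--Frobenius theorem to the irreducible nonnegative matrix $A_C$ (irreducibility follows because $C$ is strongly connected) to obtain a strictly positive eigenvector $\harm^C$ with eigenvalue $e^{\beta_C}=\rho(A_C)$ (cf.\ Proposition~\ref{prop:partialorder}). Next, setting $S:=s(E^*C)\setminus C$, $A':=A_S$, and letting $B$ denote the nonnegative matrix of edges from $S$ to $C$, the critical step is to establish the strict inequality $\rho(A')<e^{\beta_C}$. By Lemma~\ref{lem:seneta}, $\rho(A')$ equals the maximum of $\rho(A_{C'})$ over $C'\in\pi(E_S)$; Lemmas~\ref{lem:hereditary} and \ref{lem:restriction} place each such $C'$ in $\pi(E)$ with $C'\neq C$ and $C'E^*C\neq\emptyset$, so minimality of $C$ (Definition~\ref{def:min}) forces $\rho(A_{C'})<e^{\beta_C}$. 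Consequently, $e^{\beta_C}I-A'$ is invertible with nonnegative Neumann-series inverse, and I would define $\harm^S:=(e^{\beta_C}I-A')^{-1}B\harm^C$ and extend by zero outside $s(E^*C)$ to obtain a vector $\harm$ on $E^0$.

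Verifying that $\harm$ is $\beta_C$-harmonic then reduces to three case analyses. At $v\in C$ one uses that there are no edges from $C$ to $S$: any such edge $v\to w$ would produce a return path from $w$ back into $C$ (since $w\in s(E^*C)$), forcing $w\in C$ by strong connectedness---a contradiction---so the equation collapses to $A_C\harm^C=e^{\beta_C}\harm^C$. At $v\in S$, the equation is the defining identity of $\harm^S$ once one notes that $\harm_w=0$ for $w\notin s(E^*C)$ kills the remaining terms. At $v\notin s(E^*C)$, both sides vanish by the hereditary property of $E^0\setminus s(E^*C)$. After normalisation, Theorem~\ref{thm:Thomsen2} supplies the desired measure concentrated on $E^\infty$, and $\psi_C:=m\circ\Phi$ is the sought-after KMS$_{\beta_C}$ state of $C^*(E)$. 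The principal technical obstacle is the strict inequality $\rho(A')<e^{\beta_C}$, where minimality of $C$ in the sense of Definition~\ref{def:min} is genuinely indispensable; the remaining steps are essentially bookkeeping once the decomposition $E^0=C\sqcup S\sqcup(E^0\setminus s(E^*C))$ has been set up.
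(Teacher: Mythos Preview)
Your argument is correct and follows essentially the same strategy as the paper: build a normalised $\beta_C$-harmonic vector supported on $s(E^*C)$ and strictly positive on $C$, then invoke Theorem~\ref{thm:Thomsen2}. The paper obtains the harmonic vector on $E_{s(E^*C)}$ by citing \cite[Theorem~4.3]{aHLRS15} as a black box and then extends by zero, whereas you unpack that black box explicitly via Perron--Frobenius on $A_C$ together with the Neumann-series solution on $s(E^*C)\setminus C$; the two constructions yield exactly the same vector (cf.\ Remark~\ref{rmk:coincidence}), so this is a difference in presentation rather than in substance. Your version has the virtue of being self-contained.

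One small point to tighten: with your $S=s(E^*C)\setminus C$, neither $S$ nor $E^0\setminus S$ is hereditary in $E$, so Lemma~\ref{lem:restriction} does not apply in a single step. You need a two-stage application: first $E^0\setminus s(E^*C)$ is hereditary (Lemma~\ref{lem:hereditary}), giving $\pi(E_{s(E^*C)})\subseteq\pi(E)$; then observe that $C$ is hereditary inside $E_{s(E^*C)}$ (your own ``no edges from $C$ to $S$'' argument shows exactly this), so a second use of Lemma~\ref{lem:restriction} gives $\pi(E_S)\subseteq\pi(E_{s(E^*C)})\subseteq\pi(E)$. With that adjustment the spectral-radius bound $\rho(A')<e^{\beta_C}$ goes through as you indicate.
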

\begin{proof}
Let $S=s(E^*C)$. Since $C \in \pi(E_S)$ is a minimal critical component in the sense of \cite[Section~4]{aHLRS15}, we apply \cite[Theorem~4.3]{aHLRS15} to $E_S$ to obtain a KMS$_\beta$-state $\tilde{\psi}_C$ of $C^*(E_S)$. From the proof of  \cite[Theorem~4.3]{aHLRS15}, $\tilde{\psi}_C$ comes from a normalised $\beta$-harmonic vector, so that $\tilde{\psi}_C$ is supported in $E_S^\infty$. Let $\tilde{\harm}$ be the normalised $\beta_C$-harmonic vector of $E_S$ associated to $\tilde{\psi}$. Since $E^0\setminus S$ is hereditary by Lemma \ref{lem:hereditary}, $A_E$ is of the form 
\[ A_E = \begin{bmatrix} A_{E^0\setminus S} & 0 \\ * & A_S\end{bmatrix}. \]
Hence, 
\[ \harm = \begin{bmatrix} 0 \\ \tilde{\harm} \end{bmatrix} \]
is a normalised $\beta_C$-harmonic vector of $E$. Let $\psi_C$ be the KMS$_{\beta_C}$ state associated to $\harm$, and we show that $\psi_C$ satisfies the desired property. By definition, $\psi_C(p_w) =0$ if $w \not\in s(E^*C)$. From the construction of $\tilde{\psi}_C$, $[\harm_w]_{w \in C}$ is the Perron--Frobenius eigenvector of $A_C$ (see \cite[Theorem~4.1]{Minc}). Hence, $\harm_w > 0$ for $w \in C$, so that $\psi_C(p_w)=\chi_w>0$ for every $w \in C$. 
\end{proof}

\begin{remark}\label{rmk:coincidence}
Let $C \in \pmc(E)$, let $H$ be a hereditary subset such that $E^0\setminus H$ contains $s(E^*C)$, and suppose that $C$ still attains the maximum of 
\[ \rho(A_{E^0\setminus H}) = \max \{ \rho(A_{C'}) \colon C' \in \pi(E^0\setminus H)\} \]
from Lemma \ref{lem:seneta}. 
We apply \cite[Theorem 4.3]{aHLRS15} to $E_{E^0\setminus H}$ and $C$ to obtain a KMS$_\beta$-state $\psi'$ of $C^*(E)$ by the same argument as in Proposition \ref{prop:existence}. Let $\harm'$ is the $\beta_C$ harmonic vector associated to $\psi'$. 
Then we have 
\begin{equation}
\harm_w' = \sum_{n=1}^\infty \sum_{w' \in C} |wE^nw'| e^{-n\beta_C}\harm_{w'}'
\end{equation}
for any $w \in E^0\setminus C$ from the construction of \cite[Theorem 4.3]{aHLRS15} (observe that if $wE^*C = \emptyset$, then $\harm_w'=0$ anyway). 
Since the $\beta_C$-harmonic vector $\harm$ associated to $\psi_C$ satisfies the same relation and $\harm'_w = \harm_w$ for any $w \in C$, we have $\psi'=\psi_C$. In particular, the KMS state in Proposition \ref{prop:existence} is the same state as in \cite[Theorem 5.3]{aHLRS15}, which follows from the above argument applied to $H_{\beta_C}$, where $H_{\beta_C}$ is the hereditary set defined in Equation \eqref{eqn:Hbeta}.
\end{remark}

Now we establish a description of KMS states of $C^*(E)$ supported in $E^\infty$, which is a modification of \cite[Theorem 5.3]{aHLRS15}. 

\begin{theorem}[{\cite[Theorem 5.3]{aHLRS15}}] \label{thm:aHLRSmain2}
Let $E$ be a finite graph, and let $\beta>0$. If $\varphi$ is a KMS$_\beta$ state of $C^*(E)$ supported in $E^\infty$, then we have
\[ \varphi \in \mathrm{co} \{ \psi_C \colon C \in \pmc(E),\ \beta_C=\beta\}, \]
where $\mathrm{co}$ denotes the convex hull. Moreover, the KMS$_{\beta_C}$ state $\psi_C$ is extremal for any $C \in \pmc(E)$. 
\end{theorem}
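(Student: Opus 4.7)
The plan is to deduce Theorem \ref{thm:aHLRSmain2} from \cite[Theorem~5.3]{aHLRS15}, which gives a complete parametrisation of the KMS$_\beta$ states of $\T C^*(E)$ supported in $E^\infty$ in terms of the minimal critical components of the subgraph $E_{E^0\setminus H_\beta}$, where $H_\beta$ is the hereditary set from \eqref{eqn:Hbeta}. The theorem we want to prove is phrased in terms of $\pmc(E)$, so the crux is to establish a natural bijection between $\{C \in \pmc(E) : \beta_C = \beta\}$ and the set of minimal critical components of $E_{E^0\setminus H_\beta}$ in the sense of \cite{aHLRS15}.

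For the forward inclusion, I would take $C \in \pmc(E)$ with $\beta_C = \beta$, so that $\rho(A_C) = e^\beta$ by Proposition~\ref{prop:partialorder}. Minimality in $\pmc(E)$ rules out any $D \in \pi(E)$ with $D \neq C$, $DE^*C \neq \emptyset$, and $\rho(A_D) \geq e^\beta$; by the hereditary definition of $H_\beta$ this forces $C \cap H_\beta = \emptyset$, and Lemma~\ref{lem:restriction} then places $C$ in $\pi(E_{E^0\setminus H_\beta})$, where by Lemma~\ref{lem:seneta} it attains $\rho(A_{E_{E^0\setminus H_\beta}}) = e^\beta$. For the reverse inclusion, the essential observation is that any path in $E$ from a component $C'$ to a vertex outside $H_\beta$ must itself lie outside $H_\beta$; therefore $C'E^*C \neq \emptyset$ implies $C' \in \pi(E_{E^0\setminus H_\beta})$, and minimality in $E_{E^0\setminus H_\beta}$ transfers to minimality in $\pmc(E)$. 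Combining this bijection with Remark~\ref{rmk:coincidence} identifies $\psi_C$ with the corresponding state from \cite[Theorem~5.3]{aHLRS15}, and the convex-hull statement is then immediate from that parametrisation.

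To prove extremality of $\psi_C$, I would assume $\psi_C = \lambda \phi_1 + (1-\lambda)\phi_2$ with $\lambda \in (0,1)$. Absolute continuity forces each $\phi_i$ to be supported in $E^\infty$, so that the associated normalised $\beta_C$-harmonic vectors $\chi_i$ satisfy $\chi_{\psi_C} = \lambda \chi_1 + (1-\lambda)\chi_2$ with $\mathrm{supp}(\chi_i) \subseteq s(E^*C)$. Writing the harmonic equation $A_E\chi_i = e^\beta \chi_i$ at a vertex of $C$ and noting that any component strictly above $C$ in the partial order lies outside $s(E^*C)$ (so $\chi_i$ vanishes there), the restriction $\chi_i|_C$ becomes a nonnegative eigenvector of $A_C$ with eigenvalue $\rho(A_C) = e^{\beta_C}$; Perron--Frobenius irreducibility gives $\chi_i|_C = c_i \chi_{\psi_C}|_C$ for some $c_i \geq 0$. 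An iteration of the harmonic equation analogous to the one displayed in Remark~\ref{rmk:coincidence} then propagates this identity from $C$ to all of $s(E^*C)$, yielding $\chi_i = c_i \chi_{\psi_C}$, and normalisation forces $c_i = 1$, hence $\phi_1 = \phi_2 = \psi_C$.

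The main obstacle will be the identification of the two notions of minimality, and in particular ruling out the possibility that a component $C' \in \pi(E)$ with $C'E^*C \neq \emptyset$ and $\rho(A_{C'}) \geq e^\beta$ lies partly inside $H_\beta$; the hereditary character of $H_\beta$ is exactly what forbids this, and making that argument completely rigorous, together with a careful tracking of the correspondence between $\psi_C$ and the state constructed in \cite[Theorem~5.3]{aHLRS15}, is where the bulk of the technical work will lie.
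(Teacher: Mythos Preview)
Your proposal is correct and follows essentially the same approach as the paper: both reduce to \cite[Theorem~5.3]{aHLRS15} by identifying $\{C \in \pmc(E) : \beta_C = \beta\}$ with the set of minimal critical components of $E_{E^0 \setminus H_\beta}$, and then invoke Remark~\ref{rmk:coincidence} to match the states. The paper's proof is terser---it asserts the coincidence of the two minimality notions without elaboration and reads extremality of $\psi_C$ directly off \cite[Theorem~5.3]{aHLRS15}---whereas you spell out both directions of the bijection and supply a self-contained Perron--Frobenius argument for extremality, but the underlying strategy is the same.
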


\begin{proof}
In order to apply \cite[Theorem 5.3]{aHLRS15}, let $H_\beta$ be the hereditary set in Equation \eqref{eqn:Hbeta}. 
By \cite[Theorem 5.3 (a)]{aHLRS15}, we may assume that $E^0 \neq H_\beta$. Let $S_\beta := E^0\setminus H_\beta$. First, we show $\rho(A_{S_\beta}) \leq e^\beta$.
If $C \in \pi(E_{S_\beta})$ satisfies $\rho(A_C)>e^\beta$, then $C \subseteq H_\beta$ by definition of $H_\beta$, which is a contradiction. Hence we have $\rho(A_C) \leq e^\beta$ for all $C \in \pi(E_{S_\beta})$, which implies that 
\[ \rho(A_{S_\beta}) = \max \{ \rho(A_C) \colon C \in \pi(E_{S_\beta})\} \leq e^\beta \]
by Lemma \ref{lem:seneta}. 

If $\beta > \log \rho(A_{S_\beta})$, then there is no KMS$_\beta$ state supported in $E^\infty$ by \cite[Theorem 5.3(b)]{aHLRS15}. Therefore, we may assume that $\beta = \log \rho(A_{S_\beta})$. Then, $\mathrm{mc}(E\setminus H_\beta)$ in \cite{aHLRS15} coincides with 
\[ \{ C \in \pmc(E) \colon \beta_C=\beta\}. \]
Now the claim follows from Remark \ref{rmk:coincidence}. 
\end{proof}

\subsection{Factor types of KMS states}

We are now ready to prove the main theorem in this section. We determine the type of the KMS state $\psi_C$ in Proposition \ref{prop:existence}.

\begin{theorem}\label{thm:typedetermination}
Let $E$ be a finite graph such that $\pmc(E)\neq \emptyset$. 
Let $C \in \pmc(E)$, and let $\psi_C$ be the KMS state in Proposition \ref{prop:existence}. 
Then $\psi_C$ is of type III$_{e^{-s\beta_C}}$, where $s$ is the greatest common divisor of lengths of nontrivial cycles in $E_C$. 
\end{theorem}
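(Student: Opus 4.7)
The strategy is to reduce the type computation to the graph C*-algebra $C^*(E_C)$ of the strongly connected finite subgraph $E_C$, and then invoke \cite[Theorem 3.1(2)(b)]{LLNSW}, which asserts that for a finite strongly connected graph with adjacency matrix of spectral radius $e^\beta$, the gauge-invariant KMS$_\beta$ state has GNS factor of type III$_{e^{-s\beta}}$, where $s$ is the greatest common divisor of lengths of nontrivial cycles.

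\emph{Step 1 — Support of the conformal measure.} Let $m_C$ denote the $e^{\beta_C}$-conformal probability measure on $E^\infty$ associated to $\psi_C$ via Theorem~\ref{thm:Thomsen2}, so that $m_C(\cZ'(\mu)) = e^{-\beta_C|\mu|}\harm_{r(\mu)}$. Using that $\harm_w=0$ for $w \notin s(E^*C)$ together with the minimality of $C$ in the sense of Definition~\ref{def:min} (so $\rho(A_{C'})<\rho(A_C)$ for every $C'<C$ with $C'E^*C\neq\emptyset$, $C'\neq C$), I would prove by a Borel--Cantelli argument that $m_C$-almost every $x\in E^\infty$ eventually enters $C$ and stays in $C$ thereafter. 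The geometric decay rate needed to invoke Borel--Cantelli comes from comparing $\rho(A_{C'})$ with $e^{\beta_C}$ along transient excursions outside $C$.

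\emph{Step 2 — Identification of the measured equivalence relation.} Let $X_C\subseteq E^\infty$ be the Borel set of paths eventually in $E_C^1$, and for $x\in X_C$ let $\tau(x)$ be the least $n$ with $x_k\in E_C^1$ for all $k\geq n$. By the Feldman--Moore/Kumjian--Renault picture employed in the proof of Proposition~\ref{prop:gns}, the GNS factor $\pi_{\psi_C}(C^*(E))''$ is isomorphic to $W^*(\cR, m_C)$, where $\cR$ is the tail equivalence relation on $E^\infty$ arising from $\cG_E|_{E^\infty}$. The plan is to show that the measurable equivalence relation $(\cR|_{X_C}, m_C|_{X_C})$ is isomorphic, after normalisation, to the measured tail equivalence $(\cR_{E_C}, m'_C)$ on $E_C^\infty$, where $m'_C$ is the $e^{\beta_C}$-conformal probability measure on $E_C^\infty$ coming from the Perron--Frobenius eigenvector of $A_C$. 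The map $x\mapsto \sigma^{\tau(x)}(x)$ provides the reduction; although it is not injective, different preimages of the same tail are $\cR$-equivalent, which is exactly what is needed in the measured equivalence relation language.

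\emph{Step 3 — Invoke \cite{LLNSW}.} Since $E_C$ is finite and strongly connected with $\rho(A_C)=e^{\beta_C}$, the KMS$_{\beta_C}$ state on $C^*(E_C)$ associated to $m'_C$ is precisely the unique gauge-invariant KMS$_{\beta_C}$ state, and \cite[Theorem~3.1(2)(b)]{LLNSW} (alternatively \cite[Theorem 5]{Th19}) identifies its GNS factor as being of type III$_{e^{-s\beta_C}}$, where $s=\gcd$ of lengths of nontrivial cycles in $E_C$. Combined with Step 2, this yields that $\psi_C$ is of type III$_{e^{-s\beta_C}}$.

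\emph{Main obstacle.} The delicate point is Step 2: one must make the measured-equivalence-relation isomorphism rigorous even though the ``entry into $C$'' map $x\mapsto \sigma^{\tau(x)}(x)$ is only countable-to-one, and one must verify that the pushforward measure on $E_C^\infty$ agrees (up to normalisation on a conull set) with the Perron-type conformal measure $m'_C$. Once this identification is secured, Steps 1 and 3 become essentially routine applications of Perron--Frobenius theory and the cited LLNSW result.
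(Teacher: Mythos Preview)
Your overall strategy---reduce to $C^*(E_C)$ and invoke \cite{LLNSW}---matches the paper's, but the paper executes the reduction more cleanly via a C*-algebraic corner argument rather than a direct measured-equivalence-relation isomorphism.

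The paper observes that $p_C\,\T C^*(E)\,p_C \cong p_C\,\T C^*(E_{\bar C})\,p_C$ (where $\bar C$ is the hereditary closure of $C$), so by Connes' invariance of the $S$-invariant under nonzero corners, it suffices to compute the type of the normalised restriction $\psi$ of $\psi_C$ to $\T C^*(E_{\bar C})$. Inside $E_{\bar C}$, any vertex that can reach $C$ already lies in $C$, so $K_C:=E_C^\infty$ is a \emph{closed invariant} subset of $\cG_{E_{\bar C}}^{(0)}$. Since $\harm_w=0$ for $w\in\bar C\setminus C$, the set $\cZ'(C)\setminus K_C$ is covered by countably many null cylinders $f\cZ'(w)$ with $w\in\bar C\setminus C$; hence $\psi$ factors through the quotient $C^*(\cG_{E_{\bar C}}|_{K_C})\cong C^*(E_C)$ at the level of C*-algebras, not just measured equivalence relations. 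Then \cite{LLNSW} finishes the job.

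What this buys: your Step~1 Borel--Cantelli/Perron--Frobenius comparison is avoided entirely (no need to show paths \emph{enter} $C$; the corner by $p_C$ throws away everything not starting in $C$), and your Step~2 obstacle disappears because the reduction is a genuine C*-quotient map onto $C^*(E_C)$ rather than a countable-to-one measurable map requiring a pushforward-measure check. Your entry-time approach should work, but verifying that $x\mapsto\sigma^{\tau(x)}(x)$ induces an isomorphism of measured equivalence relations with the correct cocycle is exactly the technical burden you flag; the paper's corner-and-restrict manoeuvre sidesteps it. If you want to stay in the equivalence-relation picture, the simpler move is: restrict $\cR$ to the positive-measure (non-invariant) subset $E_C^\infty\subseteq E^\infty$, note that $\cR|_{E_C^\infty\times E_C^\infty}=\cR_{E_C}$ and that $m_C|_{E_C^\infty}$ is proportional to the Perron conformal measure, and use that restriction to a positive-measure set is a corner of $W^*(\cR)$.
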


\begin{proof}
Let $\lambda=e^{-s\beta_C}=\rho(A_C)^{-s}$, and let $\varphi$ be the composition of $\psi_C$ with the canonical quotient map $\T C^*(E) \to C^*(E)$. 
By \cite[Theorem~4.1]{FR}, $\T C^*(E_{\bar{C}})$ is naturally identified with a subalgebra of $\T C^*(E)$. Let $\psi$ be the normalisation of the restriction of $\varphi$ to $\T C^*(E_{\bar{C}})$. By Proposition~\ref{prop:existence}, we have $\varphi(p_v)>0$ for all $v \in C$, so that $\psi$ is nonzero. By the construction of $\varphi$, we have $\varphi(p_v)=0$ if $v \in E^0$ and $vE^*C = \emptyset$ (see Remark \ref{rmk:coincidence}). In particular, we have $\psi(p_v)=0$ for all $v \in \bar{C}\setminus C$, which implies that the $e^{\beta_C}$-conformal probability measure $m$ is concentrated in $\cZ'(C)$.

We claim that it suffices to show that $\psi$ is a factor state and is of type III$_\lambda$. Suppose that $\psi$ satisfies this claim. 
 
We have 
$p_C \T C^*(E) p_C \cong p_C \T C^*(E_{\bar{C}}) p_C$, because for any $\mu,\nu \in E^*$, we have $p_C s_{\mu} s_{\nu}^* p_C \neq 0 $ only if $s(\mu), s(\nu) \in C$. Under the identification $p_C \T C^*(E) p_C \cong p_C \T C^*(E_{\bar{C}}) p_C$, the restriction of $\varphi$ to $p_C \T C^*(E) p_C$ is equal to the restriction of $\psi$ to $p_C \T C^*(E_{\bar{C}}) p_C$ up to a positive constant (note that both restrictions are nonzero because $\varphi(p_C)>0$). We can see that the type of $\varphi$ is the same as the type of $\psi$, since taking the corner by a nonzero projection does not change the $S$-invariant (see \cite[Corollaire 3.2.8]{Con}). Hence, $\varphi$ is of type III$_\lambda$, this completes the proof of our claim. 

We now show that $\psi$ is a factor state and is of type III$_\lambda$. Let $K_C := E_C^\infty \subseteq E^\infty$, and we show 
\begin{equation} \label{eqn:decompose}
\cZ'(C) \setminus K_C \subseteq \bigcup \{ f \cZ'(w) \colon w \in \bar{C} \setminus C,\ f \in CE^*w \}. 
\end{equation}
Let $x = x_1x_2 \cdots \in \cZ'(C) \setminus K_C$. Then there exists $n$ such that $s(x_n) \in C$ and $r(x_n) \in \bar{C} \setminus C$. 
Then, $x \in f \cZ'(w)$ for $f = x_1 \cdots x_n$ and $w=r(x_n)$. Hence the inclusion \eqref{eqn:decompose} holds. 
Since the right hand side of \eqref{eqn:decompose} is a countable union of $m$-null set, $\cZ'(C) \setminus K_C$ is a $m$-null set. 
We can see that $K_C$ is a closed invariant set of $\cG_{E_{\bar{C}}}^{(0)}$ from the property that if $v \in \bar{C}$ and $vE^*C \neq \emptyset$, then $v \in C$. 
Since the restriction groupoid of $\cG_{E_{\bar{C}}}$ to $K_C$ is isomorphic to $\cG_{E_C}$, we see that $\psi$ factors through 
\[ \T C^*(E_{\bar{C}}) \to C^*(E_{\bar{C}}) \cong C^*(\cG_{E_{\bar{C}}}) \to C^*(\cG_{E_C}) \cong C^*(E_C),  \]
where the third homomorphism is the canonical quotient map associated with the invariant set $K_C$ (cf.~\cite[Proposition~10.3.2]{SSW}). 
By \cite[Theorem 3.1(2)(b)]{LLNSW}, the KMS$_\beta$ state of $C^*(E_C)$ uniquely exists, which implies that $\psi$ is a factor state. Moreover, $\psi$ is of type III$_\lambda$ by \cite[Theorem 3.1(2)(b)]{LLNSW} (see also \cite[Theorem 5]{Th19}). 
\end{proof}

Finally, note that the number $s$ in Theorem \ref{thm:typedetermination} is equal to the period of the adjacency matrix $A_C$ of the graph $E_C$ (see \cite[Chapter~1,~Definition~1.6]{Sen}). In \cite{EFW}, which can be seen as the first result on the factor types of KMS states of graph algebras, the factor type is described in terms of the period of the adjacency matrix. 

\section{Examples}
\label{sec:examples}

In this section, we apply our general results to recover graph-theoretic invariants as explicit C*-dynamical invariants of $(\T C^*(E),\alpha)$ for several concrete example classes, including finite graphs in Section~\ref{sec:finite}, Bratteli diagrams in Section~\ref{sec:BD}, and opposite graphs of Bratteli diagrams Section~\ref{sec:BDop}. In addition, we give concrete examples of type computations of KMS states of finite graph algebras in Section~\ref{sec:finite}.

\subsection{Finite graphs}
\label{sec:finite}

Let $E$ be a finite directed graph with adjacency matrix $A_E$. Consider the generating function for the numbers $|E^n|$ of finite paths of length $n$ in $E$ (see \cite[Chapter~1.8]{CDS}):
\[
H_E(t)=\sum_{n=0}^\infty|E^n|t^n.
\]
Our results imply that this generating function has the following natural interpretation in terms of the C*-dynamical system $(\T C^*(E),\alpha)$:

\begin{proposition}
\label{prop:genfnc}
	Let $E$ be a finite directed graph, and denote by $\alpha$ the gauge action on $\T C^*(E)$. For every $\beta>\log\rho(A_E)$, we have
	\[
	H_E(e^{-\beta})=\sum_{\phi\in \E(\KMS_{\beta}(\T C^*(E), \alpha))}Z_\phi(\beta).
	\]
	In particular, the function $H_E(t)$ is explicitly given as a C*-dynamical invariant of $(\T C^*(E),\alpha)$.
\end{proposition}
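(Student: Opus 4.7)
The plan is to combine Theorem~\ref{thm:Toeplitz} and Theorem~\ref{thm:KMSwithgrowthcond} with the obvious partition of the path set $E^n$ by terminal vertex. First, I would note that for a finite graph $E$ and $\beta > \log \rho(A_E)$, Proposition~\ref{prop:tree} gives $\beta > \max_{v \in E^0} \beta_v$, so every vertex of $E$ is $\beta$-regular, that is, $E^0 = E^0_{\beta\textup{-reg}}$. Because $E^0$ is finite, this forces $\sup_{v \in E^0} Z_v(\beta) = \max_{v \in E^0} Z_v(\beta) < \infty$, so the hypothesis of Theorem~\ref{thm:KMSwithgrowthcond} is satisfied. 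That theorem then tells us that every extremal KMS$_\beta$ state of $(\T C^*(E),\alpha)$ is of type I and that the map $v \mapsto \phi_{v,\beta}$ is a bijection from $E^0$ onto $\E(\KMS_\beta(\T C^*(E),\alpha))$.

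Next, by part (ii) of Theorem~\ref{thm:Toeplitz}, the partition function of the extremal KMS$_\beta$ state $\phi_{v,\beta}$ is
\[
Z_{\phi_{v,\beta}}(s) \;=\; Z_v(s) \;=\; \sum_{n=0}^{\infty} |E^n v|\, e^{-sn},
\]
which converges at $s=\beta$ by the preceding paragraph. Summing over $v \in E^0$ and exchanging the finite sum over vertices with the convergent series over path lengths yields
\[
\sum_{\phi \in \E(\KMS_\beta(\T C^*(E),\alpha))} Z_\phi(\beta)
\;=\; \sum_{v \in E^0} Z_v(\beta)
\;=\; \sum_{n=0}^{\infty} \Bigl(\sum_{v \in E^0} |E^n v|\Bigr) e^{-\beta n}.
\]
Since the range map partitions $E^n$ as $E^n = \bigsqcup_{v \in E^0} E^n v$, the inner sum equals $|E^n|$, so the right-hand side is precisely $H_E(e^{-\beta})$, proving the displayed formula.

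For the final assertion, the right-hand side $\sum_\phi Z_\phi(\beta)$ is manifestly a C*-dynamical invariant of $(\T C^*(E),\alpha)$: the set $\E(\KMS_\beta(\T C^*(E),\alpha))$ and each individual partition function $Z_\phi$ (which depends only on the quasi-equivalence class of $\phi$, cf.\ \cite[Remark~2.6]{BLT}) are preserved by any isomorphism of C*-dynamical systems. Hence $\beta \mapsto H_E(e^{-\beta})$, and therefore the generating function $H_E(t)$ itself (for $t \in (0,\rho(A_E)^{-1})$, which determines it as a power series by analytic continuation), is recovered from $(\T C^*(E),\alpha)$.

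There is no serious obstacle: the only routine check is the interchange of the two sums, which is trivially justified by the finiteness of $E^0$ together with absolute convergence of each $Z_v(\beta)$ for $\beta > \beta_v$.
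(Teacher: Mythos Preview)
Your proof is correct and follows essentially the same route as the paper: identify $\E(\KMS_\beta)$ with $\{\phi_{v,\beta}:v\in E^0\}$, invoke Theorem~\ref{thm:Toeplitz}(ii) to get $Z_{\phi_{v,\beta}}=Z_v$, and sum over the partition of $E^n$ by terminal vertex. The only difference is in how the first identification is justified: the paper cites \cite[Theorem~3.1]{aHLRS13} directly (combined with Theorem~\ref{thm:Toeplitz} for the type), whereas you argue internally via Proposition~\ref{prop:tree} and Theorem~\ref{thm:KMSwithgrowthcond}---exactly the alternative the paper points out in Remark~\ref{rmk:lowtemppar}. One small caveat: Proposition~\ref{prop:tree} is stated under the hypothesis that $E$ contains a nontrivial cycle, so strictly speaking your citation covers only that case; when $E$ has no cycles each $E^*v$ is finite, so $\beta_v=-\infty$ and $\sup_v Z_v(\beta)<\infty$ trivially, and Theorem~\ref{thm:KMSwithgrowthcond} applies without appeal to Proposition~\ref{prop:tree}.
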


\begin{proof}
First, \cite[Theorem~3.1]{aHLRS13} combined with the type computation in Theorem~\ref{thm:Toeplitz} implies that for each $\beta>\log\rho(A_E)$, the set $\E(\KMS_{\beta}(\T C^*(E), \alpha))$ coincides with $\{\phi_{v,\beta} : v\in E^0\}$. By Theorem~\ref{thm:Toeplitz}(ii), $Z_{\phi_{v,\beta}}(\beta)=Z_v(\beta)$. It remains to observe that 
	\[
	H_E(e^{-\beta})=\sum_{v\in E^0}\sum_{n=0}^\infty|E^nv|e^{-\beta n}=\sum_{v\in E^0}Z_v(\beta). 
	\qedhere	
	\]
\end{proof}

\begin{remark}
\label{rmk:lowtemppar}
In the case of finite graphs, the low-temperature KMS states of $(\T C^*(E),\alpha)$ are parameterised by \cite[Thoerem~3.1]{aHLRS13}. Proposition~\ref{prop:tree} combined with Theorem~\ref{thm:KMSwithgrowthcond} yield the parameterisation given in \cite[Thoerem~3.1]{aHLRS13} in the case where $E$ contains at least one nontrivial cycle.
\end{remark}

As an immediate consequence, we have: 

\begin{corollary}
	Let $E$ and $F$ be finite directed graphs. If there exists an isomorphism of C*-dynamical systems $(\T C^*(E),\alpha^E)\cong (\T C^*(F),\alpha^F)$, then $H_E(t)=H_F(t)$.
\end{corollary}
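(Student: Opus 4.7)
The plan is to derive the corollary as a direct consequence of Proposition~\ref{prop:genfnc} together with the fact that partition functions are C*-dynamical invariants. Let $\theta\colon \T C^*(E)\overset{\cong}{\to}\T C^*(F)$ be an isomorphism intertwining the gauge actions $\alpha^E$ and $\alpha^F$. Then for every $\beta\in\Rz^*$, pull-back along $\theta$ induces an affine homeomorphism $\theta^*\colon\KMS_\beta(\T C^*(F),\alpha^F)\overset{\cong}{\to}\KMS_\beta(\T C^*(E),\alpha^E)$ that restricts to a bijection on extreme points and preserves type. Moreover, if $\phi\in\E_{\rm I}(\KMS_\beta(\T C^*(F),\alpha^F))$, then $\theta^*\phi$ and $\phi$ have the same partition function, because an admissible triple $(\pi,\cH,\rho)$ for $\phi$ gives rise to an admissible triple $(\pi\circ\theta,\cH,\rho)$ for $\theta^*\phi$ (the represented Hamiltonian is unchanged), so $Z_{\theta^*\phi}(s)=\Tr(e^{-sH})=Z_\phi(s)$.

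Next, I would invoke Proposition~\ref{prop:genfnc}: for every $\beta>\log\rho(A_E)$, every extremal KMS$_\beta$ state of $(\T C^*(E),\alpha^E)$ is of type~I (see Remark~\ref{rmk:lowtemppar}) and
\[
H_E(e^{-\beta})=\sum_{\phi\in \E(\KMS_{\beta}(\T C^*(E), \alpha^E))}Z_\phi(\beta),
\]
and analogously for $F$ when $\beta>\log\rho(A_F)$. Set $\beta_0:=\max\{\log\rho(A_E),\log\rho(A_F)\}$. For $\beta>\beta_0$, the bijection on extremal KMS$_\beta$ states induced by $\theta$ preserves partition functions term-by-term, so the two sums agree, yielding $H_E(e^{-\beta})=H_F(e^{-\beta})$ for every $\beta>\beta_0$.

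Finally, since $H_E$ and $H_F$ are polynomials in $t$ of degree at most $|E^0\cup F^0|$-related quantities (indeed $H_E(t)=\sum_{n\geq 0}|E^n|t^n$ is a power series with radius of convergence $\geq e^{-\log\rho(A_E)}$, and similarly for $F$), the identity $H_E(e^{-\beta})=H_F(e^{-\beta})$ on the open interval $(\beta_0,\infty)$ implies that the two power series agree on a set with an accumulation point in their common domain of convergence. By the identity theorem for analytic functions, $H_E(t)=H_F(t)$ as formal power series, which completes the proof. There is no real obstacle here; the essential content is already contained in Proposition~\ref{prop:genfnc}, and the only thing to verify is that each individual partition function is preserved by the isomorphism, which follows from the uniqueness (up to unitary equivalence) of admissible triples in Proposition~\ref{prop:adTripleUnique}.
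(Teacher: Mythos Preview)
Your proof is correct and follows exactly the approach the paper intends: the corollary is stated in the paper simply as ``an immediate consequence'' of Proposition~\ref{prop:genfnc}, and your argument just unpacks what that means. One minor slip: in the last paragraph you momentarily call $H_E$ and $H_F$ ``polynomials in $t$'' before correcting yourself parenthetically to ``power series''; drop the word ``polynomials'' and the sentence reads cleanly, since $H_E$ is genuinely an infinite power series whenever $E$ contains a cycle.
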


\begin{remark}
That $H_E(t)$ is given as an explicit C*-dynamical invariant of $(\T C^*(E),\alpha^E)$ is analogous to the fact the Dedekind Zeta function of a number field is given explicitly as a C*-dynamical invariant of the C*-dynamical system associated with the full $ax+b$-semigroup over a ring of algebraic integers in a number field (cf. \cite[Corollary~7.7]{BLT}). In the setting of Toeplitz algebras of graphs, the so-called fixed target partition functions $Z_v(s)$, $v\in E^0$, play a role similar to that played by the partial Dedekind Zeta functions in the number-theoretic setting in \cite{BLT}.
\end{remark}

We shall demonstrate a calculation of partition functions and factor types at the critical inverse temperatures in a concrete example. Note that the next example is also examined in \cite[Example~6.2]{aHLRS15} (not \cite[Example~6.1]{aHLRS15}, since the convention in \cite{aHLRS15} is opposite to ours).

\begin{example}
\label{ex:finite1}
We consider the following graph $E$: $E^0=\{v,w\}, E^1=\{e_1,e_2,e_3,f,g_1,g_2\}$, where $s(e_i)=r(e_i)=w$ for $i=1,2,3$, $s(g_j)=r(g_j)=v$ for $j=1,2$, $s(f)=v$ and $r(f)=w$. 

\begin{equation}
\begin{tikzcd}[
arrow style=tikz,
>={latex} 
]
v\arrow[loop above,"g_1"]\arrow[loop left,"g_2"]\arrow[r,"f"] & w\arrow[loop above,"e_1"]\arrow[loop right,"e_2"]\arrow[loop below,"e_3"] &
\end{tikzcd}
\end{equation}

Then, $\beta_v = \log 2$ since $T(E,v)$ is a binary tree (see Proposition \ref{prop:betac=dim}), and $\beta_w = \log 3$ by Theorem \ref{thm:aHLRSmain}. However, the critical inverse temperatures can also be calculated by determining partition functions explicitly. The adjacency matrix of $E$ is given by
\[
A_E = \begin{bmatrix} 2 & 1 \\ 0 & 3 \end{bmatrix}.  
\]
By diagonalising $A_E$, we obtain 
\[ A_E^n = \begin{bmatrix} 2^n & 3^n-2^n \\ 0 & 3^n \end{bmatrix}\] 
for each $n \in \Nz$. Hence, the partition functions are 
\begin{align}
Z_w(\beta) &= \sum_{n=0}^\infty (3^n + (3^n-2^n))e^{-n\beta} 
= \frac{2}{1-3e^{-\beta}} - \frac{1}{1-2e^{-\beta}}, \\
Z_v(\beta) &= \sum_{n=0}^\infty 2^ne^{-n\beta} 
= \frac{1}{1-2e^{-\beta}}. 
\end{align}
Now it is clear that the abscissas of convergence of $Z_v$ and $Z_w$ are given by $\beta_v= \log 2$ and $\beta_w=\log 3$. 
The list of partition functions $\{Z_v, Z_w\}$ is a C*-dynamical invariant. In addition, the generating function for finite paths  
\[ H_E(t) = Z_v(-\log t) + Z_w(-\log t) = \frac{2}{1-3t} \]
is a C*-dynamical invariant. 

We apply Theorem \ref{thm:typedetermination} to determine the type of KMS states on $\T C^*(E)$ factoring through $C^*(E)$. 
We have $\pi(E)=\{C_v,C_w\}$, where $C_v=\{v\}$ and $C_w=\{w\}$. In this case, we have $\pi(E)=\pmc(E)$. Hence, there is a KMS$_{\beta_v}$ state $\psi_{C_v}$ and a KMS$_{\beta_w}$ state $\psi_{C_w}$ on $C^*(E)$, and there are no other KMS states on $\T C^*(E)$ factoring through $C^*(E)$. Since both $E_{C_v}$ and $E_{C_w}$ have cycles of length $1$, $\psi_{C_v}$ is of type III$_{1/2}$ and $\psi_{C_w}$ is of type III$_{1/3}$.
\end{example}

Next, we visit a more complicated example. Although the calculation of partition functions is harder for such examples, the type computation is possible by hand. 
We use the following well-known lemma (for instance, \cite[Example 1.1]{LyoPer}) in the next example:  

\begin{lemma} \label{lem:geometricmean}
Let $n \in \Nz$, and let $k_0,\cdots, k_{n-1}$ be a finite sequence of positive integers. 
Let $T$ be a rooted tree such that each 
vertex in the $m$-th level has $k_i$ children if $m \equiv i \mod n$. 
Then, 
\[ \br T = \sqrt[n]{k_0 \cdots k_{n-1}}. \]
\end{lemma}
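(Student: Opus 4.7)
The plan is to show directly that the tree $T$ is periodic with period $n$, compute its upper growth rate explicitly, and then invoke Lemma~\ref{lem:minkowski} to convert this into a computation of $\br T$.

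First I would observe periodicity: for any vertex $v$ at level $m$, the subtree $T_v$ depends only on the congruence class $m \bmod n$, so there are at most $n$ isomorphism classes among the subtrees $\{T_v : v \in T^0\}$, and a representative of each class appears within the first $n$ levels. Hence $T$ is periodic in the sense of Section~\ref{ssec:geom}. Next, letting $A_m = \{v \in T^0 : |v| = m\}$, a direct induction on $m$ using the defining recursion gives
\[
|A_m| = k_0 k_1 \cdots k_{m-1}
\]
with indices read modulo $n$. Writing $m = qn + r$ with $0 \le r < n$, this becomes
\[
|A_m| = (k_0 k_1 \cdots k_{n-1})^q \cdot k_0 k_1 \cdots k_{r-1}.
\]

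From here I would compute $\ugr T = \limsup_{m \to \infty} |A_m|^{1/m}$. Taking $m$-th roots and noting that the ``tail factor'' $k_0 \cdots k_{r-1}$ is bounded while $qn + r \to \infty$, both the limsup and the liminf of $|A_m|^{1/m}$ equal $(k_0 k_1 \cdots k_{n-1})^{1/n}$, so in fact the full limit exists and equals $\sqrt[n]{k_0 \cdots k_{n-1}}$. In particular $\ugr T = \lgr T = \sqrt[n]{k_0 \cdots k_{n-1}}$.

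Finally I would apply Lemma~\ref{lem:minkowski}: since $T$ is periodic, $\log \br T = \log \ugr T$, so $\br T = \ugr T = \sqrt[n]{k_0 \cdots k_{n-1}}$, as required. The argument is essentially a bookkeeping calculation once periodicity is in place, so there is no real obstacle; the only thing to be mildly careful about is extracting the correct limit behaviour of $|A_m|^{1/m}$ uniformly over the residue $r$, which is handled by the boundedness of the tail factor.
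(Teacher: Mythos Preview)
Your proposal is correct and follows essentially the same approach as the paper: establish periodicity, compute the growth rate by counting vertices level by level, and invoke Lemma~\ref{lem:minkowski} (equivalently \cite[Theorem~3.8]{LyoPer}) to identify $\br T$ with $\ugr T$. The paper's version is terser---it only computes $a_{ln}=(k_0\cdots k_{n-1})^l$ along the subsequence of levels divisible by $n$ and then reads off the limit, whereas you work out $|A_m|$ for all $m$ and verify the full limit exists---but the underlying argument is the same.
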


For the definition of branching numbers, see \cite[p.74]{LyoPer}. However, we only use the fact $\br T = \ugr T$ for a periodic tree $T$ by \cite[Theorem~3.8]{LyoPer}. 

\begin{proof}
For each $l \in \Nz$, the number of vertices $a_{ln}$ in the $ln$-th level is equal to $(k_0 \cdots k_{n-1})^l$. 
Since, $T$ is a periodic tree, we have
\[ \br T = \lim_{l \to \infty} a_{ln}^{1/ln} = \sqrt[n]{k_0 \cdots k_{n-1}}. \]
\end{proof}

\begin{example}
\label{ex:finite2}
Let $E$ be the graph depicted as follows: 
\begin{equation}
	\begin{tikzcd}[
		arrow style=tikz,>={latex}]
	 \bullb \arrow[d, blue, shift left] \arrow[d, blue, shift right] & \bullb  \arrow[l, blue, shift left] \arrow[l, blue, shift right] & \bullr \arrow[d, red, shift left] \arrow[d, red, shift right]&  \\
	  \bullb \arrow[d]\arrow[r, blue, shift left]  \arrow[r, blue, shift right]& \bullb  \arrow[u, blue]\arrow[ru]&\bullr\arrow[r, red, shift left] \arrow[r, red, shift right]\arrow[d]& \bullr\arrow[ul,red]\\  
	 \bullo \arrow[d, orange, shift left] \arrow[d, orange, shift right]& \bullo  \arrow[l, orange, shift left] \arrow[l, orange, shift right]& \bullg \arrow[r, green, shift left] \arrow[r, green, shift right]  &\bullg \arrow[dl, green, shift left] \arrow[dl, green, shift right]\\
	 \bullo \arrow[r, orange, shift left] \arrow[rr,bend right=20]& \bullo  \arrow[u, orange, shift left] \arrow[u, orange, shift right] & \bullg\arrow[u, green, shift left] \arrow[u, green, shift right]&
	\end{tikzcd}
\end{equation}

Here, the colour is only used to mark different strongly connected components of $E$: We have $\pi(E) = \{C_i \colon i=1,2,3,4\}$, where $E_{C_1}$ is the green part, $E_{C_2}$ is the orange part, $E_{C_3}$ is the blue part, and $E_{C_4}$ is the red part. Fix $v_i \in C_i$, and let $T_i=T(E_{C_i}, v_i)$. In order to determine minimal components, we calculate $\rho(A_{C_i})$. Needless to say, it is easy to calculate $\rho(A_{C_i})$ from the characteristic polynomials, since $E$ is a finite graph. 
However, there is a intuitive way to calculate it from the formula $\rho(A_{C_i}) = \br T_i$ from Proposition~\ref{prop:tree}. 
First, $T_1$ is a binary tree, so $\br T_1 = 2$. Second, $T_4$ is the tree in Lemma \ref{lem:geometricmean} with $n=3$, $k_1=1$ and $k_2=k_3=2$, so that $\br T_4 = \sqrt[3]{4}$. Finally, $T_2$ and $T_3$ are isomorphic, and are the tree in Lemma \ref{lem:geometricmean} with $n=4$, $k_1=1$ and $k_2=k_3=k_4=2$, which implies that $\br T_3 = \br T_4 = \sqrt[4]{8}$. Since $\br T_1 > \br T_2 = \br T_3 > \br T_4$, we conclude that $\pmc(E)=\{C_1,C_3\}$. 

Hence, $\psi_{C_1}$ is a KMS$_{\beta_1}$ state on $C^*(E)$ for $\beta_1=\log \br T_1 = \log 2$, and $\psi_{C_3}$ is a KMS$_{\beta_3}$ state on $C^*(E)$ for $\beta_3=\log \br T_3=\frac{3}{4}\log 2$. Moreover, there are no other KMS states on $C^*(E)$. 
Let $s_i$ be the greatest common divisor of lengths of nontrivial cycles in $E_{C_i}$. Then $s_1=3$ and $s_3=4$. Hence, $\psi_{C_i}$ is of type III$_{\lambda_i}$, where
\[ \lambda_1 = e^{-\beta_1 s_1} = \frac18,\ \lambda_3 = e^{-\beta_3 s_3}= \frac18 \]
by Theorem \ref{thm:typedetermination}. In conclusion, both $\psi_{C_1}$ and $\psi_{C_3}$ are of type III$_{1/8}$. 
\end{example}

\subsection{Bratteli Diagrams}
\label{sec:BD}

Bratteli used a special kind of directed graph in his study of AF-algebras in \cite{Bra72} which are now referred to as \emph{Bratteli diagrams}. 

\begin{definition}
\label{def:BD}
	The countable directed graph $E$ is a \emph{Bratteli diagram} if there is a disjoint union decomposition $E^0=\coprod_{n=0}^\infty V_n$ such that
	\begin{enumerate}[\upshape(i)]
		\item $|V_0|=1$ and $r(s^{-1}(V_n))= V_{n+1}$ for every $n\geq 0$;
		\item $0<|V_n|<\infty$ for every $n\geq 0$; 
		\item $|r^{-1}\{v\}|<\infty$ for every $v\in E^0\setminus V_0$;
		\item $0<|s^{-1}\{v\}|<\infty$ for every $v\in E^0$.
	\end{enumerate}
\end{definition}

For a graph satisfying condition (i) from Definition~\ref{def:BD}, we can strengthen the conclusion of Theorem~\ref{thm:general}:

\begin{proposition}
	\label{prop:BD}
	Let $E$ and $E'$ be graphs satisfying condition (i) from Definition~\ref{def:BD}, and let $\alpha$ and $\alpha'$ be the gauge actions on $\T C^*(E)$ and $\T C^*(E')$, respectively. If there is an isomorphism of C*-dynamical systems $(\T C^*(E),\alpha)\cong (\T C^*(E'),\alpha')$, then there exists a bijection $E^0\to E'^0$, $v\mapsto v'$, that maps $V_n$ onto $V_n'$ for every $n\geq 0$ such that $|E^kv|=|E'^kv'|$ for every $v\in V_n$ and $k\geq 0$.
\end{proposition}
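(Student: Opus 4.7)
My plan is to show that the level decomposition of a graph satisfying condition (i) from Definition~\ref{def:BD} is determined purely by the numbers $|E^k v|$ for $v \in E^0$ and $k \geq 0$, and then invoke Theorem~\ref{thm:general} to transport this structure across the isomorphism.

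First I would unpack condition (i). Since $E^0 = \coprod_n V_n$ is a disjoint union, every edge $e \in E^1$ has $s(e) \in V_n$ for a unique $n$; the equality $r(s^{-1}(V_n)) = V_{n+1}$ then forces $r(e) \in V_{n+1}$, so every edge of $E$ goes from some $V_n$ to $V_{n+1}$. A path of length $k$ ending at $v \in V_n$ must therefore traverse the levels $V_{n-k}, V_{n-k+1}, \ldots, V_n$, which is impossible when $k > n$. Consequently $|E^k v| = 0$ whenever $v \in V_n$ and $k > n$. Conversely, the equality $r(s^{-1}(V_n)) = V_{n+1}$ applied inductively (starting from $|V_0|=1$) shows that every $v \in V_n$ is the range of at least one path of length $n$ starting at the unique vertex of $V_0$, so $|E^n v| \geq 1$. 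Truncating such a path to its last $k$ edges shows $|E^k v| \geq 1$ for every $0 \leq k \leq n$.

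The key characterisation this yields is the following: for any $v \in E^0$, the unique $n$ with $v \in V_n$ is
\begin{equation}
n(v) \;=\; \max\{k \geq 0 : |E^k v| \neq 0\},
\end{equation}
and the analogous statement holds for $E'$. Now I apply Theorem~\ref{thm:general} to produce a bijection $E^0 \to E'^0$, $v \mapsto v'$, satisfying $|E^k v| = |E'^k v'|$ for every $k \geq 0$. The characterisation in the display above depends only on the numerical sequence $(|E^k v|)_{k \geq 0}$, so $n(v) = n(v')$ for every $v \in E^0$. This means the bijection sends $V_n$ into $V_n'$; by symmetry (applying the same argument to the inverse bijection) it maps $V_n$ onto $V_n'$. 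The identity $|E^k v| = |E'^k v'|$ for all $k \geq 0$ is then exactly the conclusion of the proposition.

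I do not anticipate any significant obstacle: the entire content beyond Theorem~\ref{thm:general} is the elementary graph-theoretic observation that condition~(i) rigidifies the levels, making them determined by the path-counting data that Theorem~\ref{thm:general} already preserves.
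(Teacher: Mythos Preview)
Your proof is correct and follows essentially the same approach as the paper's: both invoke Theorem~\ref{thm:general} to obtain the bijection preserving the numbers $|E^kv|$, then characterise the level of $v$ as $\max\{k : |E^kv|\neq 0\}$ using condition~(i). Your write-up is slightly more explicit about why edges respect the level structure and why $|E^kv|\geq 1$ for $k\leq n$, and you phrase the surjectivity via the inverse bijection rather than via preimages, but these are cosmetic differences.
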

\begin{proof} 
By Theorem~\ref{thm:general}, there exists a bijection $E^0\to E'^0$ $,v\mapsto v'$ such that $|E^kv|=|E'^kv'|$ for all $k\geq 0$.
It remains to show that this map sends $V_n$ onto $V'_n$ for every $n\geq 0$. If $v\in V_n$, then there are no paths of length strictly greater than $n$ that terminate at $v$. Moreover, by assumption, for each $k\leq n$, there is at least one path of length $k$ from $v_0$ to $v$. Hence, $n=\max\{k : |E^kv|\neq 0 \}$ (note that $|E^kv|\in\Nz\cup\{\infty\}$).
From this, we see that $v\mapsto v'$ defines an inclusion from $V_n$ into $V'_n$ for every $n\geq 0$. Moreover, the inverse image of $V_n'$ under map $v\mapsto v'$ is contained in $V_n$. Hence, $v\mapsto v'$ defines a bijection from $V_n$ onto $V_n'$.
\end{proof}

\begin{example}
The conclusion of Proposition~\ref{prop:BD} does not imply that $E$ and $E'$ are isomorphic as graphs. For instance, consider the (non-isomorphic) Bratteli diagrams $E$ and $E'$ depicted by:
	\begin{figure}[H]
		\centering
		\begin{tikzcd}[
		arrow style=tikz,
		>={latex} 
		]		
		& \bullet \arrow[dr,orange]\arrow[dl,green]  &\\
		v_1 \arrow[d] \arrow[drr,shift left=0.5ex,blue]\arrow[drr,shift right=0.5ex]&  & v_2 \arrow[d]\\
		\bullet\arrow[d]\arrow{drr} &  & \bullet \arrow[d]\arrow{dll}\\
		\bullet\arrow[d]\arrow{drr}  &  & \bullet \arrow[d]\arrow{dll} \\
		\vdots & &\vdots
		\end{tikzcd}
		\hspace{1cm}
		\begin{tikzcd}[
		arrow style=tikz,
		>={latex} 
		]		
		& \bullet \arrow[dr,orange]\arrow[dl,green]  &\\
		v_1' \arrow[d] \arrow[drr]&  & v_2' \arrow[d,shift left=0.5ex]\arrow[d,shift right=0.5ex,red]\\
		\bullet\arrow[d]\arrow{drr}  &  & \bullet \arrow[d]\arrow{dll} \\
		\bullet\arrow[d]\arrow{drr}  &  & \bullet \arrow[d]\arrow{dll} \\
		\vdots & &\vdots
		\end{tikzcd}
	\end{figure}
We will give a sketch of the proof that $(\T C^*(E),\alpha)\cong (\T C^*(E'),\alpha')$. The idea is similar to that used in \cite[Example~2.1]{BLRS}. There is a canonical (level-preserving) bijection $E^0\to E'^0$, $v\mapsto v'$. Let $a$ and $b$ denote the green and orange edges in $E$, respectively, and let $a'$ and $b'$ denote the green and orange edges in $E'$, respectively. Let $g$ denote the blue edge in $E$, and let $g'$ denote the red edge in $E'$. Let $E^1\to E'^1$, $e\mapsto e'$, be the bijection that sends $a$ to $a'$, $b$ to $b'$, and $g$ to $g'$ and is the canonical identification on all other edges. Let $v_1,v_2$ and $v_1',v_2'$ be the vertices at the first level of $E$ and $E'$, respectively, as indicated in the diagram. For $e\in E^1$, let 
\[
t_e:=\begin{cases}
 s_{a'}+s_{b'}s_{g'}s_{g'}^*& \text{ if } e=a,\\
s_{b'}-s_{b'}s_{g'}s_{g'}^* & \text{ if } e=b,\\
s_{e'} & \text { otherwise},
\end{cases}
\]
and for $v\in E^0$, put 
\[
q_v:=\begin{cases}
 p_{v_1'}+s_{g'}s_{g'}^* & \text{ if } v=v_1,\\
 p_{v_2'}-s_{g'}s_{g'}^*& \text{ if } v=v_2,\\
 p_{v'} & \text{ otherwise.}
\end{cases}
\]

A tedious but routine calculation shows that partial isometries $t_e$, $e\in E^1$, and the projections $q_v$, $v\in E^0$ in $\T C^*(E')$, satisfy the relations defining $\T C^*(E)$, so there exists a *-homomorphism $\T C^*(E)\to \T C^*(E')$ determined by $p_v\mapsto q_v$ for $v\in E^0$ and $s_e\mapsto t_e$ for $e\in E^1$. It is routine to write down the inverse for this map and to see that it is an $\Rz$-equivariant isomorphism.
\end{example}

\subsection{Opposite graphs of Bratteli diagrams}
\label{sec:BDop}

In general, there may be a stark contrast between the amount of information about $E$ contained in the C*-algebra $\T C^*(E)$ and amount of information about $E$ contained in the C*-dynamical system $(\T C^*(E),\alpha^E)$. In this section, we demonstrate this by considering opposite graphs of Bratteli diagrams.

\begin{proposition}
	\label{prop:oppBD}
If $E$ is the opposite graph of a Bratteli diagram, then  $\T C^*(E)\cong \bigoplus_{v\in E^0} \cK(\ell^2E^*v)$.
\end{proposition}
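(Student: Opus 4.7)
The plan is to pass to the groupoid model $(C^*(\G_E),\sigma^c)$ via Proposition~\ref{prop:gpoidmodel} and then exploit the very rigid structure of $\G_E$ in this special case. First I would observe that a Bratteli diagram admits no infinite forward paths, so in $E = B^{\mathrm{op}}$ one has $E^\infty = \emptyset$ and hence $E^{\leq\infty} = E^*$. Moreover, condition (iii) of Definition~\ref{def:BD} says that each vertex in $B$ receives only finitely many edges, which in $E$ translates to $r(\mu)E^1$ being finite for every $\mu \in E^*$. Consequently $\{\mu\} = \cZ_{r(\mu)E^1}(\mu)$ is a basic open set, so $E^{\leq\infty} = E^*$ carries the discrete topology and $\G_E$ is a discrete, countable, principal groupoid.

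Next I would analyse the orbit structure. Two paths $\mu_1,\mu_2 \in E^*$ lie in the same $\G_E$-orbit iff they share a common right tail in $E^{\leq\infty}$; since $E^\infty = \emptyset$, this is equivalent to $r(\mu_1) = r(\mu_2)$. Hence the $\G_E$-orbits are exactly the countable sets $E^*v$ for $v \in E^0$, and each is clopen in $E^*$. Because a Bratteli diagram has no nontrivial cycles, $v E^* v = \{v\}$ for every $v$, so for each pair $(\mu_1,\mu_2) \in E^*v \times E^*v$ there is a unique element $(\mu_1,\,|\mu_1|-|\mu_2|,\,\mu_2)$ of $\G_E$ with range $\mu_1$ and source $\mu_2$. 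This identifies $\G_E|_{E^*v}$, as a topological groupoid, with the pair groupoid on the countable discrete set $E^*v$.

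To finish, the clopen decomposition into orbits yields a topological decomposition $\G_E = \bigsqcup_{v \in E^0} \G_E|_{E^*v}$ into open subgroupoids. The full $C^*$-algebra of a disjoint union of \'etale groupoids is the $c_0$-direct sum of the individual full $C^*$-algebras, and the $C^*$-algebra of the pair groupoid on a countable discrete set $X$ is $\cK(\ell^2 X)$ (with compactly supported functions corresponding to finite sums of matrix units; amenability of the pair groupoid ensures full and reduced $C^*$-algebras coincide). Combining these facts with Proposition~\ref{prop:gpoidmodel} yields
\[
\T C^*(E) \;\cong\; C^*(\G_E) \;\cong\; \bigoplus_{v \in E^0} C^*\bigl(\G_E|_{E^*v}\bigr) \;\cong\; \bigoplus_{v \in E^0} \cK(\ell^2 E^*v),
\]
where $\bigoplus$ is the $c_0$-direct sum.

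There is essentially no hard step here once the rigidity is exposed: the entire content of the proposition is the observation that opposites of Bratteli diagrams give groupoids with no nontrivial dynamics at all. The only points requiring a little care are verifying discreteness of $E^{\leq\infty}$ together with the orbit description, and citing (or briefly justifying) the standard identification of the $C^*$-algebra of a pair groupoid on a countable discrete set with $\cK(\ell^2 X)$.
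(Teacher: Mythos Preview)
Your proposal is correct and follows essentially the same route as the paper: both arguments reduce to the groupoid model, observe that $E^{\leq\infty}=E^*$ is discrete with trivial isotropy and orbits $E^*v$, and then identify each $C^*(\G_E|_{E^*v})$ with $\cK(\ell^2 E^*v)$ (the paper cites \cite[Theorem~3.1]{MRW} for transitive groupoids, while you spell out the pair-groupoid identification directly). One small slip: a Bratteli diagram $B$ \emph{does} admit infinite forward paths (condition~(iv) forces this); what you need is that $B$ admits no infinite \emph{backward} paths, equivalently $E=B^{\mathrm{op}}$ has $E^\infty=\emptyset$ because paths in $E$ strictly decrease level---your conclusion is right, only the justification is misstated.
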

\begin{proof}
Since $E^\infty=\emptyset$, we have $\G_E^{(0)}=E^*$, which is a countable discrete space. Since every point in $E^*$ has trivial isotropy, and the orbits for the action of $\G_E$ on $E^*$ are precisely the sets $E^*v$ for $v\in E^0$, there is a disjoint union decomposition $\G_E=\bigsqcup_{v\in E^0}\G_E\vert_{E^*v}$, and thus an isomorphism $\T C^*(E)\cong C^*(\G_E)\cong \bigoplus_{v\in E^0} \cK(\ell^2E^*v)$. Here, we used that the restriction groupoid $\G_E\vert_{E^*v}$ is transitive (see \cite[Theorem~3.1]{MRW}).
\end{proof} 

\begin{remark}
	The isomorphism from Proposition~\ref{prop:oppBD} carries $\alpha^E$ to the time evolution $t\mapsto \bigoplus_v\Ad e^{itH_v}$, where $\Ad e^{itH_v}$ is the automorphism of $\cK(\ell^2E^*v)$ given by conjugation with $e^{itH_v}$. From this, one can deduce that $\E_{\rm I}(\KMS_{\beta}(\T C^*(E)))$ can be identified, as a topological space, with the countable discrete space $E_{\beta\textup{-reg}}^0$.
\end{remark}

As an immediate consequence of Proposition~\ref{prop:oppBD}, we obtain:
\begin{corollary}
For opposite graphs of Bratteli diagrams, the isomorphism class of the C*-algebra $\T C^*(E)$ does not depend on $E$.
\end{corollary}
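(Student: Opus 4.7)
The plan is to deduce this directly from Proposition~\ref{prop:oppBD}, which already supplies the isomorphism $\T C^*(E) \cong \bigoplus_{v \in E^0} \cK(\ell^2 E^*v)$. What remains is to observe that, up to isomorphism, the right-hand side is the same C*-algebra for every opposite graph of a Bratteli diagram, namely a countably infinite direct sum of copies of the compact operators on a separable infinite-dimensional Hilbert space.

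First I would check that $E^0$ is countably infinite whenever $E$ is the opposite of a Bratteli diagram $B$. By Definition~\ref{def:BD}, $B^0 = \coprod_{n=0}^\infty V_n$ with $|V_0| = 1$, each $V_n$ finite, and $r(s^{-1}(V_n)) = V_{n+1}$; combined with condition~(iv), namely $|s^{-1}\{v\}| \geq 1$ for every vertex, an easy induction shows that each $V_n$ is nonempty, so $B^0 = E^0$ is countably infinite.

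Next I would check that for each $v \in E^0$, the set $E^*v$ of finite paths in $E$ terminating at $v$ is countably infinite. Under the opposite correspondence, $E^*v$ is identified with the set of finite paths in $B$ starting at $v$; since every vertex of $B$ emits at least one edge (condition~(iv) again), from $v$ one can build paths of arbitrary length, so $E^*v$ is infinite, and it is plainly countable.

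Combining these observations with Proposition~\ref{prop:oppBD} yields
\[
\T C^*(E) \,\cong\, \bigoplus_{v \in E^0} \cK(\ell^2 E^*v) \,\cong\, \bigoplus_{n=1}^\infty \cK(\ell^2(\Nz)),
\]
where the right-most C*-algebra depends on no feature of $E$. This completes the proof. No real obstacle is anticipated: the content is entirely in Proposition~\ref{prop:oppBD}, and the remaining step is the two elementary cardinality checks above.
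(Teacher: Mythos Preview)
Your proposal is correct and matches the paper's approach exactly: the paper gives no proof at all, simply stating this as ``an immediate consequence of Proposition~\ref{prop:oppBD}.'' You have filled in precisely the two elementary cardinality checks (that $E^0$ is countably infinite and that each $E^*v$ is countably infinite) that make the corollary genuinely immediate, and both checks are correct as written.
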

Whereas, by Theorem~\ref{thm:general}, we know that if we keep track of the time evolution $\alpha^E$, then we can recover lots of combinatorial information about $E$. This is particularly stark in the case of opposite graphs of certain trees, which we now explain.

\begin{example}
For $d\geq 2$, the $d$-ary tree is the graph $T$ such that $T^0=\bigsqcup_{n=0}^\infty V_n$ for non-empty, finite sets $V_n$ such that $r(s^{-1}(V_n))\subseteq V_{n+1}$ for all $n$, $|V_0|=\{o\}$, every vertex emits exactly $d$ edges, and every vertex other than the root, $o$, receives exactly one edge.

Consider the C*-dynamical system $(\T C^*(T^{\rm op}),\alpha^{T^{\rm op}})$ of the opposite graph $T^{\rm op}$ of the $d$-ary tree $T$. Using Proposition~\ref{prop:betac=dim}, one can show that $\beta_v=\log d$ for every vertex $v$ of $T^{\rm op}$. Therefore, if $T$ and $T'$ are the $d$-ary and $d'$-ary trees, respectively ($d,d'\in\Zz_{>1}$), then we have $(\T C^*(T^{\rm op}),\alpha^{T^{\rm op}})\cong (\T C^*(T'^{\rm op}),\alpha^{T'^{\rm op}})$ if and only if $d=d'$. Whereas by Proposition~\ref{prop:oppBD}, we have $\T C^*(T^{\rm op})\cong \T C^*(T'^{\rm op})$ for all $d,d'\geq 1$.
\end{example}

\begin{remark}
The analogue of Proposition~\ref{prop:BD} for opposite graphs of Bratteli diagrams fails spectacularly. Indeed, if $T$ is the $d$-ary tree ($d\in\Zz_{>1}$), then any bijection $(T^{\rm op})^0\to (T^{\rm op})^0$ arises from an $\Rz$-equivariant isomorphism of the C*-dynamical system $(\T C^*(T^{\rm op}),\alpha^{T^{\rm op}})$; this can be seen as follows: By Proposition~\ref{prop:oppBD}, $\T C^*(T^{\rm op})\cong \bigoplus_{v\in (T^{\rm op})^0}\cK(\ell^2((T^{\rm op})^*v))$, and the bijection on the vertex set defines an $\Rz$-equivariant automorphism by permuting the summands in this decomposition.
\end{remark}

It is relatively easy to control the growth of lengths of paths in opposite graphs of Bratteli diagrams, so they can be combined to construct examples of C*-dynamical systems that exhibit wild phase transitions, as we now demonstrate.

\begin{proposition} \label{prop:wildphasetransition}
    For any countable infinite subset $C \subset (0,\infty)$, there exists a countable graph $E$ such that the set of critical inverse temperatures 
    \[ \{ \beta_v \colon v \in \bigcup_{\beta>0} E^0_{\beta\textup{-reg}} \}\]
    is equal to $C$.  
\end{proposition}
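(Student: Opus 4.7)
The plan is to build $E$ as a disjoint union of self-contained pieces, one for each element of $C = \{\beta_n : n \in \Nz\}$. For each $\beta > 0$ I will exhibit a countable graph $E^{(\beta)}$ in which \emph{every} vertex has critical inverse temperature exactly $\beta$. Setting $E := \bigsqcup_n E^{(\beta_n)}$, any finite path in $E$ lies inside a single component, so $|E^k v| = |(E^{(\beta_n)})^k v|$ for $v \in E^{(\beta_n)}$; hence $\beta_v = \beta_n$ and $v \in E^0_{\beta\text{-reg}}$ for every $\beta > \beta_n$, placing $v$ inside $\bigcup_{\beta > 0} E^0_{\beta\text{-reg}}$ and identifying the set of critical inverse temperatures of $E$ with $C$.

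It therefore suffices to construct $E^{(\beta)}$ for a single $\beta > 0$. The idea is to use a rooted tree structured as an opposite Bratteli diagram in the spirit of Section~\ref{sec:BDop}, but with a carefully chosen non-constant branching. Write $d' := \lfloor e^\beta\rfloor$ and $d := \lceil e^\beta\rceil$; when $e^\beta \in \Nz$ take $E^{(\beta)}$ to be the opposite of the $d$-ary tree. Otherwise $d = d'+1$, and set $\alpha := (\beta - \log d')/(\log d - \log d') \in (0,1)$ and define a Sturmian sequence $(b_k)_{k \geq 1}$ with $b_k \in \{d',d\}$ by declaring $b_k = d$ precisely when $\lfloor k\alpha\rfloor > \lfloor (k-1)\alpha\rfloor$. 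Then $E^{(\beta)}$ is the graph with vertex set $\bigsqcup_{k \geq 0} V_k$, $|V_0| = 1$, realised as a rooted tree (edges pointing toward the root) in which every vertex at level $k$ has exactly $b_{k+1}$ children in $V_{k+1}$.

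For any vertex $w$ at level $j$, the tree $T(E^{(\beta)}, w)$ has $D_k(w) := \prod_{i=j+1}^{j+k} b_i$ nodes at depth $k$, so by Proposition~\ref{prop:betac=dim}(i),
\[
\beta_w = \log \ugr T(E^{(\beta)}, w) = \limsup_{k \to \infty} \frac{1}{k} \sum_{i=j+1}^{j+k} \log b_i.
\]
The number of indices $i \in \{j+1,\dots,j+k\}$ with $b_i = d$ equals $\lfloor (j+k)\alpha \rfloor - \lfloor j\alpha\rfloor$, so the average above rewrites as $\log d' + \frac{\lfloor (j+k)\alpha\rfloor - \lfloor j\alpha\rfloor}{k}(\log d - \log d')$. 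Because $|\lfloor (j+k)\alpha\rfloor - \lfloor j\alpha\rfloor - k\alpha| < 1$, this converges to $\log d' + \alpha(\log d - \log d') = \beta$ uniformly in $j$, giving $\beta_w = \beta$ for every vertex $w$ of $E^{(\beta)}$.

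The main issue is ensuring that every non-root vertex — not just the root — has the prescribed critical inverse temperature, so that no extraneous values appear in the final set $\{\beta_v\}$. The Sturmian rule for $(b_k)$ is exactly what delivers the uniform equidistribution needed for this; a merely asymptotic choice (e.g., prescribing $|V_k| = \lceil e^{\beta k}\rceil$ with arbitrary edge assignments) would a priori leave room for vertices deep in the tree to have different growth rates. Once $E^{(\beta)}$ has the desired uniform property, the disjoint-union step is automatic.
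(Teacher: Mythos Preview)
Your proof is correct and shares the paper's overall strategy: construct, for each element of $C$, a graph in which every vertex has that element as its critical inverse temperature, then combine the pieces. The implementations differ in two places. First, for a single $c\in C$ the paper uses a \emph{linear chain} with one vertex $v_k$ per level and $a_k$ parallel edges from $v_k$ to $v_{k-1}$, where $(a_k)$ is the greedy sequence $a_k=\lfloor e^{ck}/(a_1\cdots a_{k-1})\rfloor$, which gives $a_1\cdots a_n\asymp e^{cn}$; you instead use a simple rooted tree with Sturmian branching $(b_k)\in\{d',d\}^{\Nz}$. Both constructions feed into Proposition~\ref{prop:betac=dim}(i) in the same way, the point in each case being that $\limsup_n\bigl(\prod_{i=j+1}^{j+n}a_i\bigr)^{1/n}$ (respectively with $b$ in place of $a$) is independent of the shift $j$. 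The chain is more economical---one vertex per level---while your tree avoids multi-edges at the cost of the equidistribution argument. Second, the paper joins all pieces at a common extra vertex $o$ and then checks that $o$ is not $\beta$-regular for any $\beta$ (this uses that $C$ is infinite); your disjoint union sidesteps that verification entirely.
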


\begin{proof}
    Let $D = \exp C \subset (1,\infty)$, and fix $d \in D$. Let $\{a_n\}_{n=1}^\infty$ be a sequence of positive integers satisfying  
    \[ 0< \underset{n>0}{\inf} \frac{a_1 \cdots a_n}{d^n} \leq \sup_{n>0} \frac{a_1 \cdots a_n}{d^n} < \infty. \]
    For example, if we define $a_1$ to be the integer part of $d$, and $a_n$ to be the integer part of $d^n/(a_1\cdots a_{n-1})$ for $n>1$, then the sequence $\{a_n\}$ satisfies this property. Let $E_d$ be the graph defined by letting $E_d^0=\{v_k^{(d)} \colon k \in \Nz\}$ and putting $a_k$ edges from $v_{k}^{(d)}$ to $v_{k-1}^{(d)}$. Let $k \in \Nz$, and let $T_k = T(E_d,v_k^{(d)})$. Then, 
    \begin{align}
        \log \ugr T_k = \limsup_{n\to \infty} \frac{\log(a_{k+1}\cdots a_{k+n})}{n}=\log d.
    \end{align}
    Let $E$ be the graph obtained by connecting all $E_d$ to one vertex $o \in E^0$. Namely, 
    \[ E^0 = \{o\} \cup \bigcup_{d \in D} E_d^0, \]
    and we put the same edges as $E_d$ between vertices in $E_d^0$, and put an edge from $v_0^{(d)}$ to $o$. Note that $o \in E^0$ is not $\beta$-summable because $C$ is infinite. Hence, we have
    \[ \bigcup_{\beta>0} E^0_{\beta\textup{-reg}} = \bigcup_{d \in D} E_d^0 \]
    and for each $v=v_k^{(d)} \in E_d^0$ with $d \in D, k \in \Nz$, we have $\beta_v = \log d$ by Proposition \ref{prop:betac=dim}, which completes the proof. 
\end{proof}

\section{$\beta$-summability of infinite paths when $\beta<0$}
\label{sec:negbeta}
We have seen in Lemma~\ref{lem:xnotsummable} that for $\beta>0$, the orbit of an infinite path is never $\beta$-summable. On the other hand, if $\beta<0$, the orbit of a finite path is rarely $\beta$-summable (see Remark~\ref{rmk:nec} below), and the orbit of an infinite path can become $\beta$-summable. In this section, we shall determine when the orbit of an infinite path is $\beta$-summable in the case $\beta<0$.

\subsection{Necessary conditions for $\beta$-summability of infinite paths}
Given an infinite path $x=x_0x_1...\in E^\infty$, we let $[x]:=\G_Ex=\{y\in E^\infty : \exists n,m\geq 0 \text{ with }x_{i+n}=y_{i+m}\;\forall i\geq 0\}$ be the orbit of $x$. Note that $[x]$ is precisely the tail equivalence class of $x$.

\begin{lemma}
\label{lem:nec0}
   Let $x=x_0x_1...\in E^\infty$. 
   \begin{enumerate}[\upshape(i)]
       \item The orbit $[x]$ is consistent (see Section~\ref{sec:betasummable}) if and only if $x$ is not eventually periodic.
       \item If there exists an infinite sequence $(\gamma_n)_n$ in $\G_E$ with $s(\gamma_n)=x$, $r(\gamma_n)\neq r(\gamma_m)$ for $n\neq m$, and $\inf_{n}c(\gamma_n)>-\infty$, then $[x]$ is not $\beta$-summable for all $\beta<0$.
       \end{enumerate}
\end{lemma}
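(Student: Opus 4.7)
The plan is to unwind the definition of $\G_E$ and, in both parts, reduce everything to an explicit statement about tails of the path $x$.

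For part~(i), I would describe the isotropy group $(\G_E)_x^x$ concretely. Any element has the form $\gamma=(x,k,x)$ with $c(\gamma)=k$, and by the presentation of $\G_E$ recalled in Section~\ref{ssec:backgrounToeplitz}, such an element exists if and only if $x=\mu y=\nu y$ for some $\mu,\nu\in E^*$ and $y\in E^{\leq\infty}$ with $k=|\mu|-|\nu|$. Setting $n:=\min(|\mu|,|\nu|)$, this reads precisely as ``the tail $x_nx_{n+1}\ldots$ has period $|k|$''. Hence $c((\G_E)_x^x)=\{0\}$ is equivalent to $x$ not being eventually periodic; for the nontrivial direction, if $x$ has period $p>0$ from position $n$, one takes $\nu=x_0\cdots x_{n-1}$, $\mu=x_0\cdots x_{n+p-1}$, $y=x_nx_{n+1}\cdots$ to exhibit a nonzero isotropy element with $c$-value $p$. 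The only minor care needed is to include the degenerate case $|\mu|=|\nu|=0$ (giving the trivial element $(x,0,x)$).

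For part~(ii), if $x$ is eventually periodic then $[x]$ is not consistent by~(i), hence not $\beta$-summable by convention (see Section~\ref{sec:betasummable}). So I may assume $[x]$ is consistent, so that $l_x\colon[x]\to(0,\infty)$ is unambiguously defined; since $s(\gamma_n)=x\in E^\infty$, the distinct points $r(\gamma_n)$ automatically lie in $[x]\subseteq E^\infty$, and consistency gives $l_x(r(\gamma_n))=e^{-c(\gamma_n)}$. Hence
\[
\sum_{z\in[x]} l_x(z)^\beta \;\geq\; \sum_{n} e^{-\beta c(\gamma_n)}.
\]
Writing $M:=\inf_n c(\gamma_n)>-\infty$ and using $\beta<0$, every term on the right is bounded below by the positive constant $e^{-\beta M}$, so the series diverges. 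Neither step presents a real obstacle; the entire content is the explicit dictionary between isotropy of $\G_E$ at $x$ and eventual periodicity of $x$, and the observation that a uniform lower bound on $c(\gamma_n)$ forces a divergent contribution to $l_x^\beta$ when $\beta$ is negative.
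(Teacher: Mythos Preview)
Your proof is correct and follows essentially the same approach as the paper's: for~(i) the paper observes that consistency is equivalent to trivial isotropy and then cites \cite{BCW} for the equivalence with non-eventual-periodicity, whereas you unpack that equivalence directly (which is perfectly fine and arguably clearer); for~(ii) your argument is identical to the paper's, with the added detail of explicitly handling the non-consistent case and spelling out the uniform lower bound $e^{-\beta M}$ on the terms.
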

\begin{proof}
 Proof of (i): The infinite path $x\in E^\infty$ has non-trivial isotropy if and only if it is eventually periodic, see, for instance, the proof of \cite[Proposition~2.3]{BCW}. Moreover, it is easy to see that $[x]$ is consistent if and only if $x$ has trivial isotropy.
 
 Proof of (ii): Let $\beta<0$ and suppose $[x]$ is consistent, so that $l_x\colon [x]\to (0,\infty)$ is well-defined. We have
 \[
 \sum_{z\in[x]}l_x(z)^\beta\geq \sum_{n=1}^\infty e^{-\beta c(\gamma_n)}=\infty,
 \]
 so that $[x]$ is not $\beta$-summable. 
\end{proof}

Recall that a vertex $v \in E^0$ is called a \emph{source (in $E$)} if there is no edge $e$ such that $r(e)=v$. 
\begin{lemma}
	\label{lem:nec}
	Let $x=x_0x_1...\in E^\infty$ and $\beta\in(-\infty,0)$. If $[x]$ is $\beta$-summable, then the set $E^*s(x_i)$ is finite for every $i\geq 0$. Moreover, we have $s(x_i)\neq s(x_j)$ whenever $i\neq j$, and there exists $\mu\in E^*$ such that $s(\mu)$ is a source and $r(\mu)=s(x)$.
\end{lemma}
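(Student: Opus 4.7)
The plan is to exploit the $\beta$-summability condition by injecting $E^*s(x_i)$ into the orbit $[x]$ for each $i\geq 0$. Set $v_i := s(x_i)$ and let $y_i := x_ix_{i+1}x_{i+2}\cdots$ denote the tail of $x$ starting at position $i$. Since $[x]$ is $\beta$-summable it is, in particular, consistent, so Lemma~\ref{lem:nec0}(i) ensures that $x$ is not eventually periodic. A short edge-by-edge comparison shows that under this hypothesis the map $\mu\mapsto \mu y_i$ is an injection from $E^*v_i$ into $[x]$, because $\mu_1 y_i = \mu_2 y_i$ with $|\mu_1|<|\mu_2|$ would force $y_i$ to be periodic with period $|\mu_2|-|\mu_1|$. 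The element $(\mu y_i,|\mu|-i,x)\in\G_x^{\mu y_i}$ gives $l_x(\mu y_i)^\beta = e^{\beta(i-|\mu|)}$, so restricting the summability condition to this injective image yields
\[
e^{\beta i}\sum_{\mu\in E^*v_i}e^{-\beta|\mu|} \;\leq\; \sum_{z\in[x]}l_x(z)^\beta \;<\; \infty.
\]
Because $\beta<0$, every term $e^{-\beta|\mu|}$ is at least $1$, so $E^*v_i$ must be finite.

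The second assertion is then immediate from the first: if $s(x_i)=s(x_j)$ for some $i<j$ and $v := s(x_i)$, then $x_ix_{i+1}\cdots x_{j-1}$ is a nontrivial cycle at $v$, and its positive powers yield infinitely many distinct elements of $E^*v$, contradicting the finiteness just established.

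For the final claim, I would apply the first part with $i=0$ to conclude that $E^*s(x)$ is finite, so it contains a path $\mu$ of maximum length. If some edge $e\in E^1$ satisfied $r(e)=s(\mu)$, then $e\mu$ would be an element of $E^*s(x)$ of strictly greater length, a contradiction; hence $s(\mu)$ receives no edge of $E$ and is thus a source in $E$, while $r(\mu)=s(x)$ by construction.

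The argument has no real obstacle. The key observation is that for $\beta<0$ the weights $e^{-\beta|\mu|}$ are all bounded below by $1$, which collapses summability of the series into a cardinality bound on its index set; the only mild subtlety is keeping track of the sign convention $l_x(z)=e^{-c(\gamma_z)}$ so that the factor of $e^{\beta i}$ lands in the right place.
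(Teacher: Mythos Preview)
Your proof is correct and follows essentially the same approach as the paper: both inject $E^*s(x_i)$ into $[x]$ via $\mu\mapsto\mu y_i$ and use that the resulting terms in the summability condition are bounded below, then derive the remaining two assertions from the finiteness of each $E^*s(x_i)$. Your argument is in fact slightly more careful than the paper's---you make explicit that injectivity of $\mu\mapsto\mu y_i$ requires $x$ not to be eventually periodic (which the paper's ``because the $\mu_j$'s are distinct'' glosses over), and your maximum-length-path argument for the last claim is marginally more direct than the paper's infinite backward extension.
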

\begin{proof}
Suppose there exists $i\geq 0$ such that $|E^*s(x_i)|=\infty$. Let $\mu_1,\mu_2,...$ be any listing of $E^*s(x_i)\setminus\{s(x_i)\}$. For each $n\geq 1$, consider the element $\gamma_n:=(\mu_nx_ix_{i+1}...,|\mu_n|-i,x)\in\G_E$. We have $s(\gamma_n)=x$, $r(\gamma_n)\neq r(\gamma_m)$ for $n\neq m$ (because the $\mu_j$'s are distinct), and $c(\gamma_n)\geq -i$ for all $n$. Now Lemma~\ref{lem:nec0}(ii) implies that $[x]$ is not $\beta$-summable. This settles the first claim.

It remains to show that the second two properties follow from finiteness $E^*s(x_i)$ for every $i\geq 0$. First, suppose there exists $i\neq j$ such that $s(x_i)=s(x_j)$. Without loss of generality, we may assume that $i<j$. Consider the cycle $\mu:=x_i...x_{j-1}$ at $s(x_i)$. For each $n\geq 1$, we have $\mu^n\in E^*s(x_i)$, which contradicts that $E^*s(x)$ is finite.
Second, suppose there is no finite path $\mu\in E^*$ such that $s(\mu)$ is a source and $r(\mu)=s(x)$. Then there exists edges $e_{-i}$ for every $i\geq 1$ such that $r(e_{-1})=s(x)$ and $r(e_{-(i+1)})=s(e_{-i})$ for every $i\geq 1$. But now we have $e_{-n}...e_{-1}\in E^*s(x)$ for every $n\geq 1$, which contradicts that $E^*s(x)$ is finite.
\end{proof}

\begin{remark}
\label{rmk:nec}
If $\beta<0$, then by the same argument as in Lemma~\ref{lem:nec}, we see that $v\in E^0$ is $\beta$-summable if and only if $E^*v$ is finite. If $E^*v$ is finite, then the GNS representation of $\varphi_{\beta,v}$ is finite-dimensional for all $\beta$. Thus, we will focus on $\beta$-summability for infinite paths.
\end{remark}

\subsection{Necessary and sufficient conditions for $\beta$-summability of infinite paths}
\label{ssec:N&S}

Let $\beta\in(-\infty,0)$, and suppose $x\in E^\infty$ is such that $[x]$ is $\beta$-summable. Then by Lemma~\ref{lem:nec}, $x$ is tail equivalent to an infinite path beginning at a source. Hence, when we consider the tail equivalence class of $x$, we can always assume that $x$ begins at a source. 

For an infinite path $x=x_0x_1\cdots\in E^\infty$, let $V_x = \{ v \in E^0 \mid vE^*s(x_n) \neq \emptyset \mbox{ for some }n\geq 0\}$ 
and  $W_x = \{ s(x_j) \mid j=0,1,\cdots \}$.

In this section, we assume that the graph $E$ satisfies the following conditions: 
\begin{enumerate}
	\item There is an infinite path $x_E=x_0x_1\cdots \in E^\infty$ such that $v_0:=s(x_0)$ is a source. Fix such an infinite path $x_E$ and let $v_i:=s(x_i)$ for $i\geq 0$. Note that $W_{x_E} = \{v_0, v_1, \cdots \}$. 
	\item For every $v\in E_S^0$, we have $|vE_S^1|<\infty$ (that is, $E_S$ is row-finite), where $S=V_{x_E}$. 
	\item For $S=V_{x_E}$, there are no nontrivial cycles in $E_S$. 
	\item $V_{x_E} \setminus W_{x_E}$ is a finite set. 
\end{enumerate}

In this setting, the orbit of any vertex in $V_{x_E}$ is always a finite set. 
Such finite orbits are $\beta$-summable for all $\beta \in \Rz^*$, which gives rise to KMS states with finite-dimensional GNS representations. 
In this section, we are interested in extremal KMS states on $\T C^*(E)$ with infinite-dimensional GNS representations. Such KMS states are necessarily supported in $E^\infty$. 

We are interested in the $\beta$-summability of $[x_E]$, and more generally, $e^\beta$-conformal measures whose support contains $[x_E]$. We point out that $x_E$ has trivial isotropy, so that $[x_E]$ is always consistent (cf. Section~\ref{sec:betasummable}).
By Lemma~\ref{lem:nec}, conditions (1) and (3) are necessary to have non-trivial KMS$_\beta$ states of $\cT C^*(E)$ for negative $\beta$. Conditions (2) and (4) are just technical conditions; they imply that there are only finitely many edges whose source or range does not belong to $W_{x_E}$. Therefore, $\{v_n,v_{n+1},\cdots\}$ is a hereditary set for large $n$. 

The next lemma allows us to localise certain problems of KMS states to $V_{x_E}$. 
\begin{lemma} \label{lem:harmonicvectorextension}
    Suppose that a subset $S$ of $E^0$ satisfies that if $e \in E^1$ and $r(e) \in S$, then $s(e) \in S$. Then, any normalised $\beta$-harmonic vector $\tilde{\harm}$ of $E_S$ extends to a normalised $\beta$-harmonic vector $\harm$ of $E$ by 
    \begin{equation}
        \harm_v = \begin{cases} \tilde{\harm}_v & \mbox{if } v \in S \\
        0 & \mbox{if } v \not\in S. \end{cases}
    \end{equation}
\end{lemma}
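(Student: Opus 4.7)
The plan is to verify the harmonic identity $\sum_{w \in E^0} a_{vw}\harm_w = e^\beta \harm_v$ directly at every vertex $v \in E^0$, where $A_E = [a_{vw}]$, by splitting into the two cases $v \in S$ and $v \notin S$. The normalisation is automatic, since $\sum_{v \in E^0} \harm_v = \sum_{v \in S} \tilde\harm_v = 1$, so nothing needs to be done there.

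For $v \in S$, the key observation is that whenever $w \in S$, every edge in $E^1$ from $v$ to $w$ already lies in $E^1_S$ (both endpoints are in $S$ by construction of $E_S$), so the entry $a_{vw}$ of $A_E$ agrees with the corresponding entry of $A_S$. Since $\harm$ vanishes off $S$, the full sum $\sum_{w \in E^0} a_{vw}\harm_w$ collapses to $\sum_{w \in S} (A_S)_{vw}\, \tilde\harm_w$, which equals $e^\beta \tilde\harm_v = e^\beta \harm_v$ by the $\beta$-harmonicity of $\tilde\harm$ for the subgraph $E_S$.

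For $v \notin S$, the required identity is $\sum_{w \in E^0} a_{vw}\harm_w = 0$. Terms with $w \notin S$ vanish because $\harm_w = 0$ there, so it remains to show that the terms with $w \in S$ also vanish. This is precisely where the hypothesis on $S$ enters: by assumption, any edge $e \in E^1$ with $r(e) \in S$ satisfies $s(e) \in S$, so no edge from $v \notin S$ can terminate at any $w \in S$. Therefore $a_{vw} = 0$ whenever $v \notin S$ and $w \in S$, and the sum is zero as required.

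There is no real obstacle here; the lemma is a direct bookkeeping check. The only substantive content is the observation that the condition on $S$ --- closure of $S$ under taking sources of incoming edges --- prevents any ``incoming flux'' into $S$ from its complement, which is exactly what ensures that zero-extending $\tilde\harm$ preserves the harmonic identity at vertices outside $S$.
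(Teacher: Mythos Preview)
Your proof is correct and follows essentially the same route as the paper's: split into the cases $v\in S$ and $v\notin S$, use that $\harm$ vanishes off $S$ to collapse the sum, and invoke the hypothesis on $S$ to get $a_{vw}=0$ when $v\notin S$, $w\in S$. If anything, you are slightly more explicit than the paper in noting that $(A_E)_{vw}=(A_S)_{vw}$ for $v,w\in S$ and in addressing the normalisation.
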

\begin{proof}
    Let $A_E = [a_{vw}]_{v,w \in E^0}$. It suffices to show 
    \begin{equation}\label{eqn:lemmaharmonic}
        \sum_{w \in E^0} a_{vw}\harm_w = e^\beta \harm_v
    \end{equation}
    for any $v \in E^0$. Note that the left hand side of Equation \eqref{eqn:lemmaharmonic} is equal to $\sum_{w \in S} a_{vw}\tilde{\harm}_w$ by definition of $\harm$. If $v \in S$, then 
    \begin{equation}
        \sum_{w \in S} a_{vw}\tilde{\harm}_w = e^\beta \tilde{\harm}_v = e^\beta \harm_v
    \end{equation}
    since $\tilde{\harm}$ is a normalised $\beta$-harmonic vector of $E_S$. Hence Equation \eqref{eqn:lemmaharmonic} holds in this case. If $v \not \in S$, then $a_{vw} = 0$ for any $w \in S$ by assumption, so that we have
    \begin{equation}
        \sum_{w \in S} a_{vw}\tilde{\harm}_w = 0 = e^\beta \harm_v. 
    \end{equation}
    Hence, Equation \eqref{eqn:lemmaharmonic} is also true in this case. 
\end{proof}

First, we show the existence of KMS$_\beta$ states for all negative $\beta$ in this setting. 
Recall that a subset $T$ of $E^0$ is called \emph{saturated} if $v \in E^0$ satisfies $r(vE^1) \subseteq T$, then $v \in T$. For a hereditary subset $H$ of $E^0$, the smallest saturated subset of $E^0$ containing $H$ is called the \emph{saturation} of $H$. If $E$ contains no sinks, then the saturation of $H$ coincides with the set of all vertices in $E^0$ such that there exists $n \in \Nz$ with $r(vE^n) \subseteq H$ (see \cite[p.6]{aHLRS15}). The graph $E$ is called \emph{cofinal} if there are no saturated hereditary subsets of $E^0$ other than $E^0$ and $\emptyset$.

\begin{proposition} \label{prop:KMSexistence}
	For any $\beta <0$, there is at least one KMS$_\beta$ state on $\cT C^*(E)$ supported in $E^\infty$ such that the associated $e^\beta$-conformal measure has support containing the orbit $[x_E]$.
\end{proposition}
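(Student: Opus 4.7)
\textbf{Plan}: By Theorem~\ref{thm:Thomsen2}, it suffices to produce a normalised $\beta$-harmonic vector $\harm$ on $E$ with $\harm_{v_n} > 0$ for every $n \geq 0$: the corresponding $e^\beta$-conformal measure $m$ yields the desired KMS state $\varphi := m \circ \Phi$ (with $\Phi$ the canonical conditional expectation onto $C_0(E^{\leq \infty})$), and a direct computation using $\beta$-conformality together with positivity of $\harm$ on $W_{x_E}$ shows that every basic open neighborhood of every point of $[x_E]$ has positive $m$-measure, so $\supp(m) \supseteq [x_E]$. To construct such an $\harm$, I would apply Lemma~\ref{lem:harmonicvectorextension} with $S := V_{x_E}$; the required hypothesis on $S$ holds because prepending any edge $e$ with $r(e) \in V_{x_E}$ to a path from $r(e)$ to some $s(x_n)$ yields a path from $s(e)$ to $s(x_n)$.

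It therefore remains to construct a normalised $\beta$-harmonic vector on $E_S$, which I would do in two stages using the hypotheses that $E_S$ is row-finite, contains no nontrivial cycles, and $V_{x_E} \setminus W_{x_E}$ is finite. First, pick $N$ large enough that no edge of $E_S$ out of some $v_n$ with $n \geq N$ has range in $V_{x_E} \setminus W_{x_E}$; this is possible because row-finiteness together with finiteness of $V_{x_E}\setminus W_{x_E}$ bounds the number of edges from $W_{x_E}$ landing there. On the hereditary subgraph $\widetilde{E}_N$ with vertex set $\{v_n : n \geq N\}$, acyclicity of $E_S$ reduces the harmonic equation at $v_n$ to a forward recursion involving only $\harm_{v_m}$ with $m > n$; applying a Perron--Frobenius-style limit argument to finite truncations in the spirit of Thomsen \cite[\S 2]{Th17Adv}, I would extract a nonzero harmonic vector $\harm'$ on $\widetilde{E}_N$ with $\harm'_{v_n} > 0$ for every $n \geq N$ (positivity following from forward-connectedness via the edges $x_n, x_{n+1}, \ldots$). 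Second, the harmonic equations at the remaining finite set $\{v_0, \ldots, v_{N-1}\} \cup (V_{x_E}\setminus W_{x_E})$ form a triangular linear system (topologically ordered by the acyclicity of $E_S$) with right-hand side determined by $\harm'$; solving backward along this order gives non-negative values with strict positivity at each $v_n$ for $n < N$, and renormalisation combined with extension by zero via Lemma~\ref{lem:harmonicvectorextension} produces the desired $\harm$ on $E$.

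The principal obstacle is the existence argument in the first stage: while a pure linear path admits the explicit geometric-series solution $\harm_{v_n} = e^{n\beta}\harm_{v_0}$ for every $\beta < 0$, the general tail $\widetilde{E}_N$ can display arbitrary multi-edges and branchings subject only to row-finiteness, so producing a strictly positive $e^\beta$-eigenvector of the associated transfer operator requires a careful compactness-and-extraction argument. Acyclicity together with row-finiteness ensures sufficient weak-$*$ compactness of the space of candidate probability measures on $E^\infty$ to extract a harmonic cluster point, and once $\harm$ is in hand, Theorem~\ref{thm:Thomsen2} closes the argument.
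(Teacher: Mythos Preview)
Your overall architecture matches the paper's: reduce to $S=V_{x_E}$ via Lemma~\ref{lem:harmonicvectorextension}, construct a $\beta$-harmonic vector on $E_S$ that is strictly positive along $W_{x_E}$, then invoke Theorem~\ref{thm:Thomsen2}. The difference lies in how the harmonic vector on $E_S$ is produced, and your version leaves the hardest step as a sketch.

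The paper does not split into a tail plus a finite backward extension. Instead it applies Thomsen's existence result \cite[Theorem~4.2(a)]{Th17Chapter} directly to the whole of $E_S$, after checking that $E_S$ is cofinal, sink-free, has finite $|vE_S^nw|$, and is acyclic. Cofinality is the only nontrivial verification: one shows that any nonempty saturated hereditary $T\subseteq E_S^0$ contains a tail $H_n=\{v_n,v_{n+1},\dots\}$, and then that the saturation of $H_n$ is all of $E_S^0$ using assumptions (2) and (4). This single citation replaces your ``Perron--Frobenius-style limit argument to finite truncations,'' which you yourself flag as the principal obstacle and do not actually carry out. Your compactness sketch is plausible but is essentially re-deriving Thomsen's theorem; citing it is both shorter and rigorous.

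Two further points. First, you do not explicitly verify that the harmonic vector is summable, which is needed before normalising. The paper handles this with the one-line estimate $e^\beta\harm_{v_n}=\sum_w|v_nE^1w|\harm_w\geq \harm_{v_{n+1}}$, giving $\harm_{v_n}\leq e^{n\beta}\harm_{v_0}$ and hence $\sum_n\harm_{v_n}<\infty$; finiteness of $V_{x_E}\setminus W_{x_E}$ then finishes it. In your two-stage scheme the backward extension adds only finitely many terms, so this would also go through, but it should be stated. Second, your backward-extension stage is correct but unnecessary once you have Thomsen's theorem on all of $E_S$; the triangular solve and topological-order reasoning are extra work that the direct citation avoids.
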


\begin{proof}
    Fix $\beta<0$. Let $S=V_{x_E}$, and we apply \cite[Theorem 4.2 (a)]{Th17Chapter} to $E_S$. We need to check the following conditions in \cite[p.281 and p.288]{Th17Chapter}:
    \begin{enumerate}[\upshape(i)]
        \item The graph $E_S$ is cofinal. 
        \item The graph $E_S$ contains no sinks. 
        \item For any $v,w \in E_S$ and any $n\geq 1$, $|vE_S^nw|<\infty$. 
        \item For any $v \in E_S$, $vE_S^*v=\emptyset$. 
    \end{enumerate}
    The condition (ii) follows from the definition of $S=V_{x_E}$. The conditions (iii) and (iv) follow from the assumptions on $E$. We show that $E_S$ satisfies (i). Let $T$ be a nonempty saturated hereditary subset of $E_S^0$. Then, we have $H_n :=\{v_n,v_{n+1},\cdots\} \subseteq T$ for large $n$, since any vertex in $E_S^0$ is connected to $H_n$ for large $n$. We can see that the saturation of $H_n$ in $E_S$ is equal to $E^0_S$ by assumptions (2) and (4) on the graph $E$. Hence, $T=E^0$, which implies that $E_S$ is cofinal. Hence, there is a $\beta$-harmonic vector $\harm$ of $E_S$ by \cite[Theorem 4.2 (a)]{Th17Chapter}. 

    We show $\sum_{v \in S} \harm_v < \infty$, which implies that the $e^\beta$-conformal measure associated to $\harm$ is finite. By the assumption (4) on the graph $E$, it suffices to show $\sum_{n=0}^\infty \harm_{v_n} < \infty$. By Definition \ref{def:harmonicvector}, we have 
    \[ e^\beta \chi_{v_n} = \sum_{w \in S} |v_n E^1 w|\harm_w \geq \harm_{v_{n+1}} \]
    for any $n \in \Nz$, so that 
    $e^{n\beta}\harm_{v_0} \geq \harm_{v_n}$
    for any $n \in \Nz$. The claim follows from this inequation. 

    Hence, we can normalise the harmonic vector $\harm$. The normalisation of $\harm$ extends to a normalised harmonic vector of $E$ by Lemma \ref{lem:harmonicvectorextension}, which gives rise to a KMS$_\beta$ state of $\cT C^*(E)$ satisfying the desired properties. 
\end{proof}

We will see that in several concrete examples (Example \ref{exm:negative} and Example \ref{exm:negative2}), the $e^\beta$-conformal measure is unique, although uniqueness is not guaranteed in general.
We shall establish a criterion which determines whether the $e^\beta$-conformal measure is type I or not. 
An intuitive explanation of Theorem \ref{thm:summable} is as follows: if there is a path which leaves from the path $x$, then that path should return to $x$ very quickly. More precisely, if $\mu$ is a path consisting of edges other than the $x_i$'s, and if $s(\mu) = v_{n_0}, r(\mu)=v_{n_1}$, then $|\mu|$ should be much smaller than $n_1-n_0$. The finiteness condition in \eqref{eqn:summable} is a condition on the ``return speed'' of the paths starting from vertices in $x$.

We let $\cS$ denote the set of sources in $E_S$. 
\begin{theorem}
	\label{thm:summable}
	The orbit $[x_E]$ is $\beta$-summable if and only if 
	\begin{equation} 	
		\lim_{n \to \infty} \sum_{\mu \in \cS E^*v_n} e^{\beta(n-|\mu|)} < \infty,\label{eqn:summable}	
	\end{equation}
	where $\cS E^*v_n:=\{\mu\in E^* : s(\mu)\in \cS, r(\mu)=v_n\}$.
\end{theorem}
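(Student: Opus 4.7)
The plan is to decompose the total sum $\sum_{y\in[x_E]} l_{x_E}(y)^\beta$ according to the starting vertex of each orbit element $y$, and compare each piece to the partial sum appearing in \eqref{eqn:summable}. Because $E_S$ contains no nontrivial cycles, $x_E$ is not eventually periodic, so the integer $n-|\mu|$ depends only on $y$ whenever $y=\mu\, x_n x_{n+1}\cdots$ with $\mu\in E^*v_n$; this common value equals $\log l_{x_E}(y)$. For each $v\in V_{x_E}$ set
\[
S_n^v := \sum_{\mu\in vE^*v_n} e^{\beta(n-|\mu|)},
\]
so that $S_n = \sum_{w\in\cS} S_n^w$. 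The first key claim is that $\mu\mapsto \mu\, x_n x_{n+1}\cdots$ puts $vE^*v_n$ in bijection with $\{y\in[x_E] : s(y)=v,\ n(y)\leq n\}$, where $n(y)$ is the minimum level admitting a representation $y=\mu x_n x_{n+1}\cdots$. Monotone convergence then yields
\[
\lim_{n\to\infty} S_n^v \;=\; \sum_{y\in[x_E],\, s(y)=v} l_{x_E}(y)^\beta .
\]
Hypothesis (1) forces $\cS\cap W_{x_E}=\{v_0\}$, so by (4) the set $\cS$ is finite and $\lim_{n\to\infty} S_n = \sum_{y\in[x_E]^{\cS}} l_{x_E}(y)^\beta$, where $[x_E]^{\cS}$ is the set of orbit elements starting at a source. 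The forward direction is then immediate, since $[x_E]^{\cS}\subseteq[x_E]$.

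For the converse, I need to control the contribution of $y\in[x_E]$ not starting at a source. For each non-source $v\in V_{x_E}$ put $C_v := \sum_{\nu\in\cS E^*v} e^{-\beta|\nu|}$. The concatenation map
\[
\cS E^*v \times vE^*v_n \longrightarrow \cS E^*v_n,\qquad (\nu,\mu)\mapsto \nu\mu,
\]
is injective because (3) forces any element of $\cS E^*v_n$ to visit each vertex (in particular $v$) at most once, leaving a unique factorisation through $v$. Summing the weights on both sides gives $C_v\cdot S_n^v \leq S_n$, and hence $\lim_{n\to\infty} S_n^v \leq (\lim_{n\to\infty} S_n)/C_v$ whenever $C_v>0$. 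A combinatorial argument from (3) and (4) shows that every $v\in V_{x_E}$ admits a backward path to a source (backward edges into $v_m\in W_{x_E}$ originate either from $v_k$ with $k<m$ or from the finite set $F:=V_{x_E}\setminus W_{x_E}$, else a cycle would appear), so $C_v>0$ for all $v\in V_{x_E}$. Moreover, the prefix $x_0 x_1\cdots x_{n-1}\in\cS E^*v_n$ witnesses the crucial lower bound $C_{v_n}\geq e^{-\beta n}$.

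Putting the pieces together,
\[
\sum_{y\in[x_E]} l_{x_E}(y)^\beta \;=\; \sum_{v\in V_{x_E}} \lim_{n\to\infty} S_n^v \;\leq\; \lim_{n\to\infty} S_n \;+\; \sum_{v\in F\setminus\cS} \frac{\lim_{n\to\infty} S_n}{C_v} \;+\; \Big(\lim_{n\to\infty} S_n\Big)\sum_{k\geq 1} e^{\beta k},
\]
which is finite whenever $\lim_{n\to\infty} S_n<\infty$: the middle sum is finite because $F$ is finite by (4) and each $C_v>0$, and the last sum converges because $\beta<0$. This proves the converse and completes the equivalence. The main obstacle I anticipate is the bijective identification $\lim_{n\to\infty} S_n^v = \sum_{y,\, s(y)=v} l_{x_E}(y)^\beta$, which requires carefully selecting the ``minimal-$n$'' canonical representation of each orbit element and verifying that multiple representations of the same $y$ are correctly absorbed into the monotone limit; the injectivity of concatenation and the estimate $C_{v_n}\geq e^{-\beta n}$ are then straightforward combinatorial consequences of hypotheses (1)--(4).
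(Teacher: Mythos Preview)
Your argument is correct and takes a genuinely different route from the paper's. The paper first reduces to the case $\cS=\{v_0\}$ by identifying all sources, and then (in Lemma~\ref{lem:measure[x]}) builds the $\beta$-harmonic vector $\eta_v:=\lim_n S_n^v$, checks it defines a finite $e^\beta$-conformal measure $m$ on $E^\infty$ with $m(\{x_E\})\geq 1$, and finally invokes the general principle that a conformal probability measure concentrated on an orbit forces $\beta$-summability. You bypass both the source-identification and the measure-theoretic detour: the bijection $vE^*v_n\to\{y\in[x_E]:s(y)=v,\ n(y)\le n\}$ gives the direct identity $\lim_n S_n^v=\sum_{s(y)=v}l_{x_E}(y)^\beta$, and the injective concatenation $\cS E^*v\times vE^*v_n\hookrightarrow \cS E^*v_n$ yields $C_v\,S_n^v\le S_n$, after which the bound $C_{v_k}\ge e^{-\beta k}$ and finiteness of $F$ finish the job. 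Both arguments ultimately rest on the same geometric decay along $x_E$ (your $C_{v_k}\ge e^{-\beta k}$ is the paper's $\eta_{v_k}\le e^{\beta k}\eta_{v_0}$), but yours is more elementary, while the paper's yields the explicit harmonic vector $\eta$ as a byproduct and ties the result into Thomsen's framework.

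One point to tighten: your sketch that every $v\in V_{x_E}$ is reachable from a source (i.e.\ $C_v>0$) is correct but could be made cleaner. The set $B:=\{w\in V_{x_E}:wE_S^*v\ne\emptyset\}$ is closed under taking predecessors; condition~(3) combined with the existence of a path $v\to v_n$ forces $B\cap W_{x_E}$ to be finite (else one builds a cycle $v_m\to v\to v_n\to v_m$ for large $m$), so $B$ itself is finite by~(4). If no vertex of $B$ were a source, every vertex of the finite set $B$ would have in-degree $\ge 1$ within $B$, producing a cycle and contradicting~(3).
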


The ``only if'' direction of Theorem~\ref{thm:summable} is trivial since $\sum_{\mu \in \cS E^*v_n} e^{\beta(n-|\mu|)}<\sum_{z\in [x_E]}l_{x_E}(z)^\beta$ for every $n\geq 0$. For the ``if'' direction, we need some preliminaries.

The next result is inspired by \cite[Lemma 5.1]{Th17Adv} and Thomsen's notion of ``exit measure''. Given $x=x_0x_1...\in E^\infty$ and $k\geq 0$, we let $x[0,...,k]:=x_0x_1...x_k$.

\begin{lemma}
	\label{lem:measure[x]}
	In addition to the assumptions in Theorem~\ref{thm:summable}, assume that $\cS=\{v_0\}$. 
	Then the limit 
	\begin{equation} 
		\eta_v := \lim_{n\to\infty}e^{\beta n}\sum_{\mu \in vE^*v_n} e^{-\beta |\mu|} \label{eqn:m(v)}
	\end{equation} 
	converges for all $v \in E^0$, which gives rise to a $\beta$-harmonic vector $\eta$. 
	Let $m$ be the  $e^\beta$-conformal measure on $E^\infty$ associated to $\eta$. 
	Then $m$ is finite and $m([x_E])>0$.
\end{lemma}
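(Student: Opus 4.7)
The plan is to obtain $\eta$ as a bounded monotone limit and then read off both assertions about $m$ from growth estimates for $\eta$. Writing $G_n^v := \sum_{\mu \in vE^*v_n} e^{-\beta |\mu|}$ and $F_n^v := e^{\beta n} G_n^v$, one needs to control $\eta_v = \lim_n F_n^v$ and then derive from $\eta$ the finiteness of $m$ and the positivity of $m([x_E])$.

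First I would establish two key estimates for $F_n^v$. The injection $vE^*v_n \hookrightarrow vE^*v_{n+1}$ given by $\nu \mapsto \nu x_n$ multiplies weights by $e^{-\beta}$, which yields $G_{n+1}^v \geq e^{-\beta} G_n^v$ and hence that $F_n^v$ is non-decreasing in $n$. For an upper bound, since $v_0$ is the unique source of the acyclic row-finite graph $E_S$, every $v \in V_{x_E}$ is reachable from $v_0$ by some finite path $\mu_0 \in v_0 E^* v$, and the injection $\nu \mapsto \mu_0 \nu$ gives $F_n^v \leq e^{\beta|\mu_0|} F_n^{v_0}$; the hypothesis of Theorem~\ref{thm:summable} specialised to $\cS = \{v_0\}$ bounds $F_n^{v_0}$. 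Monotone convergence of bounded sequences then shows that $\eta_v$ exists in $[0,\infty)$ for all $v \in V_{x_E}$; for $v \notin V_{x_E}$ we simply have $G_n^v = 0$ so $\eta_v = 0$.

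Next I would verify the harmonic relation. Decomposing a path $\mu \in vE^*v_n$ either as the trivial path (possible only when $v = v_n$) or according to its first edge gives
\[
G_n^v = \mathbf{1}_{v = v_n} + e^{-\beta}\sum_{e \in vE^1} G_n^{r(e)}.
\]
The sum is effectively finite, since edges $e$ with $r(e) \notin V_{x_E}$ satisfy $G_n^{r(e)} = 0$ and $vE_S^1$ is finite by row-finiteness of $E_S$. Multiplying by $e^{\beta n}$ and passing to the limit (the term $\mathbf{1}_{v = v_n} e^{\beta n}$ vanishes as $n \to \infty$) yields $e^\beta \eta_v = \sum_w |vE^1 w|\,\eta_w$, the desired $\beta$-harmonic relation.

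Finally I would deduce the two properties of $m$. Applying the upper bound with $v = v_k$ and $\mu_0 := x_E[0,\ldots,k-1]$ gives $\eta_{v_k} \leq e^{\beta k}\eta_{v_0}$; since $\beta < 0$ the geometric sum over $k$ is finite, and combined with the (finite) contribution from $V_{x_E} \setminus W_{x_E}$ we obtain $m(E^\infty) = \sum_v \eta_v < \infty$. For positivity, the single path $x_n x_{n+1} \cdots x_{k-1}$ already shows $G_k^{v_n} \geq e^{-\beta(k-n)}$, so $F_k^{v_n} \geq e^{\beta n}$ and $\eta_{v_n} \geq e^{\beta n}$. Hence
\[
m\bigl(\cZ'(x_E[0,\ldots,n-1])\bigr) = e^{-\beta n}\eta_{v_n} \geq 1
\]
for every $n$, and continuity of the finite measure $m$ from above along the nested intersection $\{x_E\} = \bigcap_n \cZ'(x_E[0,\ldots,n-1])$ gives $m(\{x_E\}) \geq 1$, whence $m([x_E]) \geq m(\{x_E\}) > 0$. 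The main point requiring care is the harmonicity step: even though $vE^1$ may be infinite in the ambient graph $E$ (only $E_S$ is assumed row-finite), only edges into $V_{x_E}$ contribute, so one must justify that the complement of $V_{x_E}$ contributes nothing to the harmonic equation---a routine but essential bookkeeping check.
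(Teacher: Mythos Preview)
Your proof is correct and follows the same approach as the paper: monotonicity plus the hypothesis \eqref{eqn:summable} for convergence, first-edge decomposition for harmonicity, and the two-sided estimate $e^{\beta n}\le \eta_{v_n}\le e^{\beta n}\eta_{v_0}$ for the properties of $m$. You are in fact more explicit than the paper in two places---bounding $F_n^v$ for general $v$ via the prefix injection $\nu\mapsto\mu_0\nu$ (which is legitimate because $\cS=\{v_0\}$, acyclicity of $E_S$, and finiteness of $V_{x_E}\setminus W_{x_E}$ together force every $v\in V_{x_E}$ to be reachable from $v_0$), and justifying that only finitely many edges contribute to the harmonic sum---whereas the paper leaves both points implicit.
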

\begin{proof}
	For every $v \in E^0$, the sequence $\displaystyle e^{\beta n}\sum_{\mu \in vE^*v_n} e^{-\beta |\mu|}$ is eventually increasing:  We have
	\[\begin{aligned}		
		e^{\beta (n+1)}\sum_{\mu \in vE^*v_{n+1}}e^{-\beta |\mu|}
		\geq e^{\beta(n+1)}\sum_{\mu \in vE^*v_n}\sum_{\nu \in v_nE^*v_{n+1}} e^{-\beta |\mu\nu|}
		&\geq e^{\beta(n+1)}\sum_{\mu \in vE^*v_n}e^{-\beta (|\mu|+1)}\\
		&= e^{\beta n}\sum_{\mu \in vE^*v_n}e^{-\beta |\mu|}.
	\end{aligned}\]

	Thus the limit in \eqref{eqn:m(v)} converges by assumption \eqref{eqn:summable}. 
	
	Observe that $\eta_v=0$ whenever $v\not\in V_{x_E}$, so it is easy to see that the equation in Definition \ref{def:harmonicvector} holds for all $v\not\in V_{x_E}$. For $v\in V_{x_E}$, we have
	\[
	\begin{aligned}
		e^{-\beta} \sum_{w \in E^0}|vE^1w| \eta_w 
		= \sum_{e\in vE^1}e^{-\beta}\eta_{r(e)}
		&=\lim_{n \to\infty}e^{\beta n}\sum_{e\in vE^1}\sum_{\nu\in r(e)E^* v_n}e^{-\beta(|\nu|+1)} \\
		&=\lim_{n \to\infty}e^{\beta n}\sum_{\mu\in vE^* v_n}e^{-\beta|\mu|}
		=\eta_v,
	\end{aligned}
	\]
\flushleft where the third equality uses that $v \neq v_n$ for sufficiently large $n$. Hence, $\eta$ is a $\beta$-harmonic vector.
	
	For each fixed natural number $n$, we have
	\[ \eta_{v_n} =e^{-\beta} \sum_{w \in E^0} |v_{n} E^1 w| \eta_w \geq 
	e^{-\beta}|v_n E^1 v_{n+1}| \eta_{v_{n+1}} \geq e^{-\beta} \eta_{v_{n+1}} \] 
	and hence we have $\eta_{v_0} \geq e^{-\beta n} \eta_{v_n}$ for all $n$. Therefore, 
	$\sum_{n=0}^\infty \eta_{v_n} < \infty$. 
	So, the associated $e^\beta$-conformal measure $m$ is finite because of the finiteness of $V_{x_E} \setminus W_{x_E}$. 
	
	Finally, since there is a path from $v_n$ to $v_m$ of length $m-n$ if $m>n$, 
	we have 
	\[ \eta_{v_n} = \lim_{m \to \infty} e^{\beta m}\sum_{\mu\in v_nE^*v_m} e^{-\beta |\mu|}  \geq e^{\beta n}\]
	for every $n$.
	Since $\{x_E\}=\bigcap_{n=0}^\infty\cZ'(x_E[0,...,n-1])$, we have 
	\[
	m(\{x_E\})=\lim_{n\to\infty}m(\cZ'(x_E[0,...,n-1]))=\lim_{n\to\infty}e^{-\beta n}m(\cZ'(v_n))\geq 1,
	\] 
	so that $m([x_E])>0$. 
\end{proof}

\begin{proof}[Proof of Theorem~\ref{thm:summable}]
	We first argue that it is enough to consider the case where $\cS=\{v_0\}$. We construct a new graph $E'$ by identifying $v_0$ and all $v \in \cS \setminus \{v_0\}$ such that $vE^*v_n \neq \emptyset$ for some $n$. 
	Let $x_{E'} := f_0f_1\cdots$ in $E'$. Then, $[x_E]$ is $\beta$-summable in $E$ if and only if $[x_{E'}]$ is $\beta$-summable in $E'$, 
	since the summation in the definition of $\beta$-summability is the same in both graphs. Similarly, condition (\ref{eqn:summable}) holds in $E$ if and only if it holds in $E'$. Here, we used that $V_{x_E}\setminus W_{x_E}$ is finite.
	
	Now assume that $\cS=\{v_0\}$. Let $m$ be the finite $e^\beta$-conformal measure from Lemma~\ref{lem:measure[x]}. Then the normalisation of the restriction of $m$ to $[x_E]$ is an $e^\beta$-conformal probability measure concentrated on $[x_E]$, so that $[x_E]$ is $\beta$-summable.
\end{proof}

\begin{example}
	\label{exm:negative}	
	For each sequence $a = (a_i)_{i=0}^\infty$ of non-negative integers, let $E_a$ be the graph depicted by
	
	\begin{tikzcd}[
		arrow style=tikz,
		>={latex} 
		]	
		w_0=v_0 \arrow[rrr,"e_0", bend left=40] \arrow[r,"f_0"]  &  v_1\arrow[r,"f_2"]  & \cdots \arrow[r,"f_{a_0}"]& w_1=v_{a_0+1}\arrow[rr,"e_1", bend left=40]\arrow[r,"f_{a_0+1}"]  & \cdots\arrow[r,"f_{a_0+a_1+1}"]&  w_2=v_{a_0+a_1+2}  \arrow[r,"f_{a_0+a_1+2}"] \arrow[rr,"e_2", bend left=40]&\cdots \arrow[r] &\cdots& 
	\end{tikzcd}
	
	which has vertex set $E^0_a=\{v_n : n\geq 0\}$ and edge set $E^1_a=\{e_n,f_n: n\geq 0\}$. Consider the infinite path $x_E=f_0f_1f_2...\in E_a^\infty$.

	Intuitively, $[x_E]$ is $\beta$-summable if and only if the growth speed of $a_i$ is appropriately fast. 
	We claim that for any  $\beta\in\Rz$ and $a = (a_i)_{i=0}^\infty$, the orbit $[x_E]$ is $\beta$-summable in $E_a$ if and only if the product $\prod_{n=0}^{\infty}(1+e^{a_n\beta})^{-1}$ converges to a positive real number.
	
	Let $E=E_a$. Fix a natural number $n$. 
	For a subset $I \subseteq \{0,\cdots,n-1\}$, let $t(I) = \sum_{i\in I} a_i +n$. Let $m=t(\{0,\cdots,n-1\})$. 
	Then, $w_n=v_m$. A path $\mu \in v_0E^*v_m$ is determined by choosing, at each step, either an $e_i$ or the composition of $f_i$'s between $w_n$ and $w_{n+1}$. 
	Hence, there is a bijection between $v_0E^*v_m$ and the power set of $\{0,\cdots,n-1\}$, and the path corresponding to $I$ has length $t(I^c)$. 
	Therefore, 
	\[
	\sum_{\mu \in v_0 E^* v_m} e^{\beta(m-|\mu|)} = \sum_{I} e^{\beta(m-t(I^c))} = \sum_I e^{\beta(\sum_{i \in I}a_i)} = \sum_I \prod_{i \in I} e^{a_i\beta} = \prod_{j=0}^{n-1} (1+e^{a_j\beta}). 
	\]
	Hence, we conclude that $\displaystyle \lim_{m \to \infty} \sum_{\mu \in v_0 E^* v_m} e^{\beta(m-|\mu|)} < \infty$ if and only if 
	$\displaystyle \prod_{n=0}^{\infty} \frac{1}{1+e^{a_n\beta}} > 0$, which completes the proof by Theorem~\ref{thm:summable}.  
	
	Moreover, we can see the uniqueness of the KMS$_\beta$ state of $\T C^*(E)$ supported in $E^\infty$ for each $\beta <0$. Let $\harm$ be a $\beta$-harmonic vector. 
	Then, $\harm_{w_i}$ is uniquely determined from $\harm_{w_0}$ since $\harm_{w_i} = (e^{-\beta} + e^{-a_i \beta}) \harm_{w_{i+1}}$. 
	If $v_i \not\in \{w_j \mid j=0,1,2,\cdots \}$, then $\harm_{v_i}$ is determined from $w_j$'s by the equation $\harm_{v_i} = e^{-\beta}\harm_{v_{i+1}}$. In addition, the value of $\harm_{w_0}$ is determined from the normalisation condition $\sum_{i=0}^\infty \harm_{v_i} = 1$. Therefore, the $\beta$-harmonic vector is unique.
	
	In conclusion, there is a unique KMS$_\beta$ state on $\cT C^*(E)$ for any $\beta <0$ (see Proposition \ref{prop:KMSexistence}), and the unique KMS$_\beta$ state is of type I if and only if the infinite product $\prod_{n=0}^{\infty}(1+e^{a_n\beta})^{-1}$ converges to a positive real number. 
\end{example}

\begin{example}
	\label{exm:negative2}
	Let $E$ be the directed graph defined by $E^0 = \{v_0, v_1, \cdots \}$ and $E^1=\{e_0,e_1,...\}\cup\{f_0,f_1,...\}$, where $s(f_n)=v_n$, $r(f_n)=v_{n+1}$, $s(e_n)=v_n$, and $r(e_n)=v_{n+2}$ for all $n\geq 0$. Note that $E$ can be depicted as
	
	\begin{center}
		\begin{tikzcd}[
			arrow style=tikz,
			>={latex} 
			]
			& v_0 \arrow[rr,"e_0", bend left=40] \arrow[r,"f_0"]  &  v_1\arrow[rr,"e_1", bend left=40]\arrow[r,"f_1"]  & v_2\arrow[r,"f_{2}"]  & \cdots\arrow[r]&\cdots .
		\end{tikzcd}
	\end{center}
	
	Let $x_E=f_0f_1f_2...\in E^\infty$. Intuitively, this graph has ``bounded returning speed", which implies non-$\beta$-summability. In order to show that $[x_E]$ is not $\beta$-summable, let $\tilde{E}$ be the graph obtained by removing $e_n$ for odd $n$ from $E$. Then $\tilde{E} = E_a$, where 
	$a = (a_i)_{i=0}^\infty$ with $a_i = 2$ for all $i$ and $E_a$ is the graph in Example \ref{exm:negative}. Since $x_{\tilde{E}} = x_E$ is not $\beta$-summable in $E_a$, we have
	\[ \sum_{\mu \in \cS E^*v_n} e^{\beta(n-|\mu|)} \geq \sum_{\mu \in \cS \tilde{E}^*v_n} e^{\beta(n-|\mu|)} \longrightarrow \infty, \]
	which implies that $x_E$ is not $\beta$-summable in $E$. 
	
	From Proposition \ref{prop:KMSexistence}, there is at least one normalised $\beta$-harmonic vector for $E$. We will show that this is the unique normalised $\beta$-harmonic vector for $E$. Consequently, the Toeplitz algebra of $E$ has a unique KMS$_\beta$ state supported in $E^\infty$ for each $\beta < 0$. 
	
	Let $\harm$ be a $\beta$-harmonic vector, and let $\harm_n :=\harm_{v_n}$. Then, $\harm$ satisfies the recurrence relation
	\[ \harm_n = e^{-\beta} (\harm_{n+1} + \harm_{n+2}). \]
	Hence, there exist constants $A,B \in \Rz$ such that $\harm_n = A\lambda_1^n + B \lambda_2^n$, where 
	\[ \lambda_1 = \frac{-1 + \sqrt{1+4e^\beta}}{2},\ \lambda_2 = \frac{-1 - \sqrt{1+4e^\beta}}{2}. \]
	Since $|\lambda_2|>|\lambda_1|$ and $\harm_n \geq 0$ for all $n$, we can see that $B=0$. 
	Since $\displaystyle 0< \lambda_1 < \frac{-1+\sqrt{5}}{2} < 1$, 
	we have $\displaystyle \sum_{n=0}^\infty A \lambda_1^n = \frac{A}{1-\lambda_1}$. Hence $A=1-\lambda_1$, which completes our proof of uniqueness.
\end{example}

\appendix
\section{Factoriality of extremal KMS and ground states}

Here, we make some observations about factoriality and purity of KMS and ground states of C*-dynamical systems of possibly non-unital C*-algebras. These observations are likely well-know to experts, but we collect them here for the reader's convenience.

\begin{lemma}
 \label{lem:extremal}
Let $(A,\sigma)$ be a C*-dynamical system and $\beta\in\Rz^*$.
    \begin{enumerate}[\upshape(i)]
        \item A state $\varphi\in\KMS_\beta(A,\sigma)$ is extremal if and only if it is a factor state.
        \item A state $\varphi\in\Gr(A,\sigma)$ is extremal if and only if it is a pure state.
    \end{enumerate}
\end{lemma}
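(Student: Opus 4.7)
The plan is to reduce both assertions to the classical unital case by passing to the minimal unitization. Let $\tilde{A}$ denote the minimal unitization of $A$ (with $\tilde{A} = A$ if $A$ is already unital). Extend $\sigma$ uniquely to a time evolution $\tilde{\sigma}$ on $\tilde{A}$ by declaring the unit fixed, and for any state $\varphi$ on $A$ write $\tilde{\varphi}$ for the canonical state extension to $\tilde{A}$, namely $\tilde{\varphi}(a + \lambda 1) := \varphi(a) + \lambda$ for $a \in A$ and $\lambda \in \Cz$.

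I would then establish three bookkeeping facts. First, $\varphi$ satisfies the $\sigma$-KMS$_\beta$ condition (respectively the $\sigma$-ground condition) if and only if $\tilde{\varphi}$ satisfies the $\tilde{\sigma}$-KMS$_\beta$ (respectively $\tilde{\sigma}$-ground) condition; this follows from the fact that every $\tilde{\sigma}$-analytic element of $\tilde{A}$ has the form $a + \lambda 1$ with $a \in A$ being $\sigma$-analytic, together with the trivial linearity of each defining condition in the scalar part. Second, the extension map $\varphi \mapsto \tilde{\varphi}$ preserves extreme points of the corresponding convex sets: if $\tilde{\varphi} = t\psi_1 + (1-t)\psi_2$ with $\psi_i$ a $\tilde{\sigma}$-KMS$_\beta$ (or ground) state of $\tilde{A}$, then restricting to $A$ and comparing norms forces $\|\psi_i|_A\| = 1$, so each $\psi_i$ coincides with the canonical extension of $\psi_i|_A$, and extremality of $\varphi$ upstairs or downstairs become equivalent. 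Third, under the natural identification of GNS Hilbert spaces one has $\pi_{\tilde{\varphi}}(a + \lambda 1) = \pi_\varphi(a) + \lambda I$, whence $\pi_{\tilde{\varphi}}(\tilde{A})'' = \pi_\varphi(A)''$; in particular, $\varphi$ is a factor state (respectively a pure state) if and only if $\tilde{\varphi}$ is.

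Having reduced to the unital case, part~(i) is the classical characterisation of extremal KMS states as factor KMS states, see \cite[Theorem~5.3.30]{BR}, after rescaling $\sigma$ by $\beta^{-1}$ to match the $\beta = 1$ convention used there. Part~(ii) follows similarly from the structural analysis of ground states in \cite[Section~5.3.3]{BR}: in the unital setting an extremal ground state has irreducible GNS representation and is therefore pure, while conversely a pure state that happens to be a ground state is certainly extremal in the face of ground states. I do not anticipate any serious obstacle; the only point requiring care is the second bookkeeping fact above, namely ensuring that no $\tilde{\sigma}$-KMS or ground state of $\tilde{A}$ appearing in a convex decomposition of $\tilde{\varphi}$ charges the point at infinity, which as noted is forced by the assumption that $\varphi$ itself is a state on $A$.
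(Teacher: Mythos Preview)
Your proposal is correct and follows essentially the same strategy as the paper's proof: reduce to the unital case via the minimal unitization, verify that the KMS/ground conditions, extremality, and factoriality/purity all transfer along $\varphi \mapsto \tilde{\varphi}$, and then invoke \cite[Theorems~5.3.30 and~5.3.37]{BR}. The only cosmetic difference is that the paper packages your direct norm argument for extremality preservation as the statement that the image of $S(A)$ in $S(\tilde{A})$ is a face, citing \cite[Chapter~III,~Proposition~6.27]{Tak}.
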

\begin{proof}
Let $\tilde{A}$ denote the unitisaton of $A$, and let $\tilde{\sigma}$ denote the unique extension of $\sigma$ to a time evolution on $\tilde{A}$. Let $S(A)$ and $S(\tilde{A})$ denote the state spaces of $A$ and $\tilde{A}$, respectively. Each $\phi\in  S(A)$ has a unique extension to a state $\tilde{\phi}\in S(\tilde{A})$ such that $\tilde{\phi}(z 1+a)=z+\phi(a)$ for all $z\in\Cz$ and $a\in A$. A short argument shows that if $\phi$ is in $\KMS_\beta(A,\sigma)$ (resp. $\Gr(A,\sigma)$), then $\tilde{\phi}$ is in $\KMS_\beta(\tilde{A},\tilde{\sigma})$ (resp. $\Gr(\tilde{A},\tilde{\sigma})$).

The image $\cF$ of $S(A)$ in $S(\tilde{A})$ is a face (cf. \cite[Chapter~III,~Proposition~6.27]{Tak}), so that for every convex set $C\subseteq S(\tilde{A})$, the intersection $\cF\cap C$ is either empty or it is a face of $C$. Hence, by taking $C=\KMS_\beta(\tilde{A},\tilde{\sigma})$ (resp. $C=\Gr(\tilde{A},\tilde{\sigma})$), we see that $\phi$ is extremal in $\KMS_\beta(A,\sigma)$ (resp. $\Gr(A,\sigma)$) if and only if $\tilde{\phi}$ is extremal in $\KMS_\beta(\tilde{A},\tilde{\sigma})$ (resp. $\Gr(\tilde{A},\tilde{\sigma})$). By taking $C=S(\tilde{A})$, we see that a state $\phi\in S(A)$ is pure if and only if $\tilde{\phi}\in S(\tilde{A})$ is pure, so (ii) follows from \cite[Theorem~5.3.37]{BR}.

The GNS-triple for $\phi$ is the restriction of the GNS-triple for $\tilde{\phi}$, so $\pi_\phi(A)''\cong\pi_{\tilde{\phi}}(\tilde{A})''$. Thus, (i) follows from \cite[Theorem~5.3.30]{BR}.
\end{proof}

\begin{remark}
The analogue of Lemma~\ref{lem:extremal} fails for KMS$_0$ states. For instance, the Toeplitz algebra $\cT$ equipped with the gauge action has a unique KMS$_0$ state, which is given by composing the canonical surjection $\cT\to C(\Tz)$ with the state coming from the (normalised) Haar measure on $\Tz$. However, this state is not a factor state.
\end{remark}

\end{document}